\setlist[enumerate]{
    leftmargin=2em,
    label=(\roman*)
}
\setlist[itemize]{
    align=parleft,
    left=0pt..1em,
    itemsep=2pt,
    topsep=4pt
}
\newtheorem{theorem}{Theorem}[section]
\newtheorem*{theorem*}{Theorem}
\newtheorem{proposition}[theorem]{Proposition}
\newtheorem{lemma}[theorem]{Lemma}
\newtheorem{corollary}[theorem]{Corollary}
\newtheorem{theoremx}{Theorem}
\theoremstyle{definition}
\newtheorem{example}[theorem]{Example}
\newtheorem{remark}[theorem]{Remark}
\newtheorem{definition}[theorem]{Definition}
\newcommand{\ZZ}{\mathbb{Z}}
\newcommand{\QQ}{\mathbb{Q}}
\newcommand{\RR}{\mathbb{R}}
\newcommand{\CC}{\mathbb{C}}
\newcommand{\id}{\textup{id}}
\newcommand{\uQQ}{\underline{\QQ}}
\newcommand{\KK}{\mathbb{K}}
\newcommand{\uKK}{\underline{\KK}}
\newcommand{\cat}[1]{\textup{#1}}
\newcommand{\Mod}{\cat{Mod}}
\newcommand{\Var}{\cat{Var}}
\newcommand{\Stck}{\cat{Stck}}
\newcommand{\TopStck}{\cat{LCHStck}}
\newcommand{\Bord}{\cat{Bord}}
\newcommand{\LCH}{\cat{LCH}}
\newcommand{\Cat}{\cat{Cat}}
\newcommand{\K}{\textup{K}_0}
\newcommand{\GL}{\textup{GL}}
\newcommand{\SL}{\textup{SL}}
\newcommand{\SU}{\textup{SU}}
\newcommand{\PU}{\textup{PU}}
\newcommand{\U}{\textup{U}}
\newcommand{\SO}{\textup{SO}}
\newcommand{\PGL}{\textup{PGL}}
\newcommand{\defequality}{\coloneqq}
\DeclareMathOperator{\Hom}{Hom}
\newcommand{\iHom}{\mathscr{H}\!\textup{om}}
\DeclareMathOperator{\Ext}{Ext}
\DeclareMathOperator{\Corr}{Corr}
\DeclareMathOperator{\Sh}{Sh}
\DeclareMathOperator{\Cone}{Cone}
\newcommand{\Rep}[2]{R_{#1}({#2})}
\newcommand{\CharStck}[2]{\mathfrak{X}_{#1}({#2})}
\newcommand{\RepTw}[3]{R^\textup{tw}_{#1}(#2; #3)}
\newcommand{\CharStckTw}[3]{\mathfrak{X}^\textup{tw}_{#1}(#2; #3)}
\newcommand{\smatrix}[1]{\left(\begin{smallmatrix} #1 \end{smallmatrix}\right)}
\renewenvironment{cases}{%
\left\{\begin{array}{cl}%
}{\end{array}\right.}
\newcommand{\bdscale}{0.5}
\newcommand{\bdconnect}{\hspace{-0.225cm}}
\def\bdcircle {
\begin{tikzpicture}[semithick, scale=\bdscale, baseline=-0.5ex]
\begin{scope}
    \draw (0,0) ellipse (0.2cm and 0.4cm);
\end{scope}
\end{tikzpicture}
}
\newcommand{\bdidentity}[1][1] {
\begin{tikzpicture}[semithick, scale=#1*\bdscale, baseline=-0.5ex]
\begin{scope}
    \draw (-1,0) ellipse (0.2cm and 0.4cm);
    \draw (-1,0.4) -- (1,0.4);
    \draw (-1,-0.4) -- (1,-0.4);
    \draw (1,0) ellipse (0.2cm and 0.4cm);
\end{scope}
\end{tikzpicture}
}
\newcommand{\bdinverse}[1][1] {
\begin{tikzpicture}[semithick, scale=#1*1.2*\bdscale, baseline=-0.5ex]
\begin{scope}
    \draw (-0.7,0) ellipse (0.2cm and 0.4cm);
    \draw (-0.7,0.4) .. controls (0, 0.4) and (0, -0.4) .. (0.7, -0.4);
    \draw (-0.7,-0.4) .. controls (0, -0.4) and (0, 0.4) .. (0.7, 0.4);
    \draw (0.7, 0) ellipse (0.2cm and 0.4cm);
\end{scope}
\end{tikzpicture}
}
\newcommand{\bdunit}[1][1]{
\begin{tikzpicture}[semithick, scale=#1*\bdscale, baseline=-0.5ex]
\begin{scope}
    \draw (0,0) ellipse (0.2cm and 0.4cm);
    \draw (0,-0.4) arc (-90:90:0.75cm and 0.4cm);
\end{scope}
\end{tikzpicture}
}
\newcommand{\bdcounit}[1][1]{
\begin{tikzpicture}[semithick, scale=#1*\bdscale, baseline=-0.5ex]
\begin{scope}
    \draw (0,0) ellipse (0.2cm and 0.4cm);
    \draw (0,0.4) arc (90:270:0.75cm and 0.4cm);
\end{scope}
\end{tikzpicture}
}
\newcommand{\bdgamma}[1][1] {
\begin{tikzpicture}[semithick, scale=#1*\bdscale, baseline=-0.5ex]
\begin{scope}
    \draw (0,0.5) ellipse (0.2cm and 0.4cm);
    \draw (0,-0.5) ellipse (0.2cm and 0.4cm);
    \draw (0,0.9) arc (90:-90:1.1cm and 0.9cm);
    \draw (0,0.1) arc (90:-90:0.3cm and 0.1cm);
\end{scope}
\end{tikzpicture}
}
\newcommand{\bdbeta}[1][1] {
\begin{tikzpicture}[semithick, scale=#1*\bdscale, baseline=-0.5ex]
\begin{scope}
    \draw (0,0.5) ellipse (0.2cm and 0.4cm);
    \draw (0,-0.5) ellipse (0.2cm and 0.4cm);
    \draw (0,0.9) arc (90:270:1.1cm and 0.9cm);
    \draw (0,0.1) arc (90:270:0.3cm and 0.1cm);
\end{scope}
\end{tikzpicture}
}
\newcommand{\bdgenus}[1][1]{
\begin{tikzpicture}[semithick, scale=#1*\bdscale, baseline=-0.5ex]
\begin{scope}
    \draw (-1,0) ellipse (0.2cm and 0.4cm);
    \draw (-1,0.4) .. controls (-0.5,0.6) and (0.5,0.6) .. (1,0.4);
    \draw (-1,-0.4) .. controls (-0.5,-0.6) and (0.5,-0.6) .. (1,-0.4);
    \draw (-0.5,0.1) .. controls (-0.5,-0.125) and (0.5,-0.125) .. (0.5,0.1);
    \draw (-0.4,0.0) .. controls (-0.4,0.0625) and (0.4,0.0625) .. (0.4,0.0);
    \draw (1,0) ellipse (0.2cm and 0.4cm);
\end{scope}
\end{tikzpicture}
}
\newcommand{\bdcomultiplication}[1][1]{
\begin{tikzpicture}[semithick, scale=#1*\bdscale, baseline=-0.5ex]
\begin{scope}
    \useasboundingbox (-1.3,-.75) rectangle (1.2,.75);
    \draw (-1,0.5) ellipse (0.2cm and 0.4cm);
    \draw (-1,-0.5) ellipse (0.2cm and 0.4cm);
    \draw (1,0) ellipse (0.2cm and 0.4cm);
    \draw (-1,0.9) .. controls (0,0.9) and (0,0.4) .. (1,0.4);
    \draw (-1,-0.9) .. controls (0,-0.9) and (0,-0.4) .. (1,-0.4);
    \draw (-1,0.1) .. controls (-0.2,0.1) and (-0.2,-0.1) .. (-1,-0.1);
\end{scope}
\end{tikzpicture}
}
\newcommand{\bdmultiplication}[1][1]{
\begin{tikzpicture}[semithick, scale=#1*\bdscale, baseline=-0.5ex]
\begin{scope}
    \useasboundingbox (-1.2,-.75) rectangle (1.3,.75);
    \draw (-1,0) ellipse (0.2cm and 0.4cm);
    \draw (1,0.5) ellipse (0.2cm and 0.4cm);
    \draw (1,-0.5) ellipse (0.2cm and 0.4cm);
    \draw (-1,0.4) .. controls (0,0.4) and (0,0.9) .. (1,0.9);
    \draw (-1,-0.4) .. controls (0,-0.4) and (0,-0.9) .. (1,-0.9);
    \draw (1,0.1) .. controls (0.2,0.1) and (0.2,-0.1) .. (1,-0.1);
\end{scope}
\end{tikzpicture}
}
\newcommand{\bdtwist}[1][1] {
\begin{tikzpicture}[semithick, scale=#1*\bdscale, baseline=-0.5ex]
\begin{scope}
    \draw (-1,0.5) ellipse (0.2cm and 0.4cm);
    \draw (-1,-0.5) ellipse (0.2cm and 0.4cm);
    \draw (1,0.5) ellipse (0.2cm and 0.4cm);
    \draw (1,-0.5) ellipse (0.2cm and 0.4cm);
    \draw (-1,0.9) .. controls (0,0.9) and (0,-0.1) .. (1,-0.1);
    \draw (-1,0.1) .. controls (0,0.1) and (0,-0.9) .. (1,-0.9);
    \draw (-1,-0.9) .. controls (0,-0.9) and (0,0.1) .. (1,0.1);
    \draw (-1,-0.1) .. controls (0,-0.1) and (0,0.9) .. (1,0.9);
\end{scope}
\end{tikzpicture}
}
\newcommand{\bdprojplane}[1][1] {
\begin{tikzpicture}[semithick, scale=#1*0.6*\bdscale, baseline=-0.5ex]
\begin{scope}
    \fill[white] (-2.3,-1) rectangle (2.3,1); 
    \filldraw[fill=white, draw=black] (-2,-1) rectangle (2,1);
    \filldraw[fill=white, draw=black] (-1,0) circle (0.5);
    \filldraw[fill=white, draw=black] (1,0) circle (0.5);
    \draw [-{to[width=2mm,length=3mm]}] (0, 1) to ++(0.1, 0);
    \draw [-{to[width=2mm,length=3mm]}] (0, -1) to ++(-0.1, 0);
    \draw [-{to[width=2mm,length=3mm]}] (2, 0) to ++(0, -0.1);
    \draw [-{to[width=2mm,length=3mm]}] (-2, 0) to ++(0, 0.1);
\end{scope}
\end{tikzpicture}
}
\def\bdgenerictube#1 {
\begin{tikzpicture}[semithick, scale=\bdscale, baseline=-0.5ex]
\begin{scope}
    \draw (-1,0) ellipse (0.2cm and 0.4cm);
    \draw (-1,0.4) cos (-0.875, 0.5) sin (-0.75,0.6) cos (-0.625,0.5) sin (-0.5,0.4) cos (-0.375,0.5) sin (-0.25, 0.6) cos (-0.125,0.5) sin (0, 0.4) cos (0.125,0.5) sin (0.25,0.6) cos (0.375,0.5) sin (0.5,0.4) cos (0.625,0.5) sin (0.75,0.6) cos (0.875,0.5) sin (1,0.4);
    \draw (-1,-0.4) cos (-0.875, -0.5) sin (-0.75,-0.6) cos (-0.625,-0.5) sin (-0.5,-0.4) cos (-0.375,-0.5) sin (-0.25, -0.6) cos (-0.125,-0.5) sin (0,-0.4) cos (0.125,-0.5) sin (0.25,-0.6) cos (0.375,-0.5) sin (0.5,-0.4) cos (0.625,-0.5) sin (0.75,-0.6) cos (0.875,-0.5) sin (1,-0.4);
    \draw (1,0) ellipse (0.2cm and 0.4cm);
    \draw (0,0) node {$#1$};
\end{scope}
\end{tikzpicture}
}
\newcommand{\bdpgenuscappedright}[1][\bdscale] {
\begin{tikzpicture}[semithick, scale=#1, baseline=-0.5ex]
\begin{scope}
    \draw (-0.5,0.1) .. controls (-0.5,-0.125) and (0.5,-0.125) .. (0.5,0.1);
    \draw (-0.4,0.0) .. controls (-0.4,0.0625) and (0.4,0.0625) .. (0.4,0.0);
    \draw (1,0) ellipse (0.2cm and 0.4cm);
    \draw (1,0.4) -- (-0.3, 0.4);
    \draw (-0.3,0.4) arc (90:270:0.75cm and 0.4cm);
    \draw (-0.3, -0.4) -- (1,-0.4);
    \draw[black,fill=black] (0.8,0) circle (.4ex);
\end{scope}
\end{tikzpicture}
}
\newcommand{\bdpgenuscappedleft}[1][\bdscale] {
\begin{tikzpicture}[semithick, scale=#1, baseline=-0.5ex]
\begin{scope}
    \draw (-0.5,0.1) .. controls (-0.5,-0.125) and (0.5,-0.125) .. (0.5,0.1);
    \draw (-0.4,0.0) .. controls (-0.4,0.0625) and (0.4,0.0625) .. (0.4,0.0);
    \draw (-1,0) ellipse (0.2cm and 0.4cm);
    \draw (-1,0.4) -- (0.3, 0.4);
    \draw (0.3,0.4) arc (270:90:-0.75cm and -0.4cm);
    \draw (0.3, -0.4) -- (-1,-0.4);
    \draw[black,fill=black] (-0.8,0) circle (.4ex);
\end{scope}
\end{tikzpicture}
}
\newcommand{\bdppgenuscappedleft}[1][\bdscale] {
\begin{tikzpicture}[semithick, scale=#1, baseline=-0.5ex]
\begin{scope}
    \draw (-0.5,0.1) .. controls (-0.5,-0.125) and (0.5,-0.125) .. (0.5,0.1);
    \draw (-0.4,0.0) .. controls (-0.4,0.0625) and (0.4,0.0625) .. (0.4,0.0);
    \draw (-1,0) ellipse (0.2cm and 0.4cm);
    \draw (-1,0.4) -- (0.3, 0.4);
    \draw (0.3,0.4) arc (270:90:-0.75cm and -0.4cm);
    \draw (0.3, -0.4) -- (-1,-0.4);
    \draw[black,fill=black] (-0.8,0) circle (.4ex);
    \draw[black,fill=black] (0.8,0) circle (.4ex);
\end{scope}
\end{tikzpicture}
}
\def\bdtwocircles {
\begin{tikzpicture}[semithick, scale=\bdscale]
\begin{scope}
    \draw (0,-0.5) ellipse (0.2cm and 0.4cm);
    \draw (0,0.5) ellipse (0.2cm and 0.4cm);
\end{scope}
\end{tikzpicture}
}
\newcommand{\bdgenuscappedright}[1][\bdscale] {
\begin{tikzpicture}[semithick, scale=#1, baseline=-0.5ex]
\begin{scope}
    \draw (-0.5,0.1) .. controls (-0.5,-0.125) and (0.5,-0.125) .. (0.5,0.1);
    \draw (-0.4,0.0) .. controls (-0.4,0.0625) and (0.4,0.0625) .. (0.4,0.0);
    \draw (1,0) ellipse (0.2cm and 0.4cm);
    \draw (1,0.4) -- (-0.3, 0.4);
    \draw (-0.3,0.4) arc (90:270:0.75cm and 0.4cm);
    \draw (-0.3, -0.4) -- (1,-0.4);
\end{scope}
\end{tikzpicture}
}
\newcommand{\bdgenuscappedleft}[1][\bdscale] {
\begin{tikzpicture}[semithick, scale=#1, baseline=-0.5ex]
\begin{scope}
    \draw (-0.5,0.1) .. controls (-0.5,-0.125) and (0.5,-0.125) .. (0.5,0.1);
    \draw (-0.4,0.0) .. controls (-0.4,0.0625) and (0.4,0.0625) .. (0.4,0.0);
    \draw (-1,0) ellipse (0.2cm and 0.4cm);
    \draw (-1,0.4) -- (0.3, 0.4);
    \draw (0.3,0.4) arc (270:90:-0.75cm and -0.4cm);
    \draw (0.3, -0.4) -- (-1,-0.4);
\end{scope}
\end{tikzpicture}
}
\newsavebox{\bdgenusbox}
\sbox{\bdgenusbox}{\!\!\bdgenus[0.7]\!\!}
\newsavebox{\bdunitbox}
\sbox{\bdunitbox}{\!\!\bdunit[0.7]\!\!}
\let\oldparagraph=\paragraph
\renewcommand\paragraph[1]{\oldparagraph{#1.}}
\title{\textbf{Cohomology of character stacks via TQFTs}}
\author{Jesse Vogel}
\affil{\footnotesize Mathematical Institute, Leiden University, j.t.vogel@math.leidenuniv.nl}
\date{}
\begin{document}

\maketitle
\begin{abstract}
    We study the cohomology of $G$-representation varieties and $G$-character stacks by means of a topological quantum field theory (TQFT). This TQFT is constructed as the composite of a so-called field theory and the $6$-functor formalism of sheaves on topological stacks. We apply this framework to compute the cohomology of various $G$-representation varieties and $G$-character stacks of closed surfaces for $G = \SU(2), \SO(3)$ and $\U(2)$. This work can be seen as a categorification of earlier work, in which such a TQFT was constructed on the level of Grothendieck groups to compute the corresponding Euler characteristics. 
\end{abstract}


\section{Introduction}
\label{sec:introduction}

Let $\Gamma = \langle x_1, \ldots, x_n \mid r_1, \ldots, r_m \rangle$ be a finitely presented group with generators $x_i$ and relations $r_i$. Given a group $G$, one can consider the set of representations of $\Gamma$ into $G$
\[ \Rep{G}{\Gamma} \defequality \Hom(\Gamma, G) . \]
Identifying a representation $\rho \in \Rep{G}{\Gamma}$ with the tuple $(\rho(x_1), \ldots, \rho(x_n)) \in G^n$, the set $\Rep{G}{\Gamma}$ corresponds to the subset of $G^n$ consisting of tuples $(g_1, \ldots, g_n)$ such that $r_i(g_1, \ldots, g_n) = 1$ for all $i = 1, \ldots, m$.
In particular, when $G$ is a topological group, this equips $\Rep{G}{\Gamma}$ with the subspace topology, and when $G$ is an algebraic group, this equips $\Rep{G}{\Gamma}$ with the structure of an algebraic variety. Generally, when $G$ is a group object in a category $\mathcal{C}$ that admits finite limits, one can construct $\Rep{G}{\Gamma}$ as the fiber product
\[ \begin{tikzcd}
    \Rep{G}{\Gamma} \arrow{r} \arrow{d} & G^{n} \arrow{d}{r} \\ 1 \arrow{r} & G^{m}
\end{tikzcd} \]
where $r = (r_1, \ldots, r_m)$. One can show that $\Rep{G}{\Gamma}$ is independent (up to isomorphism) of the presentation of $\Gamma$.
Note that $G$ naturally acts on $\Rep{G}{\Gamma}$ by conjugation
\[ G \times \Rep{G}{\Gamma} \to \Rep{G}{\Gamma}, \quad (g, \rho) \mapsto g \rho g^{-1} , \]
identifying isomorphic representations. The quotient of $\Rep{G}{\Gamma}$ by $G$ is usually called the \emph{$G$-character variety} of $\Gamma$, and when one enters the realm of stacks, the quotient stack
\[ \CharStck{G}{\Gamma} \defequality \Rep{G}{\Gamma} / G \]
is called the \emph{$G$-character stack}.

Typically, $\Gamma$ is the fundamental group $\pi_1(M, *)$ of a connected closed manifold $M$ (in fact, every finitely presented group arises in this way), and in this case we also write $\Rep{G}{M}$ instead of $\Rep{G}{\pi_1(M, *)}$. When $G$ is a topological group, $\Rep{G}{M}$ parametrizes $G$-local systems (or $G$-torsors) on $M$ \cite[Theorem 2.5.15]{Szamuely2009}, and $\CharStck{G}{M}$ parametrizes them up to isomorphism.

\paragraph{Non-abelian Hodge theory}

A particularly well-studied case is that of the fundamental group of a closed orientable surface $\Sigma_g$ of genus $g$,
\[ \Gamma = \pi_1(\Sigma_g, *) = \langle a_1, b_1, \ldots, a_g, b_g \mid [a_1, b_1] \cdots [a_g, b_g] = 1 \rangle \]
where $[a_i, b_i] = a_i b_i a_i^{-1} b_i^{-1}$ denotes the group commutator. The corresponding $G$-character variety plays an important role in non-abelian Hodge theory: if $\Sigma_g$ is the underlying space of a smooth projective curve $C$, and $G$ is a semisimple complex algebraic group, then the character variety is real analytically isomorphic to a certain moduli space of Higgs bundles of on $C$ and a moduli space of flat connections on $C$ \cite{Corlette1988, Donaldson1987, Simpson1991, Simpson1994a}. In particular, the cohomology of all three moduli spaces coincide.

Under these correspondences, Hitchin computed the Poincaré polynomial of the $G$-character variety of $\Sigma_g$ for $G = \SL_2(\CC)$ \cite{Hitchin1987}, Gothen for $G = \SL_3(\CC)$ \cite{Gothen1994}, and García-Prada, Heinloth and Schmitt for $G = \GL_4(\CC)$ \cite{GarciaPradaHeinlothSchmitt2014}.
In recent years, these moduli spaces have been the subject of extensive research, and many new methods were developed to compute their cohomology or cohomology-like invariants.



\paragraph{Topological quantum field theories}

The method that is of particular interest to us is the method initiated by González-Prieto, Logares and Muñoz \cite{GonzalezLogaresMunoz2020}. They constructed a \emph{topological quantum field theory (TQFT)} that quantizes the virtual Hodge--Deligne polynomials of the representation varieties $\Rep{G}{\Sigma_g}$ for a complex algebraic group $G$. Let us explain what this means. Originating from physics, a TQFT is a (lax) symmetric monoidal functor $Z \colon \Bord_n \to \Mod_R$ from the category $\Bord_n$ of $n$-dimensional bordisms to the category $\Mod_R$ of $R$-modules over some commutative ring $R$, where the monoidal structures are given by the disjoint union of manifolds and the tensor product $\otimes_R$, respectively. Given a closed $n$-dimensional manifold $M$, one can view $M$ as a bordism from and to the empty manifold $\varnothing$ to produce a morphism $Z(M) \colon Z(\varnothing) \to Z(\varnothing)$. Since $Z(\varnothing) = R$ by monoidality, the $R$-linear map $Z(M)$ is multiplication by the element $Z(M)(1) \in R$. In other words, a TQFT $Z$ associates to every closed manifold $M$ an $R$-valued invariant $Z(M)(1)$, and we say that $Z$ \emph{quantizes} this invariant.

In \cite{GonzalezLogaresMunoz2020}, 
the authors constructed, for every complex algebraic group $G$, a TQFT
\begin{equation}
    \label{eq:tqft}
    Z_G \colon \Bord_n \to \K(\cat{MHS})\textup{-}\Mod ,
\end{equation}
where $R = \K(\cat{MHS})$ denotes the Grothendieck ring of mixed Hodge structures, such that $Z_G$ quantizes the class of the mixed Hodge structure on the $G$-representation variety of $M$.
To be precise, the original construction \cite{GonzalezLogaresMunoz2020} works with \emph{pointed bordisms} (which are bordisms with a specified set of basepoints that are used to keep track of non-trivial loops that arise when bordisms are glued), but in a later reformulation of the method \cite{GonzalezHablicsekVogel2023}, the $G$-representation variety was replaced by the $G$-character stack, which allowed to get rid of the need for specified basepoints as the automorphism groups of the stacky points contain enough information to keep track of the arising non-trivial loops.

Concretely, in dimension $n = 2$, the objects of $\Bord_2$ are (disjoint unions of) circles, and the morphisms are $2$-dimensional compact manifolds with boundary, called bordisms, connecting the objects. Composition of bordisms is performed by gluing common boundaries, and hence, a closed oriented surface $\Sigma_g$ of genus $g$ can be written as the composite of bordisms as follows:
\begin{equation}
    \label{eq:decomposition_sigma_g}
    \Sigma_g = \bdcounit \circ \underbrace{\bdgenus \circ \cdots \circ \bdgenus}_{g \textup{ times}} \circ \; \bdunit
\end{equation}
This reduces the problem of computing the invariants $Z_G(\Sigma_g)(1)$ to computing the linear maps $Z_G\left(\bdcounit[0.8]\right)$, $Z_G\left(\bdgenus[0.7]\right)$ and $Z_G\left(\bdunit[0.8]\right)$.
This method has been used to compute invariants of the $G$-representation varieties and $G$-character stacks of $\Sigma_g$ for $G = \SL_2(\CC)$ \cite{Gonzalez2020} and for $G$ equal to the groups of upper triangular matrices \cite{HablicsekVogel2022}.

\paragraph{Categorification}

We address two disadvantages of the TQFT method. Firstly, as the computed invariant lives in the Grothendieck ring of mixed Hodge structures, we are effectively only computing the `Euler characteristic' of the mixed Hodge structure of the $G$-character stack. Only in very special cases, such as for smooth projective varieties, can one infer the mixed Hodge structure itself from the virtual Hodge--Deligne polynomial. This is not the case for character stacks.
Secondly, even though both the category of bordisms and the category of stacks naturally admit a higher categorical structure (at least a $2$-categorical structure), this higher structure is not reflected in the TQFT: $Z_G$ makes use only of the $1$-categorical truncation of these categories.

The goal of this paper is improve on both points by \textit{categorifying} the TQFT method. That is, we will replace the $\K$-groups by the derived category. This will allow us to not only compute the Euler characteristic of the character stacks, but rather their whole cohomology.
Of course, this upgrade does not come for free: whereas for the computations in the $\K$-groups we may split every distinguished triangle that we encounter, for the computations in the derived category we have to deal with the connecting homomorphisms.

\paragraph{Six-functor formalisms}

The TQFT \eqref{eq:tqft} is constructed as the composite of two functors: a symmetric monoidal functor $F_G \colon \Bord_n \to \Corr(\Stck)$, called the \emph{field theory}, and a lax symmetric monoidal functor $Q \colon \Corr(\Stck) \to \K(\cat{MHS})\textup{-}\Mod$, called the \emph{quantization functor}. The field theory $F_G$ assigns to an object of $\Bord_n$, which is a closed $(n - 1)$-dimensional manifold $M$, the corresponding $G$-character stack $\CharStck{G}{M}$, and assigns to a bordism $W \colon M_1 \to M_2$, which is a compact $n$-dimensional manifold whose boundaries are $M_1$ and $M_2$, the correspondence of stacks
\[ \begin{tikzcd}[row sep=0em] & \arrow{ld} \CharStck{G}{W} \arrow{rd} \\ \CharStck{G}{M_1} & & \CharStck{G}{M_2} \end{tikzcd} \]
where the maps are induced by the inclusions $M_i \to W$. The quantization functor $Q$ assigns to a stack $\mathfrak{X}$ the Grothendieck group $\K(\cat{MHM}_\mathfrak{X})$ of mixed Hodge modules over $\mathfrak{X}$, and to a correspondence $\mathfrak{X} \xleftarrow{f} \mathfrak{Z} \xrightarrow{g} \mathfrak{Y}$ of stacks the morphism
\[ g_! f^* \colon \K(\cat{MHM}_\mathfrak{X}) \to \K(\cat{MHM}_\mathfrak{Y}) \]
induced by the inverse image functor $f^* \colon \cat{MHM}_\mathfrak{X} \to \cat{MHM}_\mathfrak{Z}$ and the direct image with compact support functor $g_! \colon \cat{MHM}_\mathfrak{Z} \to \cat{MHM}_\mathfrak{Y}$. 

Now, the categories $\Bord_n$ and $\Corr(\Stck)$ are naturally admit a $2$-categorical structure, and the functor $F_G$ can be promoted to a $2$-functor. However, this is not the case for the functor $Q$, as $\K(\cat{MHS})\textup{-}\Mod$ is only a $1$-category. For this reason, it makes sense to replace the Grothendieck groups $Q(\mathfrak{X}) = \K(\cat{MHM}_\mathfrak{X})$ by the derived categories $D(\cat{MHM}_\mathfrak{X})$ to obtain a lax symmetric monoidal $2$-functor
\begin{equation}
    \label{eq:quantization_functor_2_categorical}
    Q \colon \Corr(\Stck) \to \Cat
\end{equation}
to the $2$-category of categories. 

The categories of mixed Hodge modules are not special in this regard: given any $6$-functor formalism, one can construct lax symmetric monoidal functor as in \eqref{eq:quantization_functor_2_categorical}. In fact, such a functor is precisely what defines a $3$-functor formalism, following \cite[Definition A.5.10]{Mann2022} and \cite[Definition 2.4]{Scholze2022} (the three functors being $f^*$, $f_!$ and $\otimes$; a $6$-functor formalism is a $3$-functor formalism for which the three functors have right adjoints). This leads to a significant generalization of the TQFT method: for any group object $G$ in a category $\mathcal{C}$ and a $6$-functor formalism on $\mathcal{C}$, one constructs a `TQFT' $Z_G$ quantizing the corresponding cohomology of the $G$-character stack.

\paragraph{Derived category of sheaves}

For simplicity and concreteness, we will focus on the derived category of constructible sheaves on topological spaces. Given a topological space $X$, denote by $D(X) \defequality D(\Sh(X))$ the derived category of sheaves of abelian groups on $X$. For any morphism $f \colon X \to Y$ of topological spaces, we obtain the functors (by convention, all functors are derived)
\[ \begin{gathered}
    f^*, f^! \colon D(Y) \to D(X) \quad \textup{ and } \quad f_*, f_! \colon D(X) \to D(Y)
\end{gathered} \]
satisfying various compatibilities, such as the projection formula and the proper base change
\[ f_! (-) \otimes (-) \cong f_! ((-) \otimes f^* (-)) \quad \textup{ and } \quad g^* f_! \cong f'_! g'^* . \]
Details, and a lot more work on derived categories of sheaves, can be found in \cite{KashiwaraSchapira1990}.

Together, these functors form a $6$-functor formalism on the category of locally compact Hausdorff spaces \cite[Theorem 7.4]{Scholze2022}, which can be extended to the category of stacks over such spaces \cite[Theorem 4.20]{Scholze2022}. Composing with the field theory $F_G \colon \Bord_n \to \Corr(\TopStck)$ yields a lax TQFT $Z_G$ that quantizes the cohomology with compact support of the $G$-character stack.

\begin{theoremx}
    For any locally compact topological group $G$, there exists a lax symmetric monoidal functor $Z_G \colon \Bord_n \to \Cat_\infty$ that quantizes the cohomology with compact support of the $G$-character stack.
\end{theoremx}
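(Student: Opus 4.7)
The plan is to construct $Z_G$ as the composite
\[ \Bord_n \xrightarrow{F_G} \Corr(\TopStck) \xrightarrow{Q} \Cat_\infty \]
of two lax symmetric monoidal functors already sketched in the introduction: the field theory $F_G$, which sends a closed $(n-1)$-manifold to its $G$-character stack and a bordism to the induced correspondence, and the quantization functor $Q$ extracted from the $6$-functor formalism for sheaves on locally compact Hausdorff stacks. What remains is to promote both factors to genuine $(\infty,1)$-functors with full coherence, and to verify that for a closed $n$-manifold $W$ the composite recovers the compactly supported cohomology $R\Gamma_c(\CharStck{G}{W}; \uZZ)$.

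For $F_G \colon \Bord_n \to \Corr(\TopStck)$, I set $F_G(M) = \CharStck{G}{M}$ on objects, and on a bordism $W \colon M_1 \to M_2$ produce the correspondence $\CharStck{G}{M_1} \leftarrow \CharStck{G}{W} \to \CharStck{G}{M_2}$ coming from the two boundary inclusions. Functoriality under gluing $W = W_2 \cup_{M_2} W_1$ follows from Seifert--van Kampen applied to the fundamental groupoids: the latter realize the pushout, and applying $\Hom(-, G)$ and then quotienting by conjugation turns this pushout into a fiber product of character stacks, matching composition in $\Corr(\TopStck)$. Symmetric monoidality is the observation $\CharStck{G}{M \sqcup M'} \cong \CharStck{G}{M} \times \CharStck{G}{M'}$, since $\pi_1$ turns disjoint union into free product. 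This extends the construction of \cite{GonzalezHablicsekVogel2023} at the level of $(\infty,1)$-categories.

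For $Q$, the $6$-functor formalism of \cite[Theorem 4.20]{Scholze2022} assigns to each stack $\mathfrak{X}$ the derived $\infty$-category $D(\mathfrak{X})$, and to a correspondence $\mathfrak{X} \xleftarrow{f} \mathfrak{Z} \xrightarrow{g} \mathfrak{Y}$ the functor $g_! f^*$. As noted in the introduction, following \cite[Definition 2.4]{Scholze2022} and \cite[Definition A.5.10]{Mann2022}, this data is \emph{tautologically} equivalent to a lax symmetric monoidal $\infty$-functor $Q \colon \Corr(\TopStck) \to \Cat_\infty$, so no separate construction is required: composition of correspondences is controlled by proper base change, and lax monoidality by the Künneth/projection formula map $g_!(A) \boxtimes g'_!(B) \to (g \times g')_!(A \boxtimes B)$.

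Taking the composite $Z_G = Q \circ F_G$ then gives the desired lax symmetric monoidal functor. For a closed $n$-manifold $W$ viewed as a bordism $\varnothing \to \varnothing$, the functor $Z_G(W) \colon D(\{*\}) \to D(\{*\})$ unwinds to $\pi_! \pi^*$ where $\pi \colon \CharStck{G}{W} \to \{*\}$ is the structure map, and evaluated at $\uZZ$ this returns $\pi_! \uZZ = R\Gamma_c(\CharStck{G}{W}; \uZZ)$, as required. The main obstacle is not the formal construction but the coherence bookkeeping: the Seifert--van Kampen gluing for $F_G$ and the higher base-change/Künneth coherences for $Q$ must hold coherently in the $\infty$-categorical sense rather than merely up to isomorphism. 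This is precisely the work that the formalism of \cite{Scholze2022, Mann2022} performs, and invoking it is what makes the construction go through without having to verify all higher coherences by hand.
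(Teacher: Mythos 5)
Your proposal is essentially the same as the paper's: construct $Z_G = D \circ F_G$ by composing the field theory $F_G \colon \Bord_n \to \Corr(\TopStck,\tilde E)$ with the lax symmetric monoidal functor $D$ supplied by the $6$-functor formalism of \cite[Theorem 4.20]{Scholze2022}, then evaluate on a closed $W$ to get $\pi_!\pi^*$. One small imprecision worth flagging: the Scholze--Mann formalism discharges the coherence obligations for the quantization leg $D$ only; for $F_G$ the paper sidesteps full $\infty$-categorical coherence by modelling $\Bord_n$ as a bicategory (via the Duskin nerve) and specifying $F_G$ explicitly on objects, bordisms, and diffeomorphism $2$-cells (the cartesian square encoding the Seifert--van Kampen gluing you describe), so the higher coherences for $F_G$ come from that $2$-categorical truncation rather than from \cite{Scholze2022, Mann2022}.
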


\paragraph{Applications}

An an application, we will use the TQFT of sheaves to compute the cohomology of the $G$-representation variety and $G$-character stacks of the closed orientable surfaces $\Sigma_g$ for the group $G$ equal to $\SU(2)$, $\SO(3)$ or $\U(2)$ and any genus $g \ge 0$, and some variations thereof. An overview of the precise cohomology groups that are computed can be found in \cref{tab:main_theorems}.

\begin{table}[h!t]
    \renewcommand{\arraystretch}{1.5}
    \centering
    \begin{tabular}{|c|c|}
         \hline
         \cref{thm:poincare_polynomial_SU2_representation_varieties} & $H^*_{(c)}(\Rep{\SU(2)}{\Sigma_g}; \QQ)$ for all $g \ge 0$ \\
         \hline
         \cref{cor:poincare_polynomial_twisted_SU2_representation_varieties} & $H^*_{(c)}(\RepTw{\SU(2)}{\Sigma_g}{C}; \QQ)$ for all $g \ge 0$ and $C \ne 1$ \\
         \hline
         \cref{prop:expression_Sn} & $H^*_{(c)}(\Rep{\SU(2)}{N_r}; \QQ)$ for all $r \ge 0$ \\
         \hline
         \cref{cor:poincare_polynomial_SO3_representation_varieties} & $H^*_{(c)}(\Rep{\SO(3)}{\Sigma_g}; \QQ)$ for all $g \ge 0$ \\
         \hline
         \cref{prop:Un_in_terms_of_PUn} & $H^*_{(c)}(\Rep{\U(2)}{\Sigma_g}; \QQ)$ for all $g \ge 0$ \\
         \hline
         \cref{thm:cohomology_SU2_character_stacks} & $H^*_c(\CharStck{\SU(2)}{\Sigma_g}; \QQ)$ and $H^*(\CharStck{\SU(2)}{\Sigma_g}; \QQ)$ for all $g \ge 0$ \\
         \hline
         \cref{thm:cohomology_twisted_SU2_character_stacks} & $H^*_{(c)}(\CharStckTw{\SU(2)}{\Sigma_g}{C}; \QQ)$ for all $g \ge 0$ and $C \ne 1$ \\
         \hline 
    \end{tabular}
    \caption{The computational results of this paper. The symbol $\Sigma_g$ denotes the closed orientable surface of genus $g$, and $N_r$ denotes the non-orientable surface of demigenus $r$. Furthermore, we write $H^*$ for cohomology and $H^*_c$ cohomology with compact support, and we write $H^*_{(c)}$ when they agree (e.g.\ when the space is compact).}
    \label{tab:main_theorems}
\end{table}

The computation of the cohomology of these representation varieties and character stacks are particularly interesting for the following reasons.
First, because of the Narasimhan--Seshadri theorem \cite{NarasimhanSeshadri1965}, which states that stable vector bundles (resp.\ with unit determinant) on $\Sigma_g$ are in correspondence with unitary representations (resp.\ with unit determinant) of the fundamental group $\Gamma = \pi_1(\Sigma_g, *)$.
Secondly, as described by \cite{FlorentinoLawton2024}, for a certain class of so-called \textit{flawed groups} $\Gamma$, whenever $G$ is a complex reductive group with maximal compact subgroup $K$, the $G$-character variety of $\Gamma$ deformation retracts onto the $K$-character variety of $\Gamma$, so in particular their cohomology coincides. While there are many flawed groups (e.g.\ every free, finite or nilpotent group is flawed), the surface groups $\Gamma = \pi_1(\Sigma_g, *)$ are \textit{flawless} (for $g \ge 2$) \cite[Example 2.7]{FlorentinoLawton2024}. Therefore, even though the groups $\SU(2)$, $\SO(3)$ and $\U(2)$ are the maximal compact subgroups of the complex groups $\SL_2(\CC), \PGL_2(\CC)$ and $\GL_2(\CC)$, respectively, the results of this paper could not have been obtained from the cohomology of the representation varieties or character stacks for these complex groups.
%

Finally, an interesting observation can be made about the computed invariants.
Namely, a common theme in the $\K$-theoretic setting is that the $\K$-invariants for the $G$-representation varieties of $\Sigma_g$ satisfy a recurrence relation for increasing values of $g$ due to the linear map $Z_G \left( \bdgenus[0.7] \right)$ restricting to a finitely generated submodule of $Z_G \left( S^1 \right)$ (cf.\ \cite{GonzalezLogaresMunoz2020, Gonzalez2020, HablicsekVogel2022}). However, this phenomenon does not occur for the computed Poincaré polynomials of the representation varieties, that is, they do not satisfy a recurrence relation.

\section{Derived categories of sheaves}

Let us recall the basic properties of the derived category of sheaves on topological spaces. Fix a commutative ring $\KK$. Given a topological space $X$, denote by $\Sh(X)$ the category of sheaves of $\KK$-modules on $X$, and by $D^b(X) \defequality D^b(\Sh(X))$ the corresponding bounded derived category. Given a $\KK$-module $M$, denote by $\underline{M} \in D^b(X)$ the object corresponding to the constant sheaf on $X$. 
When $X$ is a point, we have $\Sh(X) = \Mod_\KK$ and we denote the constant sheaf $\underline{M}$ also simply by $M$. 
From now on, we assume that all topological spaces are locally compact and Hausdorff.

Given a continuous map $f \colon X \to Y$, one defines the \emph{direct image functor}
\[ f_* \colon \Sh(X) \to \Sh(Y), \quad (f_* \mathcal{F})(V) = \mathcal{F}(f^{-1}(V)) , \]
and the \emph{inverse image functor} 
\[ f^* \colon \Sh(Y) \to \Sh(X) , \]
where, for a sheaf $\mathcal{G}$ on $Y$, the sheaf $f^* \mathcal{G}$ on $X$ is the sheaf associated to the presheaf $U \mapsto \varinjlim_{V \supseteq f(U)} \mathcal{G}(V)$ \cite[Definition 2.3.1]{KashiwaraSchapira1990}. These functors form an adjoint pair $f^* \dashv f_*$  \cite[Proposition 2.3.3]{KashiwaraSchapira1990}.
While $f^*$ is always exact \cite[Example 2.3.2]{KashiwaraSchapira1990}, the functor $f_*$ is only left exact, and exact in special cases such as when $f$ is a closed immersion. Hence, these functors induce derived functors between $D^b(X)$ and $D^b(Y)$, which we also denote by $f_*$ and $f^*$.

Furthermore, one defines the \emph{direct image functor with compact support} \cite[(2.5.1)]{KashiwaraSchapira1990}
\[ f_! \colon \Sh(X) \to \Sh(Y) , \quad (f_! \mathcal{F})(V) = \left\{ s \in \mathcal{F}(f^{-1}(V)) \mid f \colon \operatorname{supp}{(s)} \to V \textup{ is proper} \right\} , \]
which is left exact, and induces a right derived functor $f_! \colon D^b(X) \to D^b(Y)$. In case $f$ is the immersion of a locally closed subset, then $f_!$ is exact. When $f$ is proper, one has $f_! = f_*$.

When $f_!$ has finite cohomological dimension \cite[(3.1.3)]{KashiwaraSchapira1990}, the functor $f_!$ admits a right adjoint $f^! \colon D^b(Y) \to D^b(X)$ called the \emph{exceptional inverse image functor} (which is in general not the derived functor of a functor of sheaves). When $f$ is an open immersion, one has $f^! = f^*$.

When $f \colon X \to *$ is the projection to a point, the underived functors $f_*$ and $f_!$ are given by taking global sections (resp.\ with compact support). Hence, the derived functors $f_*$ and $f_!$ correspond to sheaf cohomology (resp.\ with compact support), that is,
\[ f_* \mathcal{F} = H^*(X, \mathcal{F}) \quad \textup{ and } \quad f_! \mathcal{F} = H_c^*(X, \mathcal{F}) . \]

The assignments from $f$ to $f^*, f_*, f_!$ and $f^!$ are all functorial, meaning that, given continuous maps $f \colon X \to Y$ and $g \colon Y \to Z$, we have natural isomorphisms \cite[(2.3.9), (2.6.5), (2.6.6), Proposition 3.1.8]{KashiwaraSchapira1990}
\[ (g f)^* \cong f^* g^*, \quad (g f)_* \cong g_* f_*, \quad (g f)_! \cong g_! f_! \quad \textup{and} \quad (g f)^! \cong f^! g^! . \]

The tensor product $\mathcal{F} \otimes \mathcal{G}$ of two sheaves $\mathcal{F}$ and $\mathcal{G}$ over $X$ is the sheaf associated to the presheaf $U \mapsto \mathcal{F}(U) \otimes_\KK \mathcal{G}(U)$ \cite[Definition 2.2.8]{KashiwaraSchapira1990}. The tensor product is right exact in both arguments, and induces a left derived tensor product on $D^b(X)$, also denoted by $\otimes$, when $\KK$ has \textit{finite weak global dimension} (meaning there exists an integer $n$ such that every $\KK$-module has a flat resolution of length $\le n$) \cite[p.110]{KashiwaraSchapira1990}. We will assume $\KK$ has this property.

Given sheaves $\mathcal{F}$ and $\mathcal{G}$ over $X$, one defines $\iHom(\mathcal{F}, \mathcal{G})$ as the sheaf over $X$ given by $U \mapsto \Hom_{\Sh(U)}(\mathcal{F}|_U, \mathcal{G}|_U)$ \cite[Definition 2.2.7]{KashiwaraSchapira1990}. The functor $\iHom$ is left exact in its second argument and induces a right derived functor $\iHom \colon D^-(X)^\textup{op} \times D^b(X) \to D^b(X)$. The functors $\iHom$ and $\otimes$ satisfy the usual tensor-hom adjunction.

Given spaces $X$ and $Y$, the \emph{external tensor product} is given by
\begin{equation}
    \label{eq:external_tensor_product}
    \boxtimes \colon D^b(X) \times D^b(Y) \to D^b(X \times Y), \quad (\mathcal{F}, \mathcal{G}) \mapsto \pi_X^* \mathcal{F} \otimes \pi_Y^* \mathcal{G} ,
\end{equation}
where $\pi_X \colon X \times Y \to X$ and $\pi_Y \colon X \times Y \to Y$ are the projections. Since the functors $\pi_X^*, \pi_Y^*$ and $\otimes$ are all exact, so is $\boxtimes$.

The following propositions provide a number of compatibilities between the functors described above that will be used throughout this paper. Note, however, that this is by far not an exhaustive list of compatibilities between the functors.

\begin{proposition}[{Proper base change \cite[Proposition 2.6.7]{KashiwaraSchapira1990}}]
    \label{prop:proper_base_change}
    Given a cartesian square 
    \begin{equation}
        \label{eq:proper_base_change_diagram}
        \begin{tikzcd}
            X \times_Z Y \arrow{r}{f'} \arrow{d}{g'} & Y \arrow{d}{g} \\ 
            X \arrow{r}{f} & Z
        \end{tikzcd}
    \end{equation}
    there is a canonical natural isomorphism of functors $f^* g_! \cong (g')_! (f')^*$.
\end{proposition}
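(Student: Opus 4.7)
The plan is to follow the standard argument of Kashiwara--Schapira, producing the base change morphism at the level of sheaves, verifying it is an isomorphism stalkwise using the cartesian identification of fibers, and then lifting to the derived category via $c$-soft resolutions.

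First, I would construct the base change natural transformation
\[ \beta \colon f^* g_! \Longrightarrow g'_! (f')^* \]
at the level of underived sheaves. Given $\mathcal{F} \in \Sh(Y)$, $(g_! \mathcal{F})(V)$ consists of sections $s \in \mathcal{F}(g^{-1}(V))$ with proper support over $V$; pulling back via $f$ and noting that the inverse image of $g^{-1}(V)$ under $f'$ is $(g')^{-1}(f^{-1}(V))$, one obtains, by restriction of sections, a canonical morphism $f^* g_! \mathcal{F} \to g'_! (f')^* \mathcal{F}$. Equivalently, one constructs $\beta$ as the mate of the unit/counit composition coming from the adjunctions $f^* \dashv f_*$ and $(f')^* \dashv (f')_*$.

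Next, I would check that $\beta$ is an isomorphism of sheaves by comparing stalks. Fix $x \in X$ and set $z = f(x)$. Using the formula for the stalks of $g_!$ (see \cite[Proposition 2.5.2]{KashiwaraSchapira1990}), we have
\[ (f^* g_! \mathcal{F})_x \cong (g_! \mathcal{F})_z \cong \Gamma_c\bigl( g^{-1}(z), \mathcal{F}|_{g^{-1}(z)} \bigr) , \]
and similarly
\[ (g'_! (f')^* \mathcal{F})_x \cong \Gamma_c\bigl( (g')^{-1}(x), (f')^* \mathcal{F}|_{(g')^{-1}(x)} \bigr) . \]
The cartesian condition on \eqref{eq:proper_base_change_diagram} gives a canonical homeomorphism $f'|_{(g')^{-1}(x)} \colon (g')^{-1}(x) \xrightarrow{\sim} g^{-1}(z)$, under which the restrictions of $(f')^* \mathcal{F}$ and $\mathcal{F}$ are identified. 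Hence the two stalks agree, and $\beta$ is a sheaf isomorphism.

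Finally, I would lift this to the derived category. Choose a $c$-soft resolution $\mathcal{F} \to \mathcal{I}^\bullet$, which exists in the locally compact Hausdorff setting and computes $R g_! \mathcal{F}$ (\cite[Proposition 2.5.9]{KashiwaraSchapira1990}). The class of $c$-soft sheaves is stable under pullback along continuous maps, so $(f')^* \mathcal{I}^\bullet$ is again $c$-soft and computes $R g'_! (f')^* \mathcal{F}$. Since $f^*$ is exact, applying the sheaf-level isomorphism termwise yields the desired isomorphism in $D^b(X)$. The main technical obstacle is exactly this derived upgrade: one must check that the base change morphism constructed at the sheaf level is natural enough to assemble into a morphism of complexes and that the resolution $(f')^* \mathcal{I}^\bullet$ really computes the pushforward with compact support. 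Both points are standard consequences of the good homological behaviour of $c$-soft sheaves under pullback and $f_!$.
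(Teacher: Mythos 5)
The paper itself does not prove this proposition; it is imported directly from Kashiwara--Schapira, and later (in the 6-functor-formalism section) the statement is re-derived \emph{formally} from the structure of a $3$-functor formalism by applying $D$ to the tautological isomorphism of correspondences. Your proposal instead reconstructs the classical proof, which is a reasonable choice, and the overall outline (build the base change morphism, check it stalkwise, upgrade to the derived category via $c$-soft resolutions) matches the argument in \cite{KashiwaraSchapira1990}.

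There is, however, a genuine error in the derived upgrade: the class of $c$-soft sheaves is \emph{not} stable under pullback along arbitrary continuous maps. The constant sheaf $\uQQ$ on $\RR^n$ (for $n \ge 1$) is the pullback of the trivially $c$-soft sheaf $\QQ$ on a point, yet it is not $c$-soft --- a compact set with two components already defeats the extension property. So ``$(f')^* \mathcal{I}^\bullet$ is again $c$-soft'' is false as stated, and your argument that $(f')^* \mathcal{I}^\bullet$ computes $Rg'_!(f')^*\mathcal{F}$ has a gap. What is actually true, and what Kashiwara--Schapira use, is the weaker fiberwise statement: a $c$-soft sheaf on $Y$ restricts to a $c$-soft sheaf on each fiber $g^{-1}(z)$ (\cite[Prop.\ 2.5.7]{KashiwaraSchapira1990}), and hence via the fiber homeomorphism $(g')^{-1}(x) \cong g^{-1}(f(x))$, the restriction of $(f')^*\mathcal{I}$ to each fiber of $g'$ is $c$-soft. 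Combined with the derived stalk formula $\bigl(Rg_!\mathcal{F}\bigr)_z \cong R\Gamma_c\bigl(g^{-1}(z),\mathcal{F}|_{g^{-1}(z)}\bigr)$, one compares stalks of $f^*Rg_!\mathcal{F}$ and $Rg'_!(f')^*\mathcal{F}$ directly in the derived category, which is both cleaner and correct. You should replace the global-$c$-softness claim with this fiberwise statement plus the derived stalk formula.
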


\begin{proposition}[{Projection formula \cite[Proposition 2.6.6]{KashiwaraSchapira1990}}]
    Given a continuous map $f \colon X \to Y$ and objects $\mathcal{F} \in D^b(X)$ and $\mathcal{G} \in D^b(Y)$, there is a isomorphism
    \[ f_! \mathcal{F} \otimes \mathcal{G} \cong f_! \left(\mathcal{F} \otimes f^* \mathcal{G} \right) \]
    which is natural in $\mathcal{F}$ and $\mathcal{G}$.
\end{proposition}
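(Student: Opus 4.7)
The plan is to construct a natural comparison morphism at the underived level and then prove it is an isomorphism by checking stalks, reducing via proper base change (\cref{prop:proper_base_change}) to a statement about cohomology with compact support of a single fiber.

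First, I would construct the natural map $\alpha \colon f_! \mathcal{F} \otimes \mathcal{G} \to f_!(\mathcal{F} \otimes f^* \mathcal{G})$ by describing it on honest sheaves and then deriving. Concretely, for an open $V \subseteq Y$, a section $s \in (f_!\mathcal{F})(V)$ (i.e.\ a section of $\mathcal{F}$ over $f^{-1}V$ with support proper over $V$) paired with $t \in \mathcal{G}(V)$ yields $s \otimes f^*(t) \in (\mathcal{F} \otimes f^*\mathcal{G})(f^{-1}V)$, whose support is still proper over $V$. This assembles into a natural transformation of functors on $\Sh(X) \times \Sh(Y)$, which I would promote to the derived category by replacing $\mathcal{G}$ with a K-flat resolution and $\mathcal{F}$ with an $f_!$-acyclic (e.g.\ soft) resolution.

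Next, I would verify that $\alpha$ is an isomorphism pointwise. Since both sides lie in $D^b(Y)$ and the stalk functors $(-)_y$ form a conservative family there, it suffices to show that $\alpha_y$ is an isomorphism for every $y \in Y$. By \cref{prop:proper_base_change} applied to the cartesian square determined by the inclusion $\{y\} \hookrightarrow Y$, one has $(f_!\mathcal{F})_y \cong R\Gamma_c(f^{-1}(y), \mathcal{F}|_{f^{-1}(y)})$, and similarly for $f_!(\mathcal{F} \otimes f^*\mathcal{G})$. Since the stalks of $f^*\mathcal{G}$ along $f^{-1}(y)$ are all equal to $\mathcal{G}_y$, this reduces the problem to showing that the natural map
\[ R\Gamma_c(Z, \mathcal{A}) \otimes^L M \to R\Gamma_c(Z, \mathcal{A} \otimes^L \underline{M}) \]
is an isomorphism, where $Z = f^{-1}(y)$, $\mathcal{A} = \mathcal{F}|_Z$, and $M = \mathcal{G}_y$.

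Finally, I would prove this reduced statement by using that $\KK$ has finite weak global dimension to choose a bounded flat resolution of $M$, reducing to the case where $M$ is a single flat $\KK$-module. In that case $\underline{M}$ is flat on $Z$, so $\mathcal{A} \otimes^L \underline{M} = \mathcal{A} \otimes \underline{M}$ underived, and the claim follows since $R\Gamma_c(Z, -)$ can be computed via a soft resolution, which is preserved by $-\otimes \underline{M}$ when $\underline{M}$ is flat. The main obstacle I expect is not the content of any individual step but the bookkeeping involved in comparing constructions: verifying that the morphism defined at the sheaf level in the first paragraph agrees, after taking stalks and invoking proper base change, with the canonical comparison map on $R\Gamma_c$ analyzed in the last paragraph, is standard but genuinely requires care in the simultaneous choice of resolutions.
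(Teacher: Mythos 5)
The paper does not give a proof of this proposition: it is quoted from Kashiwara--Schapira \cite[Proposition 2.6.6]{KashiwaraSchapira1990}, so there is no internal argument to compare against. Your strategy --- define the comparison map at the sheaf level, derive it, then test it on stalks by reducing to a single fiber via base change --- is a sound and genuinely different route from the textbook proof, which instead produces the isomorphism directly at the chain level by choosing a bounded resolution of $\mathcal{F}$ by sheaves that are simultaneously flat and $c$-soft and observing that the underived projection morphism is already an isomorphism on such a resolution. Note also that for the stalk computation you only need base change along the inclusion of a point (or, more generally, of a locally closed subset), which in \cite{KashiwaraSchapira1990} is established before and independently of both the projection formula and the full proper base change theorem, so there is no circularity in appealing to it.

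There is, however, a real (if repairable) gap in your final paragraph. You claim that a soft resolution of $\mathcal{A}$ stays soft after applying $-\otimes\underline{M}$ because $\underline{M}$ is flat, but flatness of the \emph{constant} factor is not what the standard lemma provides. The relevant statement in \cite{KashiwaraSchapira1990} requires the \emph{soft} factor to be flat: if $\mathcal{B}$ is both flat and $c$-soft, then $\mathcal{B}\otimes\mathcal{H}$ is $c$-soft for an \emph{arbitrary} sheaf $\mathcal{H}$. The fix is to choose the resolution $\mathcal{A}^\bullet$ of $\mathcal{A}$ (and likewise $\mathcal{F}^\bullet$ of $\mathcal{F}$ in your first paragraph, where the derived comparison morphism is actually built) to be a bounded complex of sheaves that are simultaneously flat and $c$-soft; this exists because $\KK$ has finite weak global dimension, and then softness of $\mathcal{A}^\bullet\otimes\underline{M}$ holds for \emph{any} $M$, flat or not. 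Flatness of $M$ then only serves to identify $\otimes^L$ with the underived $\otimes$. You still need to justify the identification $\Gamma_c(Z,\mathcal{A}^\bullet\otimes\underline{M}) \cong \Gamma_c(Z,\mathcal{A}^\bullet)\otimes_\KK M$ at the level of complexes; this follows from Lazard's theorem (writing $M$ as a filtered colimit of finite free modules) together with the commutation of $\Gamma_c$ with filtered colimits of $c$-soft sheaves, which imposes the same mild countability-at-infinity hypothesis on $Z = f^{-1}(y)$ that is standard in \cite{KashiwaraSchapira1990}. With these corrections --- flat soft rather than merely soft resolutions, and the colimit argument made explicit --- your approach closes.
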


\begin{proposition}[{Localization triangle \cite[(2.6.33)]{KashiwaraSchapira1990}}]
    \label{prop:localization_distinguished_triangle}
    Let $i \colon Z \to X$ be a closed immersion with open complement $j \colon U \to X$. Then for every object $\mathcal{F} \in D^b(X)$, there is a distinguished triangle
    \[ j_! j^* \mathcal{F} \to \mathcal{F} \to i_* i^* \mathcal{F} \xrightarrow{+} \]
    in $D^b(X)$, where the first two maps are given by the counit of the adjunction $j_! \dashv j^*$ and the unit of the adjunction $i^* \dashv i_*$, respectively.
\end{proposition}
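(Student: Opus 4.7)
The plan is to establish the distinguished triangle by first upgrading it to a short exact sequence at the level of sheaves (not in the derived sense) and then transferring the result to $D^b(X)$ using exactness of the four functors involved.

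First I would observe that $j$ being an open immersion and $i$ a closed immersion implies that $j_!$ and $i_* = i_!$ are exact functors between abelian categories of sheaves (cf.\ the discussion of $f_!$ before \cref{prop:proper_base_change}), while $j^*$ and $i^*$ are exact as inverse images along continuous maps. Consequently, the derived functors $j_! j^*$ and $i_* i^*$ are computed termwise on any bounded complex, and it suffices to exhibit, for each sheaf $\mathcal{F} \in \Sh(X)$, a natural short exact sequence
\[ 0 \to j_! j^* \mathcal{F} \to \mathcal{F} \to i_* i^* \mathcal{F} \to 0 \]
with the prescribed unit and counit as its maps. Applied degreewise to a bounded complex representing an object of $D^b(X)$, this produces a short exact sequence of complexes, which yields the desired distinguished triangle.

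Next I would verify the exactness of the sheaf sequence by checking it on stalks, using that exactness of sheaves is detected stalk-wise. For a point $x \in U$, the open immersion property gives $(j_! j^* \mathcal{F})_x = (j^* \mathcal{F})_x = \mathcal{F}_x$, while $(i_* i^* \mathcal{F})_x = 0$ since $x \notin Z = i(Z)$ and $i$ is closed (so $i_*$ has vanishing stalks off $Z$). For a point $x \in Z$, we have $(j_! j^* \mathcal{F})_x = 0$ by the definition of extension by zero, while $(i_* i^* \mathcal{F})_x = (i^* \mathcal{F})_{i^{-1}(x)} = \mathcal{F}_x$. In both cases the stalk sequence reduces to either $0 \to 0 \to \mathcal{F}_x \to \mathcal{F}_x \to 0$ or $0 \to \mathcal{F}_x \to \mathcal{F}_x \to 0 \to 0$, which are trivially exact. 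Naturality in $\mathcal{F}$ and the identification of the two middle maps with the counit of $j_! \dashv j^*$ and the unit of $i^* \dashv i_*$ are built into the pointwise description of these adjunctions.

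Finally, applying this construction degreewise to a bounded complex representing $\mathcal{F} \in D^b(X)$ yields a short exact sequence of complexes $0 \to j_! j^* \mathcal{F} \to \mathcal{F} \to i_* i^* \mathcal{F} \to 0$, which produces the distinguished triangle in $D^b(X)$ by the standard mechanism of the triangulated structure on the derived category.

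The main obstacle is the stalk computation, and in particular being careful about the description of $j_! \mathcal{G}$ for $\mathcal{G} \in \Sh(U)$ via the definition of $f_!$ recalled in the excerpt: one must check that the condition that $f$ be proper on the support of a section forces the stalk of $j_! j^* \mathcal{F}$ to vanish on $Z$, which here follows because any nonempty support meeting $Z$ fails properness of $j$ restricted to the support (as $j$ has non-closed image near $Z$). Once this is pinned down, the remainder is bookkeeping.
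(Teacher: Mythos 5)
The paper itself offers no proof of this proposition — it simply cites Kashiwara--Schapira \cite[(2.6.33)]{KashiwaraSchapira1990} — so there is no paper argument to compare against. Your proposal is the standard derivation and is essentially the one found in that reference: since $j$ is an open immersion and $i$ a closed immersion, all four functors $j_!$, $j^*$, $i_*$, $i^*$ are exact at the sheaf level, so the derived triangle is obtained by exhibiting a natural short exact sequence $0 \to j_!j^*\mathcal{F} \to \mathcal{F} \to i_*i^*\mathcal{F} \to 0$ for a single sheaf, checking exactness on stalks, and applying it termwise.

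One small imprecision worth tightening: your explanation of why $(j_!j^*\mathcal{F})_x = 0$ for $x \in Z$ is worded in a way that does not quite work. The support of a section $s$ of $j^*\mathcal{F}$ over $V \cap U$ is by definition a subset of $U$, so it can never "meet $Z$." The correct line of reasoning is: properness of $j$ restricted to $\operatorname{supp}(s)$ means that $\operatorname{supp}(s)$ is \emph{closed in $V$} (an injective continuous map of locally compact Hausdorff spaces is proper iff it is a closed embedding). Since $x \in Z$ lies outside $\operatorname{supp}(s) \subseteq U$ and $\operatorname{supp}(s)$ is closed in $V$, the open set $V \setminus \operatorname{supp}(s)$ contains $x$ and $s$ restricts to zero there, so the germ of $s$ at $x$ vanishes. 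This is the actual content of the phrase "fails properness near $Z$" that you gestured at; the rest of the argument is correct as written.
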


\begin{remark}
    Since the category $\Sh(X)$ of sheaves of $\KK$-modules on a topological space $X$ is a Grothendieck abelian category, the six derived functors $f^*, f_*, f_!, f^!, \otimes$ and $\iHom$ are also well-defined on the unbounded derived category $D(X) \coloneqq D(\Sh(X))$.
\end{remark}

\subsection{Six-functor formalisms}

We wish to encode the above functors in the framework of a $6$-functor formalism. To do this, we follow the approach of \cite[Appendix A.5]{Mann2022} and \cite{Scholze2022} for the definitions of a $3$- and $6$-functor formalism. 
Consider the following definitions.

\begin{definition}
    A \emph{geometric setup} is a pair $(\mathcal{C}, E)$ consisting of an ($\infty$-)category $\mathcal{C}$ with finite limits and a collection of morphisms $E$ stable under pullback and composition containing all isomorphisms.
\end{definition}

\begin{definition}
    Let $(\mathcal{C}, E)$ be a geometric setup. Denote by $\Corr(\mathcal{C}, E)$ the \emph{$\infty$-category of correspondences} as defined in \cite[Definition 3.2]{Scholze2022}. In particular, the objects of $\Corr(\mathcal{C}, E)$ are the objects of $\mathcal{C}$, and the $1$-cells from $X$ to $Y$ are given by \emph{correspondences}, that is, diagrams $X \xleftarrow{f} Z \xrightarrow{g} Y$ in $\mathcal{C}$ with $g \in E$. The $2$-cells are given by diagrams
    \[ \begin{tikzcd}[row sep=0em] && W'' \arrow{dl} \arrow{dr} && \\ & W \arrow{dl} \arrow{dr} && W' \arrow{dl} \arrow{dr} & \\ X && Y && Z \end{tikzcd} \]
    with all arrows going down-right being in $E$ and the middle square being cartesian. In particular, we can think of $X \leftarrow W'' \rightarrow Z$ as the composite of $X \leftarrow W \rightarrow Y$ and $Y \leftarrow W'' \rightarrow Z$. The $\infty$-category $\Corr(\mathcal{C}, E)$ admits a symmetric monoidal structure given by \cite[Definition 3.11]{Scholze2022} induced by the cartesian symmetric monoidal structure on $\mathcal{C}$.
\end{definition}

\begin{definition}[{\cite[Definition 3.1]{Scholze2022}}]
    Let $(\mathcal{C}, E)$ be a geometric setup. A \emph{$3$-functor formalism} on $(\mathcal{C}, E)$ is a lax symmetric monoidal functor
    \[ D \colon \Corr(\mathcal{C}, E) \to \Cat_\infty . \]
\end{definition}

    Note that any morphism $f \colon X \to Y$ in $\mathcal{C}$ induces correspondences $Y \xleftarrow{f} X \xrightarrow{\id_X} X$ and $X \xleftarrow{\id_X} X \xrightarrow{f} Y$ (provided $f \in E$) between $X$ and $Y$. Given a $3$-functor formalism $D$, we write
    \[ f^* \defequality D\big(Y \xleftarrow{f} X \xrightarrow{\id_X} X\big) \quad \textup{ and } \quad f_! \defequality D\big(X \xleftarrow{\id_X} X \xrightarrow{f} Y\big) . \]
    Any general correspondence $X \xleftarrow{f} Z \xrightarrow{g} Y$ is isomorphic to the composite of $(Z \xleftarrow{\id_Z} Z \xrightarrow{g} Y)$ and $(X \xleftarrow{f} Z \xrightarrow{\id_Z} Z)$, so that $D(X \xleftarrow{f} Z \xrightarrow{g} Y) \cong g_! f^*$. For any space $X$, the lax monoidal structure of $D$ and pullback along the diagonal of $X$ define a tensor product on $D(X)$,
    \[ \otimes \colon D(X) \times D(X) \to D(X \times X) \xrightarrow{\Delta_X^*} D(X) \]
    equipping the categories $D(X)$ with a symmetric monoidal structure (the unit object of $D(X)$ is obtained via the composite of the natural functor $1 \to D(1)$ and pullback $D(1) \xrightarrow{\pi^*} D(X)$ along the final morphism $X \xrightarrow{\pi} 1$). Now, it follows formally that the natural transformation $D(X) \times D(Y) \to D(X \times Y)$ for any $X$ and $Y$ is given by the \emph{external tensor product}
    \[ \boxtimes \colon D(X) \times D(Y) \to D(X \times Y), \quad A \boxtimes B = \pi_X^* A \otimes \pi_Y^* B . \]

\begin{definition}[{\cite[Definition A.5.7]{Mann2022}}]
    A \emph{$6$-functor formalism} is a $3$-functor formalism $D \colon \Corr(\mathcal{C}, E) \to \Cat_\infty$ such that each symmetric monoidal $\infty$-category $D(X)$ is closed and all the functors $f^* \colon D(Y) \to D(X)$ and $f_! \colon D(X) \to D(Y)$ admit right adjoints, which are denoted by $f_*$ and $f^!$, respectively.
\end{definition}

\begin{example}
    All compatibilities between the six functors turn out to be encoded in the above definitions. For instance, the proper base change theorem, \cref{prop:proper_base_change} is shown as follows: for any diagram as in \eqref{eq:proper_base_change_diagram}, the statement follows by applying $D$ to the isomorphism of correspondences
    \[ \big( X \xleftarrow{g'} X \times_Z Y \xrightarrow{f'} Y \big) \cong \big( Z \xleftarrow{g} Y \xrightarrow{\id_Y} Y \big) \circ \big( X \xleftarrow{\id_X} X \xrightarrow{f} Z \big) . \]
    See \cite[Proposition A.5.8]{Mann2022} for more compatibilities.
\end{example}

The category $\mathcal{C} = \LCH$ of locally compact Hausdorff spaces, together with the class $E$ of all morphisms, forms a geometric setup. From now on, the notation $D(X)$ refers to the $\infty$-categorical analogue of the derived category of sheaves on $X$ \cite[Lecture VII]{Scholze2022}.

\begin{theorem}[{\cite[Theorem 7.4]{Scholze2022}}]
    The assignment $X \mapsto D(X)$ for locally compact Hausdorff spaces extends to a $6$-functor formalism
    \[ D \colon \Corr(\LCH, E) \to \Cat_\infty \]
    where $E$ contains all morphisms.
\end{theorem}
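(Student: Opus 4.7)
The plan is to invoke the abstract machinery for constructing $6$-functor formalisms developed in \cite[Appendix A.5]{Mann2022}. This machinery reduces the problem to providing classical $1$-categorical input on two restricted classes of morphisms together with a handful of verifiable compatibilities, and then formally promotes this data to a fully coherent lax symmetric monoidal functor on $\Corr(\LCH, E)$.

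First, for each locally compact Hausdorff space $X$, we would model $D(X)$ as a stable, presentable, closed symmetric monoidal $\infty$-category (for instance as the dg-nerve of complexes of sheaves of $\KK$-modules on $X$, localized at quasi-isomorphisms), with monoidal structure coming from the derived tensor product $\otimes$ and internal Hom $\iHom$. The pullback $f^*$ is already exact at the level of sheaves, so it lifts to all morphisms $f$ and is easily made functorial in $f$, giving a symmetric monoidal functor $\LCH^{\textup{op}} \to \Cat_\infty$. Next, we define $f_!$ on two restricted classes: for open immersions $j$ we set $j_!$ to be extension by zero, and for proper maps $f$ we set $f_! = f_*$.

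Now we would invoke the abstract gluing theorem (e.g.\ \cite[Proposition A.5.14]{Mann2022}), which outputs the desired $3$-functor formalism $D \colon \Corr(\LCH, E) \to \Cat_\infty$ once one checks the following on the restricted classes above. Any morphism $f \colon X \to Y$ in $\LCH$ factors as $\bar f \circ j$ with $j$ an open immersion and $\bar f$ proper (for instance by embedding $X$ as an open subspace of a compactification mapping properly to $Y$), and the two restricted classes are stable under pullback in $\LCH$. One then has to check the proper base change isomorphism and the projection formula relating $f^*$ with $g_!$ when $g$ is proper or an open immersion, together with the compatibility of the two definitions of $g_!$ on their intersection (isomorphisms). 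These are all classical facts, recorded in \cite[Propositions 2.5.11, 2.6.6, 2.6.7]{KashiwaraSchapira1990}. Finally, since each $D(X)$ is presentable and each $f^*$, $f_!$ and $({-})\otimes({-})$ preserves colimits, the adjoint functor theorem produces the right adjoints $f_*$, $f^!$ and $\iHom$, upgrading the $3$-functor formalism to a $6$-functor formalism.

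The main obstacle is not any single classical compatibility --- all of these are recorded in \cite{KashiwaraSchapira1990} --- but rather the packaging of these compatibilities as coherent higher-categorical data on $\Corr(\LCH, E)$, a simplicial set that encodes arbitrarily complicated pasting diagrams of correspondences. Verifying such coherences by hand would require manipulating infinite towers of higher homotopies; the entire purpose of the Liu--Zheng / Mann abstract gluing machinery is to isolate the finite amount of $1$-categorical input just enumerated and to carry out, once and for all, the $\infty$-categorical bookkeeping on our behalf.
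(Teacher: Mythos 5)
The paper does not supply its own argument for this statement; it is cited directly from Scholze's lecture notes, so there is no in-paper proof to compare against. That said, your sketch does track the actual construction in \cite{Scholze2022}: build $f^*$ on all maps, build $f_!$ separately on open immersions (extension by zero) and on proper maps ($f_! = f_*$), verify base change, the projection formula, and the Beck--Chevalley compatibilities on these restricted classes, factor an arbitrary $f$ as a proper map after an open immersion, and feed the whole package into the Liu--Zheng/Mann gluing theorem.

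There is, however, a genuine gap at the final step. You assert that the upgrade from a $3$-functor to a $6$-functor formalism is immediate from presentability because each of $f^*$, $f_!$ and $\otimes$ preserves colimits. For $f^*$ and $\otimes$ this is routine, and for $j_!$ with $j$ an open immersion it is automatic since $j_!$ is a left adjoint. But for a proper map $p$ you have \emph{defined} $p_! = p_*$, which is a priori a right adjoint, and the claim that $p_*$ preserves all colimits --- which is exactly what you need to manufacture $p^!$ via the adjoint functor theorem --- is a substantive theorem, not a formality. Its proof requires proper base change together with a compactness argument (stalks of $p_*$ are computed on compact fibers, and cohomology on a compactum commutes with filtered colimits) and a bound on cohomological dimension; this is the topological input behind Verdier duality, and you have quietly absorbed it into the word ``preserves.'' Two smaller imprecisions: the compactification you want is a \emph{relative} one over $Y$, obtained by taking the closure of the graph of $f$ inside $X^+ \times Y$ with $X^+$ the one-point compactification of $X$, not an absolute compactification of $X$; and the intersection of the classes of open immersions and proper maps consists of clopen inclusions, not merely isomorphisms, so the two definitions of $({-})_!$ must be reconciled on that slightly larger class.
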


For the purposes of this paper, this $6$-functor formalism must be extended to topological stacks. Denote by $\TopStck$ the category of stacks of locally compact Hausdorff spaces, with respect to the topology given by open subsets. Following \cite[Appendix to Lecture IV]{Scholze2022}, we find such an extension is possible.

\begin{theorem}[{\cite[Theorem 4.20]{Scholze2022}}]
    \label{prop:D_six_functor_formalism}
    The assignments $X \mapsto D(X)$ and $(X \xleftarrow{f} Z \xrightarrow{g} Y) \mapsto (g_! f^* \colon D(X) \to D(Y))$ for locally compact Hausdorff spaces extend to a $6$-functor formalism
    \[ D \colon \Corr(\TopStck, \tilde{E}) \to \Cat_\infty . \]
\end{theorem}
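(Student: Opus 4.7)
The plan is to deduce this from the general extension machinery for $6$-functor formalisms developed in \cite[Appendix to Lecture IV]{Scholze2022}. Taking as input the $6$-functor formalism on $\LCH$ from \cite[Theorem 7.4]{Scholze2022}, this machinery produces an extension to stacks provided the input satisfies descent for a suitable Grothendieck topology. The relevant topology here is the open-cover topology on $\LCH$, whose $2$-category of stacks is precisely $\TopStck$, so it suffices to verify descent and then apply the abstract extension result.

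The key technical input is descent: for every open cover $\{U_i \hookrightarrow X\}_{i \in I}$ of a space $X \in \LCH$ with \v{C}ech nerve $U_\bullet$, the canonical functor
\[ D(X) \longrightarrow \lim D(U_\bullet) \]
should be an equivalence of $\infty$-categories, and this equivalence should interact correctly with both $f^*$ and $f_!$ for arbitrary morphisms $f$. At the level of the underlying abelian category of sheaves this is classical; the promotion to the $\infty$-categorical derived category $D(X)$ follows from hyperdescent for sheaves on locally compact Hausdorff spaces, which in turn rests on the fact that open covers are universal effective epimorphisms in $\LCH$ and that $D(X)$ is hypercomplete.

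Once descent is established, the extension is formal. One defines $D(\mathcal{X}) \defequality \lim_{U \to \mathcal{X}} D(U)$ for a stack $\mathcal{X}$, where the limit ranges over maps from representable spaces in $\LCH$, and induces the six functors on stacks from their values on $\LCH$ via this limit presentation. The class $\tilde{E}$ of admissible morphisms of stacks consists of those $f \colon \mathcal{Y} \to \mathcal{X}$ whose pullback along any map from a space in $\LCH$ lies in $E$; stability under pullback and composition is then automatic, and on representables $\tilde{E}$ restricts to $E$ (which here is all morphisms).

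The main obstacle is the descent step itself: one must verify not just the equivalence of $\infty$-categories above, but also that the six functors intertwine correctly with the descent datum, which in the formalism of correspondences amounts to the compatibilities encoded by proper base change (\cref{prop:proper_base_change}) and the projection formula. All of these compatibilities are encapsulated in the machinery of \cite{Scholze2022, Mann2022}, so at this level of the exposition the proof reduces to directly invoking the cited theorem of Scholze.
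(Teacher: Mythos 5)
The paper offers no proof here; the statement is a verbatim citation of Scholze's Theorem~4.20, and your sketch is a partial reconstruction of the machinery behind it. Your overall architecture is right in outline: right Kan extension along Yoneda, together with open-cover descent on $\LCH$ to ensure the extension agrees with $D$ on representables. The concrete gap is in your proposed definition of $\tilde{E}$. You take $\tilde{E}$ to consist of morphisms $f \colon \mathcal{Y} \to \mathcal{X}$ in $\TopStck$ whose base change along every $U \to \mathcal{X}$ with $U \in \LCH$ again lies in $\LCH$ --- that is, the class of \emph{representable} morphisms. But the paper needs $f_!$ for non-representable morphisms of stacks: \cref{ex:group_homology_and_cohomology_SU2} computes $\pi_! \uQQ$ for $\pi \colon BG \to *$, whose fiber over the point is $BG$ itself and not a space, so $\pi$ would fall outside your $\tilde{E}$. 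The same issue arises every time the field theory is applied to a cap bordism, since $F_G\left(\bdcounit[0.8]\right)$ is a correspondence whose right leg is $BG \to *$; with your $\tilde{E}$ the quantization of $\Sigma_g$ as a closed bordism would not be defined.

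Scholze's $\tilde{E}$ is deliberately larger: one declares $f \in \tilde{E}$ if, for every $U \to \mathcal{X}$ from a space, the stack $\mathcal{Y} \times_{\mathcal{X}} U$ admits an atlas $V \to \mathcal{Y} \times_{\mathcal{X}} U$ by a space with $V \to U$ in $E$ such that the \v{C}ech nerve of the atlas satisfies $!$-descent. One then \emph{defines} $f_!$ by gluing along this nerve, and must check that the result is independent of the chosen atlas, satisfies proper base change and the projection formula, and assembles into a lax symmetric monoidal functor out of $\Corr(\TopStck, \tilde{E})$. That construction and those verifications are the substantive content of Theorem~4.20, which your sketch defers entirely to the citation; that is fair, but the specific $\tilde{E}$ you state is too small to support the applications made later in this paper. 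The remainder of your outline --- hyperdescent, and the limit presentation of $D(\mathcal{X})$ over maps from spaces --- is consistent with Scholze's approach.
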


\begin{example}[{\cite{MO471768}}]
    \label{ex:group_homology_and_cohomology_SU2}
    Consider the quotient stack $BG = * / G$ for $G = \SU(2)$ acting trivially on a point, and let $q \colon * \to BG$ be the quotient map and $\pi \colon BG \to *$ the terminal map. 
    The direct image $\pi_* \uQQ = H^*(BG; \QQ)$ corresponds to the group cohomology of $G$ and can be computed as $H^*(\mathbb{HP}^\infty; \QQ) = \bigoplus_{n \ge 0} \QQ[-4]$, where $\mathbb{HP}^\infty$ is the classifying space of $G = \SU(2)$. In contrast, $\pi_! \uQQ$ can be computed as follows. Since $q$ is cohomologically smooth (it pulls back to the smooth map $G \to *$), and the composite $\pi q = \id$ is smooth, also $\pi$ is cohomologically smooth. Hence, $\pi^!$ equals $\pi^*$ up to a twist. To find this twist, note that $q^! \pi^! \QQ = (\pi q)^! = \QQ$ and also $q^! \pi^! \QQ = q^* \pi^! \QQ \otimes q^! \QQ$. Now, $q^!$ shifts by $\dim G$ to the left, so $f^! \QQ$ sit in cohomological degree $\dim G$. In particular, in that degree sits a $1$-dimensional representation of $G$ on a $\QQ$-vector space, but as $G$ is connected, this representation must be trivial, and hence $f^! \QQ = \QQ[-\dim G]$. Finally, $\pi_!$, being left adjoint to $\pi^! = \pi^*[- \dim G]$, is therefore given by group homology shifted by $\dim G = 3$ to the left. In particular, $\pi_! \uQQ = \bigoplus_{n \ge 0} \QQ[4n + 3]$.

    Note that unbounded complexes, both to the left and to the right, are a necessary consequence when passing to stacks.
\end{example}

\section{TQFT of sheaves}

In this section, we define the TQFT of sheaves as a lax monoidal functor from the $\infty$-category of bordisms to the $\infty$-category of $\infty$-categories. This functor will be constructed as the composite of two functors
\[ \begin{tikzcd} \Bord_n \arrow{r}{F_G} & \Corr(\TopStck, \tilde{E}) \arrow{r}{D} & \Cat_\infty \end{tikzcd} \]
the latter of which is the functor $D$ of \cref{prop:D_six_functor_formalism}. Let us first define the category of bordisms.

\begin{definition}
    Let $n \ge 1$. The \emph{bicategory of $n$-dimensional bordisms}, denoted $\Bord_n$, is given as follows.
    \begin{itemize}
        \item The objects are closed oriented $(n - 1)$-dimensional manifolds.
        \item Given two objects $M_1$ and $M_2$, a $1$-morphism from $M_1$ to $M_2$ is a \emph{bordism} from $M_1$ to $M_2$, that is, a compact oriented $n$-dimensional manifold with boundary $W$ together with an orientation-preserving diffeomorphism $\partial W \cong \overline{M_1} \sqcup M_2$, where $\overline{M_1}$ denotes the manifold $M_1$ with opposite orientation. 
        \item The $2$-morphisms between bordisms are diffeomorphisms that respect the boundaries.
        \item Given two bordisms $W \colon M_1 \to M_2$ and $W' \colon M_2 \to M_3$, the composite $W' \circ W$ is given by the gluing $W'' = W' \coprod_{M_2} W$, which we equip with a smooth structure such that the inclusions $W \to W''$ and $W' \to W''$ are diffeomorphisms onto their images. By \cite[Theorem 1.4]{Milnor1965}, such a smooth structure exists up to (non-unique) diffeomorphism.
        \item For any object $M$, the identity morphism $\id_M$ is given by the cylinder $W = M \times [0, 1]$.
    \end{itemize}
    The bicategory $\Bord_n$ can naturally be regarded an $\infty$-category via the \textit{Duskin nerve} \cite{Duskin2001}, and we denote this $\infty$-category by $\Bord_n$ as well. Furthermore, $\Bord_n$ admits a symmetric monoidal structure given by taking disjoint unions of manifolds.
\end{definition}

Since the objects of $\Bord_n$ are allowed to be non-connected, we must generalize the definitions of the $G$-representation variety and the $G$-character stack to such manifolds.

\begin{definition}
    Let $M$ be a connected compact manifold and $G$ be a locally compact topological group. Denote by $\Rep{G}{M}$ the \emph{$G$-representation variety} of $M$, as described in \cref{sec:introduction}, and by $\CharStck{G}{M}$ the \emph{$G$-character stack} of $M$, that is, the quotient stack $\Rep{G}{M} / G$ where $G$ acts on $\Rep{G}{M}$ by conjugation. For a general (not necessarily connected) compact manifold $M$, let
    \[ \Rep{G}{M} = \prod_{i \in \pi_0(M)} \Rep{G}{M_i} \quad \textup{ and } \quad \CharStck{G}{M} = \prod_{i \in \pi_0(M)} \CharStck{G}{M_i} \]
    where $M_i$ are the (finitely many) connected components of $M$. Note that $\CharStck{G}{M}$ is still the moduli stack of $G$-local systems on $M$, because to give a $G$-local system on $M$ is to give a $G$-local system on every $M_i$.
    Furthermore, note that the constructions of the $G$-representation variety and $G$-character stack are functorial: a smooth map of manifolds $M \to N$ induces morphisms $\Rep{G}{N} \to \Rep{G}{M}$ and $\CharStck{G}{N} \to \CharStck{G}{M}$, respecting composition.
\end{definition}

\begin{definition}
    Let $n \ge 1$. The \emph{field theory} for a locally compact topological group $G$ is the symmetric monoidal functor
    \[ F_G \colon \Bord_n \to \Corr(\TopStck, \tilde{E}) \]
    which sends an object $M$ to $\CharStck{G}{M}$, a bordism $W \colon M_1 \to M_2$ to the induced correspondence $\CharStck{G}{M_1} \leftarrow \CharStck{G}{W} \rightarrow \CharStck{G}{M_2}$, and a diffeomorphism $W \xrightarrow{\sim} W'$ between bordisms $W, W' \colon M_1 \to M_2$ to the induced $2$-cell
    \[ \begin{tikzcd}[row sep=0em] && \CharStck{G}{W \sqcup_{M_2} W'} \arrow{ld} \arrow{rd} && \\ & \CharStck{G}{W} \arrow{ld} \arrow{rd} && \CharStck{G}{W'} \arrow{ld} \arrow{rd} \\ \CharStck{G}{M_1} && \CharStck{G}{M_2} && \CharStck{G}{M_3} . \end{tikzcd} \]
    The middle square is indeed cartesian, since a $G$-local system on $W \sqcup_{M_2} W'$ is equivalent to a $G$-local system on $W$ and $W'$ together with an isomorphism between their restrictions to $M_2$. Furthermore, $F_G$ is indeed monoidal as $\mathfrak{X}_G$ sends disjoint unions to products.
\end{definition}

\begin{definition}
    Let $n \ge 1$. For any locally compact topological group $G$, define $Z_G \colon \Bord_n \to \Cat_\infty$ as the composite $D \circ F_G$ with $D$ as in \cref{prop:D_six_functor_formalism}.
\end{definition}

\begin{remark}
    Note that, for any bordism $W \colon M_1 \to M_2$, the functor $Z_G(W) \colon D(\CharStck{G}{M_1}) \to D(\CharStck{G}{M_2})$ produced by the TQFT is exact. 
\end{remark}

\begin{theorem}
    For any locally compact topological group $G$, the lax TQFT $Z_G$ quantizes the cohomology with compact support of the $G$-character stack. That is, for any closed $n$-dimensional manifold $W$, viewed as a bordism $\varnothing \to \varnothing$, the induced functor $Z_G(W) \colon D(*) \to D(*)$ is given by $(-) \otimes H_c^*(\CharStck{G}{W}; \KK)$.
\end{theorem}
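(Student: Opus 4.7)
The plan is to unpack the definition of $Z_G(W) = D(F_G(W))$ and then apply the projection formula. First I would observe that $\CharStck{G}{\varnothing}$ is an empty product of stacks, and hence the terminal object $\ast$ in $\TopStck$. Consequently, viewing the closed $n$-manifold $W$ as a bordism $\varnothing \to \varnothing$, the field theory produces the correspondence
\[ F_G(W) \;=\; \bigl(\ast \xleftarrow{\pi} \CharStck{G}{W} \xrightarrow{\pi} \ast\bigr), \]
where both legs coincide with the terminal map $\pi \colon \CharStck{G}{W} \to \ast$. Applying the $6$-functor formalism $D$ of \cref{prop:D_six_functor_formalism}, the induced functor is
\[ Z_G(W) \;=\; \pi_! \pi^* \colon D(\ast) \to D(\ast). \]

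Next I would compute this composite on an arbitrary object $A \in D(\ast)$. By the projection formula (which, as sketched in the example after the definition of $6$-functor formalism, is encoded in the lax monoidal structure of $D$), we have a natural isomorphism
\[ \pi_!\bigl(\underline{\KK} \otimes \pi^* A\bigr) \;\cong\; \pi_!\underline{\KK} \otimes A. \]
Since $\underline{\KK}$ is the monoidal unit of $D(\CharStck{G}{W})$, the left-hand side equals $\pi_! \pi^* A$. On the right, by the identification of $\pi_!$ with cohomology with compact support when $\pi$ is the terminal map, we have $\pi_! \underline{\KK} = H^*_c(\CharStck{G}{W}; \KK)$, viewed as an object of $D(\ast)$. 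Combining, we obtain $Z_G(W)(A) \cong H^*_c(\CharStck{G}{W}; \KK) \otimes A$, as desired.

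The only mild subtlety is ensuring that the projection formula and the identification $\pi_! \underline{\KK} = H^*_c(\CharStck{G}{W}; \KK)$ both hold in the stacky, $\infty$-categorical setting of \cref{prop:D_six_functor_formalism} rather than just for locally compact Hausdorff spaces. Both facts, however, are formal consequences of the axioms of a $6$-functor formalism and the extension to $\TopStck$ from \cite[Theorem 4.20]{Scholze2022}; no serious obstacle arises. In particular, the example at the end of the previous section (computing $\pi_! \underline{\QQ}$ for $BG$ with $G = \SU(2)$) illustrates that $\pi_!\underline{\KK}$ is the correct model of cohomology with compact support on topological stacks, justifying the final identification.
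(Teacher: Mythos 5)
Your proposal is correct and follows essentially the same route as the paper: identify $F_G(W)$ as the correspondence $\ast \leftarrow \CharStck{G}{W} \rightarrow \ast$ and apply $D$ to obtain $\pi_!\pi^*$. The only difference is that you spell out the last identification $\pi_!\pi^* \cong (-) \otimes H^*_c(\CharStck{G}{W};\KK)$ via the projection formula, a step the paper's proof states without elaboration.
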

\begin{proof}
    The correspondence $F_G(W)$ is given by
    \[ * = \CharStck{G}{\varnothing} \xleftarrow{\pi} \CharStck{G}{W} \xrightarrow{\pi} \CharStck{G}{\varnothing} = * \]
    and applying $D$ yields $Z_G(W) = \pi_! \pi^*$, which is precisely $(-) \otimes H_c^*(\CharStck{G}{W}; \KK)$.
\end{proof}


\subsection{TQFT for surfaces}

For the remainder of this paper, we will restrict to the case of surfaces, that is, dimension $n = 2$. By \cite[Proposition 1.4.13]{Kock2004}, the category $\Bord_2$ of $2$-dimensional bordisms is generated by the bordisms
\begin{equation*}
    \begin{gathered}
        \bdidentity \colon S^1 \to S^1, \quad \bdmultiplication[0.75] \colon S^1 \sqcup S^1 \to S^1, \quad \bdcomultiplication[0.75] \colon S^1 \to S^1 \sqcup S^1, \\[5pt] \bdcounit \colon S^1 \to \varnothing, \quad \bdunit \colon \varnothing \to S^1 \quad\!\! \textup{ and } \quad \bdtwist[0.75] \colon S^1 \sqcup S^1 \to S^1 \sqcup S^1
    \end{gathered}
\end{equation*}
in the sense that every morphism in $\Bord_2$ is ($2$-isomorphic to) the composite of disjoint unions of these bordisms.
In particular, one can understand the functor $Z_G$ by the images of these six generators.

\begin{proposition}
    \label{prop:field_theory_surfaces}
    The field theories for the generators of $\Bord_2$ are given by
    \begin{enumerate}[label=(\roman*)]
        \item $F_G \left( \bdidentity[0.6] \right) = \Big( G/G \xleftarrow{\id} G/G \xrightarrow{\id} G/G \Big)$,
        \item $F_G\left(\bdmultiplication[0.5]\right) = \Big( G/G \times G/G \xleftarrow{(\pi_1/G, \pi_2/G)} G^2/G \xrightarrow{m/G} G/G \Big)$ where $\pi_1, \pi_2 \colon G^2 \to G$ are the projections and $m \colon G^2 \to G$ is the multiplication map,
        \item $F_G\left(\bdcomultiplication[0.5]\right) = \Big( G/G \xleftarrow{m/G} G^2/G \xrightarrow{(\pi_1/G, \pi_2/G)} G/G \times G/G \Big)$,
        \item $F_G\left(\bdcounit[0.75]\right) = \Big(G/G \xleftarrow{i/G} BG \rightarrow * \Big)$ where $i \colon \{ 1 \} \to G$ the inclusion of the unit of $G$,
        \item $F_G \left(\bdunit[0.75] \right) = \Big(* \leftarrow BG \xrightarrow{i/G} G/G \Big)$,
    \end{enumerate}
    where $G$ acts by conjugation on itself and on $G^2$.
\end{proposition}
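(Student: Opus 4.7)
The plan is to verify each case by computing the fundamental group of the bordism and tracking how the boundary inclusions act on $G$-representations. Recall that for a connected manifold $M$ one has $\Rep{G}{M} = \Hom(\pi_1(M, *), G)$ and $\CharStck{G}{M} = \Rep{G}{M}/G$, and that a smooth map $M \to N$ induces the corresponding morphisms on representation varieties and character stacks by functoriality of $\pi_1$. Under the identification $\Rep{G}{S^1} \cong G$ via $\rho \mapsto \rho(\gamma)$ for a fixed generator $\gamma$ of $\pi_1(S^1)$, the conjugation action on representations becomes the conjugation action on $G$, so that $\CharStck{G}{S^1} = G/G$.

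Case (i) is immediate: the cylinder $W = S^1 \times [0,1]$ deformation retracts onto $S^1$, so $\pi_1(W) \cong \ZZ$ and both boundary inclusions induce the identity on $\pi_1$. Hence $\CharStck{G}{W} = G/G$ and both legs of the correspondence are the identity. For cases (iv) and (v), the disk $D^2$ is contractible, giving $\Rep{G}{D^2} = *$ and $\CharStck{G}{D^2} = BG$. The boundary inclusion $S^1 \hookrightarrow D^2$ kills $\pi_1(S^1)$, so after passing to $G$-quotients the induced morphism $BG \to G/G$ is precisely the one coming from $i \colon \{1\} \hookrightarrow G$.

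The only nontrivial computation is for cases (ii) and (iii). The pair of pants $P$ deformation retracts onto a wedge of two circles, so $\pi_1(P) \cong F_2 = \langle a, b\rangle$ and $\Rep{G}{P} \cong G^2$. Choosing a basepoint and compatible orientations on the boundary circles, the three boundary loops of $P$ satisfy a relation of the form $abc = 1$ in $\pi_1(P)$ --- since in any oriented surface with boundary, the product of the boundary loops based at a common point is trivial in $\pi_1$. Identifying two of the boundary circles with the generators $a$ and $b$, the third boundary circle then corresponds to the loop $(ab)^{-1}$, which, after reversing its orientation to match the $\Bord_2$ source/target convention, becomes the product $ab$. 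Under the identification $\Rep{G}{P} \cong G^2$, the inclusions of the three boundary circles therefore induce the projections $\pi_1, \pi_2$ and the multiplication $m$. Passing to $G$-quotients yields the stated correspondences, and (ii) and (iii) differ only in which boundary components are designated source versus target.

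The main subtlety lies in cases (ii) and (iii): one must carefully coordinate the $\Bord_2$ orientation convention --- whereby the source boundary of a bordism carries the reversed orientation relative to the boundary orientation induced from $P$ --- with the choice of generators of $\pi_1(P)$, so that the output circle really corresponds to $ab$ and not to, say, $ba$ or $(ab)^{-1}$. Since $G$ is not assumed abelian, this bookkeeping is essential; without it the resulting correspondence could differ by an inversion or a swap. Once this convention is pinned down and the $G$-equivariance of the boundary inclusions (with respect to the diagonal conjugation actions) is checked, assembling everything into the stated correspondences is purely formal.
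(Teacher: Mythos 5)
Your proof is correct and follows essentially the same approach as the paper: compute $\pi_1$ of the cylinder, disk, and pair of pants (sphere minus three points) to identify the character stacks as $G/G$, $BG$, and $G^2/G$, then track the boundary-inclusion morphisms, including the orientation bookkeeping that makes the third boundary loop of the pair of pants correspond to the multiplication $m$. The paper's own proof is identical in outline but delegates the verification of the boundary morphisms to \cite[Propositions 4.8.2 and 4.8.3]{Vogel2024}, whereas you sketch that argument directly.
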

\begin{proof}
    From the fundamental groups $\pi_1(S^1, *) = \ZZ$, $\pi_1(D^1, *) = 1$ and $\pi_1(S^2 \setminus \{ \textup{3 pts} \} = F_2$ (the free group on two generators), follows that
    \[ \CharStck{G}{S^1} = G/G, \quad \CharStck{G}{D^1} = BG \quad \textup{ and } \quad \CharStck{G}{S^2 \setminus \{ \textup{3 pts} \}} = G^2/G . \]
    The morphisms between these character stacks, induced by the inclusions of the boundaries of the bordisms, can be obtained completely analogous to \cite[Propositions 4.8.2 and 4.8.3]{Vogel2024}.
\end{proof}

To compute the cohomology of the $G$-character stacks of the closed orientable surfaces $\Sigma_g$ of genera $g$, we may express $\Sigma_g$ in terms of the above generators, and apply $Z_G$ functorially. For instance, from the expression $\Sigma_g = \bdcounit[0.75] \circ \left(\bdmultiplication[0.5] \circ \bdcomultiplication[0.5]\right)^g \circ \bdunit[0.75]$ we obtain
\[ Z_G \left( \Sigma_g \right) = Z_G \left( \bdcounit \right) \circ \left( Z_G \left( \bdmultiplication[0.75] \right) \circ Z_G \left( \bdcomultiplication[0.75] \right) \right)^{g} \circ Z_G \left( \bdunit \right) \]
for all $g \ge 0$. However, the following alternative expression for $\Sigma_g$ turns out to be more useful:
\[ \begin{tikzpicture}
    \node at (-0.5, 0) {$\Sigma_g$};
    \node at (0.125, 0) {$=$};
    \node at (0.825, 0) {$\bdcounit$};
    \node at (1.215, 0) {$\circ$};
    
    \node at (2.0, 0) {$\bdmultiplication$};
    \node at (2.715, 0.25) {$\circ$};
    \node at (2.715, -0.25) {$\circ$};
    \node at (3.0, 0.3) {$W$};
    
    \node at (3.5, -0.25) {$\bdmultiplication$};
    \node at (4.215, 0.0) {$\circ$};
    \node at (4.215, -0.5) {$\circ$};
    \node at (4.5, 0.05) {$W$};
    
    \node at (5.0, -0.50) {$\bdmultiplication$};
    \node at (5.715, -0.25) {$\circ$};
    \node at (5.715, -0.75) {$\circ$};
    \node at (6.0, -0.20) {$W$};

    \node at (6.215, -0.825) {\rotatebox{-9.46}{$\cdots$}};

    \node at (6.715, -0.9) {$\circ$};
    \node at (7.5, -0.9) {$\bdmultiplication$};
    \node at (8.215, -0.65) {$\circ$};
    \node at (8.215, -1.15) {$\circ$};
    \node at (8.5, -0.60) {$W$};
    \node at (8.5, -1.10) {$W$};
\end{tikzpicture} \]
with $W = \bdmultiplication[0.5] \circ \bdcomultiplication[0.5] \circ \bdunit[0.75]$ appearing $g$ times. In particular, denoting the `multiplication map' $Z_G \left( \bdmultiplication[0.5] \right) \circ \boxtimes \colon D(G/G) \times D(G/G) \to D(G/G)$ by $*$ and the object $Z_G \left( W \right) (\uKK) \in D(G/G)$ by $\mathcal{F}$, we have
\begin{equation}
    \label{eq:ZG_Sigma_g_from_convolution_mathcal_F}
    Z_G \left( \Sigma_g \right) (\uKK) = Z_G \left( \bdcounit \right) \Big( \underbrace{\; \mathcal{F} * \cdots * \mathcal{F}}_{g \textup{ times}} \Big) .
\end{equation}

\subsection{Representation varieties}

Even though the TQFT $Z_G$ computes the cohomology of the $G$-character stack, it is possible to adapt the functors produced by $Z_G$ to obtain functors that compute the cohomology of the $G$-representation variety instead.
Analogous to expression for $Z_G \left( \bdmultiplication[0.5] \right)$ obtained through \cref{prop:field_theory_surfaces}\textit{(ii)}, we make the following definition.

\begin{definition}
    Denote by $m \colon G \times G \to G$ the multiplication map, and define $\mu$ to be the (exact) functor
    \[ D^b(G) \times D^b(G) \to D^b(G), \quad (\mathcal{F}, \mathcal{G}) \mapsto m_! (\pi_1^* \mathcal{F} \otimes \pi_2^* \mathcal{G}) . \]
    Since $m$ is associative, so is $\mu$, that is, $\mu(\mathcal{F}, \mu(\mathcal{G}, \mathcal{H})) = \mu(\mu(\mathcal{F}, \mathcal{G}), \mathcal{H})$ for all $\mathcal{F}, \mathcal{G}, \mathcal{H} \in D^b(G)$.
\end{definition}

The equality of bordisms $\bdmultiplication[0.5] \raisebox{0.35em}{$\circ \hspace{0.1em} \bdunit[0.5]$} = \bdidentity[0.5]$ translates to the fact that $Z_G \left( \bdmultiplication[0.5] \right) (Z_G \left( \bdunit[0.75] \right) (\uKK) \boxtimes \mathcal{F}) = \mathcal{F}$ for all $\mathcal{F} \in D(G/G)$. Actually, as $Z_G \left( \bdunit[0.75] \right)(\uKK) = (i/G)_* \uKK$ with $i \colon \{ 1 \} \to G$ the inclusion of the unit of $G$ (see \cref{prop:field_theory_surfaces}\textit{(v)}), this merely amounts to the fact that multiplication by the unit is the identity. The analogous statement for $\mu$ is given by the following proposition.

\begin{proposition}
    \label{prop:multiplication_by_one}
    $\mu(i_* \uKK, \mathcal{F}) = \mathcal{F}$ for any $\mathcal{F}$ in $D^b(G)$.
\end{proposition}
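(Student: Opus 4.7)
The plan is to unfold the definition and push the computation through the standard six-functor identities: proper base change, the projection formula, and functoriality of $(-)_!$. Intuitively, $i_* \uKK$ is a "delta sheaf" at the identity of $G$, and convolution with a delta at $1$ should be the identity operation, so the proof is just a formal verification of this in the six-functor language.

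Concretely, I would begin by considering the cartesian square
\[ \begin{tikzcd}
    \{1\} \times G \arrow{r}{j} \arrow{d} & G \times G \arrow{d}{\pi_1} \\
    \{1\} \arrow{r}{i} & G
\end{tikzcd} \]
where $j = i \times \id_G$. Since $i$ is a closed immersion, we have $i_* = i_!$, and proper base change (\cref{prop:proper_base_change}) gives $\pi_1^* i_* \uKK \cong j_! \uKK$. Then, applying the projection formula to $j$ yields
\[ j_! \uKK \otimes \pi_2^* \mathcal{F} \;\cong\; j_!\bigl(\uKK \otimes j^* \pi_2^* \mathcal{F}\bigr) \;\cong\; j_!\bigl((\pi_2 \circ j)^* \mathcal{F}\bigr) . \]

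Now I would observe that the composite $\pi_2 \circ j \colon \{1\} \times G \to G$ is the canonical identification, so $(\pi_2 \circ j)^* \mathcal{F}$ is just $\mathcal{F}$ under this identification. Similarly, $m \circ j \colon \{1\} \times G \to G$ sends $(1, g) \mapsto g$, which is the same identification. Therefore, by functoriality of $(-)_!$,
\[ \mu(i_* \uKK, \mathcal{F}) \;=\; m_! j_! (\pi_2 \circ j)^* \mathcal{F} \;\cong\; (m \circ j)_! (\pi_2 \circ j)^* \mathcal{F} \;\cong\; \mathcal{F} , \]
since both $m \circ j$ and $\pi_2 \circ j$ agree with the same isomorphism $\{1\} \times G \xrightarrow{\sim} G$.

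There is no genuine obstacle here; the only point requiring care is making sure the base change square and the projection formula are applied in the correct order, and that the identifications $\{1\} \times G \cong G$ used in the final step are consistent (i.e.\ both $m\circ j$ and $\pi_2\circ j$ are equal as maps, not merely isomorphic through different identifications). Once this bookkeeping is done, the result follows purely formally from \cref{prop:proper_base_change}, the projection formula, and functoriality of lower-shriek, which are all available in the six-functor formalism of \cref{prop:D_six_functor_formalism}.
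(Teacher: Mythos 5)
Your proof is correct and essentially identical to the paper's: both use the cartesian square $\{1\} \times G \to G \times G$ over $\{1\} \to G$, apply proper base change to identify $\pi_1^* i_* \uKK$ with the extension-by-zero from $\{1\} \times G$, then use the projection formula and the observation that both $m$ and $\pi_2$ restrict to the canonical isomorphism $\{1\} \times G \cong G$. The only cosmetic difference is notation ($j$ vs.\ $i'$) and that you emphasize $i_* = i_!$ explicitly before invoking base change.
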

\begin{proof}
    From the cartesian square
    \[ \begin{tikzcd} \{ 1 \} \times G \arrow{r}{\pi'_1} \arrow{d}{i'} & \{ 1 \} \arrow{d}{i} \\ G \times G \arrow{r}{\pi_1} & G \end{tikzcd} \]
    follows that
    \begin{align*}
        \mu(i_* \uKK, \mathcal{F})
            &= m_!(\pi_1^* i_* \uKK \otimes \pi_2^* \mathcal{F}) \\
            &= m_!(i'_* \uKK \otimes \pi_2^* \mathcal{F}) \\
            &= m_!(i'_*(\uKK \otimes (i')^* \pi_2^* \mathcal{F})) \\
            &= m_! i'_* (i')^* \pi_2^* \mathcal{F} \\
            &= (m \circ i')_! (\pi_2 \circ i')^* \mathcal{F} \\
            &= \mathcal{F} ,
    \end{align*}
    where in the second equality we used $\pi_1^* i_* = i'_* (\pi'_1)^*$, in the third equality the projection formula, and in the fifth equality that $m \circ i'$ and $\pi_2 \circ i'$ are both equal to the isomorphism $\{ 1 \} \times G \to G$ which projects to the factor $G$.
\end{proof}

The following proposition describes multiplication with the constant sheaf on $G$.

\begin{proposition}
    \label{prop:multiplication_by_G}
    $\mu(\uKK, \mathcal{F}) = \pi^* \pi_! \mathcal{F}$ for any $\mathcal{F}$ in $D^b(G)$, where $\pi \colon G \to *$.
\end{proposition}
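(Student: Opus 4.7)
The plan is to reduce $\mu(\uKK, \mathcal{F})$ to a single functor application and then apply proper base change (\cref{prop:proper_base_change}) to an appropriate cartesian square involving $m$ and the group's terminal map.

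First I would simplify the expression. Since $\pi_1^* \uKK \cong \uKK$ (pullback of the constant sheaf is constant) and $\uKK$ is the unit of $\otimes$, we have
\[ \mu(\uKK, \mathcal{F}) = m_!\bigl(\pi_1^* \uKK \otimes \pi_2^* \mathcal{F}\bigr) = m_! \pi_2^* \mathcal{F} . \]
So the task reduces to proving $m_! \pi_2^* \mathcal{F} \cong \pi^* \pi_! \mathcal{F}$.

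Next I would identify the right cartesian square. Consider
\[ \begin{tikzcd} G \times G \arrow{r}{\pi_2} \arrow{d}{m} & G \arrow{d}{\pi} \\ G \arrow{r}{\pi} & * \end{tikzcd} \]
This square commutes trivially (both composites land in $*$). To show it is cartesian, I would exhibit the induced map $(m, \pi_2) \colon G \times G \to G \times_* G = G \times G$, $(a,b) \mapsto (ab, b)$, as an isomorphism, with explicit inverse $(c, b) \mapsto (cb^{-1}, b)$; here is exactly where the group structure of $G$ (and the existence of a continuous inverse, so that the formula defines a morphism in $\LCH$) is used.

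Finally, applying proper base change (\cref{prop:proper_base_change}) to this cartesian square yields the natural isomorphism of functors $\pi^* \pi_! \cong m_! \pi_2^*$, and combining this with the first step gives $\mu(\uKK, \mathcal{F}) \cong \pi^* \pi_! \mathcal{F}$, as desired. The only non-formal ingredient is verifying that the square above is cartesian, so that is the one step I would write out carefully; everything else is bookkeeping with the six functors.
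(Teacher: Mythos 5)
Your proof is correct and follows exactly the same route as the paper: reduce $\mu(\uKK,\mathcal{F})$ to $m_!\pi_2^*\mathcal{F}$ using that $\uKK$ is the tensor unit, then apply proper base change to the cartesian square formed by $m$, $\pi_2$, and the two terminal maps. The only difference is that you spell out the (easy) verification that the square is cartesian, which the paper leaves implicit.
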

\begin{proof}
    From the cartesian square
    \[ \begin{tikzcd}
        G \times G \arrow{d}{m} \arrow{r}{\pi_2} & G \arrow{d}{\pi} \\ G \arrow{r}{\pi} & *
    \end{tikzcd} \]
    follows that $\mu(\uKK, \mathcal{F}) = m_! (\pi_1^* \uKK \otimes \pi_2^* \mathcal{F}) = m_! \pi_2^* \mathcal{F} = \pi^* \pi_! \mathcal{F}$.
\end{proof}

Define the \emph{commutator map}
\[ c \colon G \times G \to G, \quad (g, h) \mapsto [g, h] = g h g^{-1} h^{-1} . \]
Analogous to \eqref{eq:ZG_Sigma_g_from_convolution_mathcal_F}, the following proposition expresses the cohomology of the $G$-representation variety of $\Sigma_g$ in terms of a repeated multiplication.

\begin{proposition}
    \label{prop:formula_cohomology_compact_support_as_pullback_of_convolution}
    For any $g \ge 0$, the cohomology with compact support of the $G$-representation variety of $\Sigma_g$ is given by
    \[ H_c^*(\Rep{G}{\Sigma_g}; \KK) = i^* \big( \underbrace{c_! \uKK * \cdots * c_! \uKK}_{g \textup{ times}} \big) , \]
    where $\mu(-, -)$ is shortened to $*$.
\end{proposition}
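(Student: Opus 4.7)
The plan is to reduce the statement to a single application of proper base change for the cartesian square that exhibits $\Rep{G}{\Sigma_g}$ as the fiber over $1 \in G$ of the word map
\[ \phi_g \colon G^{2g} \to G, \quad (a_1, b_1, \ldots, a_g, b_g) \mapsto [a_1, b_1] \cdots [a_g, b_g] . \]
The crux is therefore to identify the $g$-fold convolution $c_! \uKK * \cdots * c_! \uKK$ with the single pushforward $(\phi_g)_! \uKK$.

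First I would establish, by induction on $g$, the identity
\[ \underbrace{c_! \uKK * \cdots * c_! \uKK}_{g \textup{ times}} = (\phi_g)_! \uKK . \]
The key ingredient is a Künneth-type compatibility for $\mu$: for any maps $f, f' \colon X \to G$ and sheaves $\mathcal{A}, \mathcal{B}$ on $X$, one has
\[ \mu(f_! \mathcal{A}, f'_! \mathcal{B}) = (m \circ (f \times f'))_! (\mathcal{A} \boxtimes \mathcal{B}) , \]
which follows by unwinding the definition $\mu(-,-) = m_!(\pi_1^*(-) \otimes \pi_2^*(-))$, applying proper base change to rewrite $\pi_i^* f_!$ as an external pushforward, using the projection formula to combine the two tensor factors, and then functoriality $(g h)_! = g_! h_!$. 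Taking $\mathcal{A} = \mathcal{B} = \uKK$ and iterating (using associativity of $\mu$), each application of $\mu$ prepends a commutator factor and a multiplication, so after $g$ iterations one obtains $(m_g \circ c^{\times g})_! \uKK = (\phi_g)_! \uKK$, where $m_g \colon G^g \to G$ is iterated multiplication. The edge case $g = 0$ is covered by \cref{prop:multiplication_by_one}: the empty convolution is the unit $i_* \uKK$, and $\phi_0 \colon * \to G$ is precisely the inclusion of the identity, so $(\phi_0)_! \uKK = i_* \uKK$.

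Second, I would apply proper base change (\cref{prop:proper_base_change}) to the cartesian square
\[ \begin{tikzcd}
    \Rep{G}{\Sigma_g} \arrow{r} \arrow{d}{\pi} & G^{2g} \arrow{d}{\phi_g} \\
    \{ 1 \} \arrow{r}{i} & G
\end{tikzcd} \]
to obtain $i^* (\phi_g)_! \uKK = \pi_! \uKK$, where $\pi$ is the terminal map. Since $\pi_! \uKK = H_c^*(\Rep{G}{\Sigma_g}; \KK)$ by definition of cohomology with compact support, combining with the identification above yields the claim.

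The only step requiring care is the Künneth-type identity, since several base-change and projection-formula manipulations must be composed correctly; once this identity is in hand, the rest is a direct application of formalism. I do not expect conceptual obstacles beyond bookkeeping.
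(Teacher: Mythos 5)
Your proof is correct and follows essentially the same route as the paper's: the paper cites the single cartesian square realizing $\Rep{G}{\Sigma_g}$ as the fiber of $m_g \circ c^{\times g}$ over $1$, leaving the identification $c_! \uKK \ast \cdots \ast c_! \uKK = (m_g \circ c^{\times g})_! \uKK$ implicit, whereas you spell it out via the Künneth-type compatibility for $\mu$. Your added detail is sound and genuinely fills a gap the paper glosses over, but the argument is the same.
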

\begin{proof}
    The statement follows from \cref{prop:proper_base_change} and the cartesian square
    \[ \begin{tikzcd}[row sep=1.25em]
        \Rep{G}{\Sigma_g} \arrow{dd} \arrow{r} & G^{2g} \arrow{d}{c^g} \\
        & G^g \arrow{d}{m_g} \\
        \{ 1 \} \arrow{r}{i} & G
    \end{tikzcd} \]
    where $m_g \colon G^g \to G$ is multiplication of $g$ elements.
\end{proof}

\section{\texorpdfstring{$\SU(2)$}{SU(2)}-representation varieties}
\label{sec:SU2_representation_varieties}

Let us now apply the above theory to compute the cohomology of the $G$-representation variety for the compact group $G = \SU(2)$ of $2 \times 2$ special unitary matrices. Explicitly, this group can be presented as
\[ \SU(2) = \left\{ \begin{pmatrix} a + bi & c + di \\ -c + di & a - bi \end{pmatrix} \;\middle|\; a^2 + b^2 + c^2 + d^2 = 1 \right\} \cong S^3 . \]
For simplicity, we will compute the cohomology with coefficients in $\KK = \QQ$, to avoid dealing with torsion. In principal, with some extra care, one should be able to upgrade the computations in this section to obtain the cohomology with coefficients in $\ZZ$. 

Recall that our strategy is to compute $c_! \uQQ \in D^b(G)$, where
\[ c \colon G \times G \to G, \quad (A, B) \mapsto [A, B] \]
denotes the commutator map. That is, we want to decompose $c_! \uQQ$ into direct summands that are easier to understand. Then, computing $\mu$ on all pairs of those summands, we can obtain an explicit expression for $c_! \uQQ * \cdots * c_! \uQQ$ in $D^b(G)$.

\subsection{The commutator map}
\label{subsection:commutator_map}

The object $\mathcal{F} = c_! \uQQ$ in $D^b(G)$ is best understood through the distinguished triangle of \cref{prop:localization_distinguished_triangle},
\begin{equation}
    \label{eq:triangle_mathcal_F}
    j_! j^* \mathcal{F} \to \mathcal{F} \to i_* i^* \mathcal{F} \xrightarrow{+}
\end{equation}
where $i \colon \{ 1 \} \to G$ denotes the inclusion of the unit of $G$, and $j \colon G \setminus \{ 1 \} \to G$ the inclusion of its open complement.

\begin{proposition}
    \label{prop:commutator_at_identity}
    $i^* c_! \uQQ = \QQ[0] \oplus \QQ[-2] \oplus \QQ[-3]^{\oplus 2}$
\end{proposition}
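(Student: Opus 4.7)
The plan is to apply proper base change (\cref{prop:proper_base_change}) to the cartesian square
\[
\begin{tikzcd}
C \arrow{r} \arrow{d}{\pi} & G \times G \arrow{d}{c} \\
\{1\} \arrow{r}{i} & G
\end{tikzcd}
\]
in order to identify $i^* c_! \uQQ \cong \pi_! \uQQ = H_c^*(C; \QQ)$, where $C \defequality c^{-1}(1) = \{(A, B) \in G^2 : AB = BA\}$ is the commuting variety of $G = \SU(2)$. As $C$ is closed in the compact space $G \times G$, it is itself compact, so $H_c^* = H^*$, and the task reduces to showing $H^*(C; \QQ) \cong \QQ \oplus \QQ[-2] \oplus \QQ^{\oplus 2}[-3]$.

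Next I would fix the standard maximal torus $T \cong \U(1) \subset G$, with Weyl group $W = N_G(T)/T \cong \ZZ/2$, and consider the proper surjective conjugation map
\[
\phi \colon \tilde C \defequality G/T \times T \times T \to C, \qquad (gT, a, b) \mapsto (gag^{-1}, gbg^{-1}),
\]
whose source is homeomorphic to $S^2 \times S^1 \times S^1$ via $G/T \cong \mathbb{CP}^1 \cong S^2$. The map $\phi$ is invariant under the Weyl action $w \cdot (gT, a, b) = (gwT, w^{-1}aw, w^{-1}bw)$ and so factors as $\bar\phi \colon \tilde C/W \to C$. A fiber analysis, using that any commuting pair with at least one regular entry lies in a unique common maximal torus, shows $\bar\phi$ is a bijection outside the four central points $(\pm I, \pm I) \in Z(G)^2$, while the fiber of $\bar\phi$ over each such point is all of $(G/T)/W$.

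A direct calculation with the Hopf fibration shows that right-multiplication by the Weyl representative $w = \smatrix{0 & -1 \\ 1 & 0}$ acts on $G/T \cong S^2$ by the antipodal map. In particular, $W$ acts freely on $\tilde C$ and each exceptional fiber of $\bar\phi$ is homeomorphic to $\mathbb{RP}^2$. Because $H^*(\mathbb{RP}^2; \QQ)$ is concentrated in degree $0$, every stalk of the proper pushforward $R\bar\phi_* \uQQ$ is $\QQ$ in degree $0$, so $R\bar\phi_* \uQQ \cong \uQQ_C$ and hence $H^*(C; \QQ) \cong H^*(\tilde C/W; \QQ)$. The transfer isomorphism for the free $W$-action then gives $H^*(\tilde C/W; \QQ) \cong H^*(\tilde C; \QQ)^W$.

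Finally, Künneth gives $H^*(\tilde C; \QQ) \cong \QQ[x]/(x^2) \otimes \Lambda(\alpha, \beta)$ with $|x| = 2$ and $|\alpha| = |\beta| = 1$, and the nontrivial Weyl element acts by $x \mapsto -x$ (antipodal on $S^2$) together with $\alpha \mapsto -\alpha$, $\beta \mapsto -\beta$ (inversion on $T$). The $W$-invariant subspace is spanned by $1, \alpha\beta, x\alpha, x\beta$ in degrees $0, 2, 3, 3$ respectively, yielding the claimed decomposition. The main obstacle I anticipate is the fiber analysis of $\bar\phi$ together with the justification that $R\bar\phi_* \uQQ \cong \uQQ_C$, since this is precisely where the argument genuinely uses rational coefficients and where $\tilde C/W$ fails to coincide with $C$ on the nose.
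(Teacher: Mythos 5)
Your proposal is correct and takes essentially the same approach as the paper: base change to reduce to $H^*(C;\QQ)$ for the commuting variety $C$, resolve $C$ by the map from $(G/T \times T^2)/W$, observe that the exceptional fibers over $(\pm I, \pm I)$ are $\mathbb{RP}^2$ and hence rationally acyclic so the map is a $\QQ$-cohomology equivalence, and then compute $W$-invariants of $H^*(S^2 \times S^1 \times S^1;\QQ)$. The only cosmetic difference is that you invoke the transfer isomorphism for the (free) $W$-action where the paper cites Bredon's more general result on quotients by finite groups with rational coefficients; both yield the same $\QQ[0]\oplus\QQ[-2]\oplus\QQ[-3]^{\oplus 2}$.
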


\begin{proof}
    From the cartesian square
    \[ \begin{tikzcd} \{ 1 \} \times_G G^2 \arrow{d}{c'} \arrow{r}{i'} & G^2 \arrow{d}{c} \\ \{ 1 \} \arrow{r}{i} & G \end{tikzcd} \]
    follows that $i^* c_! \uQQ = c'_! (i')^* \uQQ = c'_! \uQQ = c'_* \uQQ$, where the last equality is due to $c$ and $c'$ being proper. Hence, we must compute the cohomology of the fiber
    \[ X \defequality \{ 1 \} \times_G G^2 \cong \big\{ (A, B) \in G^2 \mid AB = BA \big\} . \]
    There are many ways to do so. Note that any $A \in \SU(2)$ is diagonalizable and can be written as $A = P \smatrix{\alpha & 0 \\ 0 & \overline{\alpha}} P^{-1}$ for some $\alpha \in U(1)$ and $P \in \SU(2)$. When $A \ne \pm 1$, this expression is unique up to the transformation $(\alpha, P) \mapsto \big(\overline{\alpha}, P \smatrix{0 & 1 \\ 1 & 0}\big)$. Furthermore, the centralizer of any $A \ne \pm 1$ in $\SU(2)$ is the diagonal subgroup $\U(1) \subset \SU(2)$. In particular, we obtain a morphism
    \[ f \colon \big(\SU(2) / \U(1) \times \U(1)^2 \big) / S_2 \to X, \quad (P, \alpha, \beta) \mapsto (A = P \smatrix{\alpha & 0 \\ 0 & \overline{\alpha}} P^{-1}, B = P \smatrix{\beta & 0 \\ 0 & \overline{\beta}} P^{-1}) , \]
    where the group $S_2$ of order two acts on $\U(1)$ by complex conjugation, and on $\SU(2)/\U(1)$ by right multiplication with $\smatrix{0 & 1 \\ 1 & 0}$. Note that the fibers of $f$ are a single point, except for the fibers over $(A, B) = (\pm 1, \pm 1)$, where the fibers are $(\SU(2)/\U(1)) / S_2 \cong S^2 / S_2 \cong \mathbb{RP}^2$ (indeed, $S_2$ acts on $\SU(2)/\U(1) \cong S^2$ via the antipodal map, so the quotient is the real projective plane).
    This shows that the natural map $\uQQ \to f_* f^* \uQQ$ is an isomorphism: for any $x \in X$, the map on stalks $H^*(\{ x \}; \QQ) = (\uQQ)_x \to (f_* f^* \uQQ)_x = H^*(f^{-1}(\{ x \}); \QQ)$ is an isomorphism. Hence, $c'_* \uQQ = c'_* f_* \uQQ$, so it suffices to compute the cohomology of the domain of $f$ instead (this trick is known as the \textit{Vietoris--Begle theorem}, cf.\ \cite[Corollary 2.7.7]{KashiwaraSchapira1990}).

    Write $Y \defequality \SU(2) / \U(1) \times \U(1)^2$. To compute the cohomology of the domain of $f$, that is, of $Y / S_2$, we use \cite[Theorem II.19.2]{Bredon1997}, which states that the cohomology $H^*(Y/S_2; \QQ)$ of the quotient $Y/S_2$ is equal to the $S_2$-invariant subspace $H^*(Y; \QQ)^{S_2}$ of the cohomology of $Y$. 
    Since $S_2$ acts on $\U(1)$ by complex conjugation, it acts trivially on $H^0(\U(1); \QQ) = \QQ$ and non-trivially on $H^1(\U(1); \QQ) = \QQ$. Similarly, $S_2$ acts on $\SU(2)/\U(1) \cong S^2$ via the antipodal map, so it acts trivially on $H^0(\SU(2)/\U(1); \QQ) = \QQ$ and non-trivially on $H^2(\SU(2)/\U(1); \QQ) = \QQ$. It follows that
    \[ H^*(Y/S_2; \QQ) = \QQ[0] \oplus \QQ[2] \oplus \QQ[3]^{\oplus 2} . \]
\end{proof}

\begin{proposition}
    \label{prop:commutator_at_general}
    $j^* c_! \uQQ = \uQQ[0] \oplus \uQQ[3]$
\end{proposition}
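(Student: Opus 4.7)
The plan is to compute $j^* c_! \uQQ$ by analyzing the fibers of $c$ over $G \setminus \{I\}$. Since $G = \SU(2)$ is compact, so is $G \times G$, hence $c$ is proper and $c_! = c_*$. By proper base change (\cref{prop:proper_base_change}), the stalk of $c_! \uQQ$ at any $C \in G$ equals $H^*(c^{-1}(C); \QQ)$. The strategy is to show that every fiber over $G \setminus \{I\}$ is homeomorphic to $\SO(3)$, deduce that the cohomology sheaves of $j^* c_! \uQQ$ are constant, and then split the resulting two-term complex.

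\textbf{Fiber bundle structure.} The key ingredient is that the commutator map is a smooth submersion precisely off the commuting variety $c^{-1}(I)$: at $(A, B)$ with $[A, B] \ne I$, the subgroup $\langle A, B \rangle \subseteq \SU(2)$ is non-abelian, so the joint centralizer of $\{A, B\}$ collapses to $\{\pm I\}$, and a left-invariant computation of $dc_{(A, B)}$ then shows it has full rank $3$. Combined with properness of $c$, Ehresmann's fibration theorem yields that
\[ c \colon c^{-1}(G \setminus \{I\}) \longrightarrow G \setminus \{I\} \]
is a locally trivial smooth fiber bundle, so all fibers are diffeomorphic.

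\textbf{Computing one fiber.} To identify the common fiber, I would compute $c^{-1}(-I)$ directly. Under the identification $\SU(2) \cong S^3 \subset \mathbb{H}$, the equation $AB = -BA$ in the quaternion algebra forces both $A$ and $B$ to be unit pure imaginary quaternions $\vec a, \vec b$ with $\vec a \cdot \vec b = 0$ (the scalar parts must vanish and the vector parts must be orthogonal). Hence
\[ c^{-1}(-I) = \{(\vec a, \vec b) \in S^2 \times S^2 : \vec a \cdot \vec b = 0\} \]
is the unit tangent bundle of $S^2$, namely $\SO(3) \cong \mathbb{RP}^3$, whose rational cohomology is $\QQ$ concentrated in degrees $0$ and $3$. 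By the fibration above, every fiber $c^{-1}(C)$ with $C \ne I$ has the same cohomology.

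\textbf{From fibers to sheaves and splitting.} It follows that the cohomology sheaves $\mathcal{H}^k(j^* c_! \uQQ)$ are locally constant of rank one in degrees $0$ and $3$ and zero otherwise. Because $G \setminus \{I\} \cong \RR^3$ is contractible, these local systems are the constant sheaves $\uQQ$. The two-term complex then splits as $j^* c_! \uQQ \cong \uQQ \oplus \uQQ[-3]$ since
\[ \Ext^1_{D^b(G \setminus \{I\})}(\uQQ[-3], \uQQ) \cong H^4(G \setminus \{I\}; \QQ) = 0 . \]
The most delicate ingredient is the submersion statement; a more hands-on alternative would describe $c^{-1}(C)$ for each $C \ne I$ directly via $(A, B) \mapsto A$ as a principal $\U(1)$-bundle over the $2$-sphere $\{A \in \SU(2) : (C - 1) \perp A \text{ in } \RR^4\}$ and identify its Euler class as $\pm 2$ by continuity from the value at $C = -I$.
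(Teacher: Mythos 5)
Your route is genuinely different from the paper's and is correct in outline, though one step is only sketched. The paper proves triviality of the fiber bundle $c'\colon U\times_G G^2\to U$ directly: it uses $G$-equivariance to reduce to fibers over diagonal matrices, performs an explicit change of variables to identify each such fiber with the unit tangent bundle of $S^2$ (hence $\SO(3)$), and then concludes $c'_*\uQQ = \uQQ\boxtimes H^*(\SO(3);\QQ)$. You instead assert that $c$ is a submersion away from $c^{-1}(\{1\})$, invoke Ehresmann to get a (possibly nontrivial) fiber bundle, compute a single fiber $c^{-1}(-1)$ cleanly via quaternions, and then conclude by arguing that over a contractible base the cohomology sheaves are constant and the resulting two-term complex splits because $\Ext^1_{D^b(U)}(\uQQ[-3],\uQQ)=H^4(U;\QQ)=0$. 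Your quaternionic identification of $c^{-1}(-1)$ and the splitting-by-Ext step are slicker than the paper's matrix manipulations, and the contractibility argument elegantly avoids having to show global triviality of the bundle.

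The gap is in the submersion claim. You state that because the joint centralizer of $\{A,B\}$ is $\{\pm 1\}$, ``a left-invariant computation of $dc_{(A,B)}$ then shows it has full rank $3$,'' but this step is asserted rather than carried out, and the implication from discrete centralizer to surjective differential is not immediate: it requires identifying the cokernel of $dc_{(A,B)}$ with the Lie algebra of the joint centralizer (equivalently, the adjoint invariants of $\langle A,B\rangle$ on $\mathfrak{su}(2)$), which is essentially Goldman's computation. Since this is the hinge on which Ehresmann's theorem and hence your entire argument depends, it deserves either a reference or a short explicit calculation; as written, the hardest part of the proof is the part you have not done. The paper's explicit trivialization, while more laborious, carries no such debt.
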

\begin{proof}
    From the cartesian square
    \[ \begin{tikzcd} U \times_G G^2 \arrow{d}{c'} \arrow{r}{j'} & G^2 \arrow{d}{c} \\ U \arrow{r}{j} & G \end{tikzcd} \]
    follows that $j^* c_! \uQQ = c'_! (j')^* \uQQ = c'_! \uQQ$.
    We will show that $U \times_G G^2$ is isomorphic to the trivial fiber bundle $U \times \SO(3)$ over $U$, so that $c'_* \uQQ = \bigoplus_{n \ge 0} \uQQ \boxtimes H^n(\SO(3), \uQQ) = \uQQ[0] \oplus \uQQ[-3]$ as desired.

    Since $U \cong \RR^3$ is contractible, it suffices to show that $c' \colon U \times_G G^2 \to U$ is a fiber bundle with fiber $\SO(3)$.
    Since this map is equivariant with respect to the action of conjugation by $G$, it suffices to look at the fibers $\big\{ (A, B) \in G^2 \mid AB = gBA \big\}$ over the diagonal matrices $g = -\exp \smatrix{i \theta & 0 \\ 0 & - i \theta}$ with $\theta \in [0, \pi)$.
    Using a substitution of variables $A = A' \exp \smatrix{i \theta / 2 & 0 \\ 0 & - i \theta / 2}$ and $B = \exp \smatrix{-i \theta / 2 & 0 \\ 0 & i \theta / 2} B'$, the equation $AB = gBA$ reduces to
    \[ A' B' + \exp \smatrix{i \theta / 2 & 0 \\ 0 & - i \theta / 2} B' A' \exp \smatrix{i \theta / 2 & 0 \\ 0 & - i \theta / 2} = 0 . \]
    Looking at the coefficients of this matrix equation, we find
    \[ \begin{gathered}
        a x \cos \theta + a x - a y \sin \theta - b x \sin \theta - b y \cos \theta - b y \\
        + c w \sin \theta - c z \cos \theta - c z - d w \cos \theta - d w - d z \sin \theta = 0, \\
        ay + bx = 0 , \\
        az + cx = 0 , \\
        aw + dx = 0 .
    \end{gathered} \]
    Note that, if $a \ne 0$, we can solve for $(y, z, w) = - \tfrac{x}{a} (b, c, d)$ using the last three equations, after which the first equation reduces to $\tfrac{x}{a} (1 + \cos \theta) = 0$, that is, $x = 0$. But then $x = y = z = w = 0$, which is not a valid solution, so we must have $a = 0$. Since $(b, c, d) \ne (0, 0, 0)$, it follows from the last three equations that $x = 0$ as well.
    The first equation now reads
    \[ b y \cos \theta + b y - c w \sin \theta + c z \cos \theta + c z + d w \cos \theta + d w + d z \sin \theta = 0 . \]
    After a rotation of variables $\smatrix{z \\ w} = \smatrix{\cos \theta / 2 & \sin \theta / 2 \\ -\sin \theta / 2 & \cos \theta / 2} \smatrix{z' \\ w'}$, this equation reduces to
    \[ by (1 + \cos \theta) + 2 cz' \cos \tfrac{\theta}{2} + 2 dw' \cos \tfrac{\theta}{2} = 0 . \]
    Since $1 + \cos \theta$ and $\cos \tfrac{\theta}{2}$ are both positive for $\theta \in [0, \pi)$, this space is homeomorphic to the unit tangent bundle of $S^2$. Finally, the unit tangent bundle of $S^2$ is homeomorphic to $\SO(3)$ as both of them parametrize all oriented orthonormal frames of $\RR^3$. We conclude that $U \times_G G^2 \to U$ is indeed a fiber bundle with fiber $\SO(3)$.
\end{proof}

Returning to the distinguished triangle \eqref{eq:triangle_mathcal_F}, to properly understand $c_! \uQQ$ in terms of $i_* i^* c_! \uQQ$ and $j_! j^* c_! \uQQ$, we must understand the connecting morphism $i_* i^* c_! \uQQ \to j_! j^* c_! \uQQ[1]$. In particular, we must understand the Ext groups $\Ext_{\Sh(G)}^n(i_* \uQQ, j_! \uQQ) = \Hom_{D^b(G)}(i_* \uQQ, j_! \uQQ[n])$ between the `extension by zero' sheaves $i_* \uQQ$ and $j_! \uQQ$. Recall that these sheaves fit into a short exact sequence
\begin{equation}
    \label{eq:short_exact_sequence}
    0 \to j_! \uQQ \to \uQQ \to i_* \uQQ \to 0 .
\end{equation}

\begin{proposition}
    \label{prop:ext_groups_extensions_by_zero}
    For any $n \in \ZZ$, we have
    \[ \Hom_{D^b(G)}(i_* \uQQ, j_! \uQQ[n]) = \Ext_{\Sh(G)}^n(i_* \uQQ, j_! \uQQ[n]) = \begin{cases} \QQ & \textup{if } n \in \{ 1, 3 \}, \\ 0 & \textup{otherwise}. \end{cases} \]
\end{proposition}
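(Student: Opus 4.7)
The plan is to apply $\Hom_{D^b(G)}(i_* \uQQ, -)$ to the short exact sequence \eqref{eq:short_exact_sequence}, viewed as a distinguished triangle
\[ j_! \uQQ \to \uQQ \to i_* \uQQ \xrightarrow{+}, \]
and read off the Ext groups from the resulting long exact sequence. The computation then reduces to identifying the two auxiliary terms $\Hom_{D^b(G)}(i_* \uQQ, i_* \uQQ[n])$ and $\Hom_{D^b(G)}(i_* \uQQ, \uQQ[n])$.

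The first of these is formal: using $i^* i_* \cong \id$ for the closed immersion $i$ together with the $i^* \dashv i_*$ adjunction gives
\[ \Hom_{D^b(G)}(i_* \uQQ, i_* \uQQ[n]) \;\cong\; \Hom_{D^b(*)}(\uQQ, \uQQ[n]), \]
which is $\QQ$ in degree $0$ and zero otherwise. For the second, since $i$ is a closed immersion we have $i_* = i_!$, so the $i_! \dashv i^!$ adjunction gives
\[ \Hom_{D^b(G)}(i_* \uQQ, \uQQ[n]) \;\cong\; \Hom_{D^b(*)}(\uQQ, i^! \uQQ[n]). \]
The crucial input is the identification $i^! \uQQ \cong \uQQ[-3]$. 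This is the main content of the proof, and I would justify it exactly as in \cref{ex:group_homology_and_cohomology_SU2}: $G \cong S^3$ is a smooth orientable manifold of dimension $3$, so the terminal map $\pi \colon G \to *$ is cohomologically smooth with $\pi^! \uQQ \cong \uQQ[3]$; then functoriality applied to the factorization $\id_* = \pi \circ i$ forces $i^! \uQQ \cong \uQQ[-3]$. Hence the second group is $\QQ$ in degree $3$ and zero otherwise.

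Feeding these values into the long exact sequence and using that $\Hom(i_* \uQQ, j_! \uQQ[n])$ vanishes for $n \ll 0$ (all objects are bounded), a short diagram chase forces all but two of the groups to vanish: the connecting map $\Hom(i_* \uQQ, i_* \uQQ) \cong \QQ \to \Hom(i_* \uQQ, j_! \uQQ[1])$ must be an isomorphism, and the map $\Hom(i_* \uQQ, j_! \uQQ[3]) \to \Hom(i_* \uQQ, \uQQ[3]) \cong \QQ$ must be an isomorphism (flanked by zero groups on either side). This gives the two copies of $\QQ$ in the claimed degrees $1$ and $3$, and zero elsewhere. The sole non-formal step is the identification of $i^! \uQQ$; once that is in hand, the rest is bookkeeping.
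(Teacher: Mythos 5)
Your proof is correct, but it routes through the triangle in the opposite slot from the paper. You apply $\Hom_{D^b(G)}(i_* \uQQ, -)$ to $j_!\uQQ \to \uQQ \to i_*\uQQ \xrightarrow{+}$ and reduce to computing $\Hom(i_*\uQQ, i_*\uQQ[n])$ and $\Hom(i_*\uQQ, \uQQ[n])$; the paper instead applies $\Hom_{\Sh(G)}(-, j_!\uQQ)$ and reduces to $\Ext^n(j_!\uQQ, j_!\uQQ)$ and $\Ext^n(\uQQ, j_!\uQQ)$. Your auxiliary computations use $i^* i_* \cong \id$ with the $i^* \dashv i_*$ adjunction for the first term, and the $i_! \dashv i^!$ adjunction together with the identification $i^!\uQQ \cong \uQQ[-3]$ (via cohomological smoothness of $\pi \colon G \to *$ and $i^!\pi^! = \id$) for the second. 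The paper's version needs only the $j_! \dashv j^*$ adjunction, the identity $\Hom_{\Sh(G)}(\uQQ, -) = \pi_*$, and the elementary facts $H^*(U;\QQ) = \QQ[0]$ and $H^*_c(U;\QQ) = \QQ[-3]$ for $U \cong \RR^3$; it never touches $i^!$ or Verdier duality. Both are clean; yours makes the provenance of the degree-$3$ class more conceptual (it is literally the dualizing complex of a point inside the $3$-manifold $G$), while the paper's stays closer to elementary sheaf cohomology of $\RR^3$. One small presentational point: in your final diagram chase, rather than invoking a boundedness-for-$n \ll 0$ argument, you can simply observe that for each $n$ the group $\Hom(i_*\uQQ, j_!\uQQ[n])$ sits between $\Hom(i_*\uQQ, i_*\uQQ[n-1])$ and $\Hom(i_*\uQQ, \uQQ[n])$, both of which you have shown vanish unless $n \in \{1,3\}$, and in those two cases exactly one neighbor is $\QQ$ while the flanking terms beyond it vanish.
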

\begin{proof}
    Apply $\Hom_{\Sh(G)}(-, j_! \uQQ)$ to \eqref{eq:short_exact_sequence} to obtain a long exact sequence
    \[ \cdots \gets \Ext^n_{\Sh(G)}(j_! \uQQ, j_! \uQQ) \gets \Ext^n_{\Sh(G)}(\uQQ, j_! \uQQ) \gets \Ext^n_{\Sh(G)}(i_* \uQQ, j_! \uQQ) \gets \Ext^{n - 1}_{\Sh(G)}(j_! \uQQ, j_! \uQQ) \gets \cdots \]
    Using the adjunction $j_! \dashv j^*$, we find that
    \[ \Ext^n_{\Sh(G)}(j_! \uQQ, j_! \uQQ) = \Ext^n_{\Sh(U)}(\uQQ, \uQQ) = H^n(U; \QQ) = \begin{cases} \QQ & \textup{if } n = 0 , \\ 0 & \textup{otherwise}. \end{cases} \]
    Furthermore, since $\Hom_{\Sh(G)}(\uQQ, -) = \pi_*$ and $U \cong \RR^3$, we have
    \[ \Ext^n_{\Sh(G)}(\uQQ, j_! \uQQ) = H^n_c(U; \QQ) = \begin{cases} \QQ & \textup{if } n = 3 , \\ 0 & \textup{otherwise}. \end{cases} \]
    The result now follows from the long exact sequence.
\end{proof}

\begin{remark}
    \label{rem:delta_epsilon}
    The non-zero element in $\Ext_{\Sh(G)}^1(i_* \uQQ, j_! \uQQ) = \QQ$, which we will denote by $\delta$, corresponds to the extension \eqref{eq:short_exact_sequence}. Denote by $\varepsilon$ the non-zero element in $\Ext_{\Sh(G)}^3(i_* \uQQ, j_! \uQQ) = \QQ$. Note that, ultimately, the element $\varepsilon$ corresponds to the non-trivial cocycle in $H^3(G; \QQ) = \QQ$.
\end{remark}

\begin{proposition}
    \label{prop:pushforward_along_commutator}
    $c_! \uQQ = i_* \uQQ[-3]^{\oplus 2} \oplus j_! \uQQ[-3] \oplus \Cone\left(i_* \uQQ[-1] \oplus i_* \uQQ[-3] \xrightarrow{(\delta \; \varepsilon)} j_! \uQQ[0] \right)$
\end{proposition}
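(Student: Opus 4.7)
The plan is to analyze the localization triangle of \cref{prop:localization_distinguished_triangle} applied to $\mathcal{F} = c_!\uQQ$,
\[ j_!j^*c_!\uQQ \to c_!\uQQ \to i_*i^*c_!\uQQ \xrightarrow{\partial} j_!j^*c_!\uQQ[1], \]
and to determine the connecting morphism $\partial$ explicitly. Substituting the values from \cref{prop:commutator_at_identity,prop:commutator_at_general} and using \cref{prop:ext_groups_extensions_by_zero} to enumerate the potentially non-zero components, the morphism $\partial$ becomes a $2 \times 4$ block matrix
\[ \partial = \begin{pmatrix} \alpha\delta & \beta\varepsilon & 0 & 0 \\ 0 & 0 & \gamma_1\delta & \gamma_2\delta \end{pmatrix} \]
for scalars $\alpha, \beta, \gamma_1, \gamma_2 \in \QQ$, with columns corresponding to the summands $i_*\uQQ[0], i_*\uQQ[-2]$ and two copies of $i_*\uQQ[-3]$ of $i_*i^*c_!\uQQ$, and rows to the summands $j_!\uQQ[1], j_!\uQQ[-2]$ of $j_!j^*c_!\uQQ[1]$.

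The plan is to establish $\alpha, \beta \ne 0$ and $\gamma_1 = \gamma_2 = 0$. Granted this, after rescaling the source basis we may take $\alpha = \beta = 1$; then $\partial$ is block-diagonal, and under the identification $c_!\uQQ \cong \Cone(\partial[-1])$ the cone decomposes as a direct sum of three independent cones. Evaluating these (using $\Cone(A \xrightarrow{0} B) \cong A[1] \oplus B$) yields exactly $\Cone(i_*\uQQ[-1] \oplus i_*\uQQ[-3] \xrightarrow{(\delta,\varepsilon)} j_!\uQQ[0]) \oplus i_*\uQQ[-3]^{\oplus 2} \oplus j_!\uQQ[-3]$, matching the claim.

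The key input for determining the scalars is the computation
\[ \Hom(i_*\uQQ[k], c_!\uQQ) \cong H_{6-k}(X; \QQ), \qquad X \defequality c^{-1}(1), \]
obtained by combining proper base change $c^*i_*\uQQ \cong i'_*\uQQ_X$ (where $i' \colon X \hookrightarrow G^2$ is the inclusion of the fiber over $1$), the adjunction $i'_! \dashv (i')^!$, and the Verdier duality identity $(i')^!\uQQ_{G^2} \cong \omega_X[-6]$ arising from the orientation of $G^2$ as a smooth $6$-manifold. Since $H^*(X;\QQ) = \QQ[0] \oplus \QQ[-2] \oplus \QQ[-3]^{\oplus 2}$ by the proof of \cref{prop:commutator_at_identity}, Universal Coefficients over $\QQ$ gives $H_3(X) = \QQ^2$ and $H_4(X) = H_6(X) = 0$.

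Applying $\Hom(i_*\uQQ[k], -)$ to the localization triangle for $k \in \{0, -2, -3\}$ then yields three long exact sequences. From the first, $\Hom(i_*\uQQ, c_!\uQQ) = H_6(X) = 0$ forces $\alpha \ne 0$ (otherwise the $i_*\uQQ$ column of $\partial$ would vanish, splitting off $i_*\uQQ$ as a direct summand of $c_!\uQQ$ and contradicting the vanishing). Similarly, the second LES, using $H_4(X) = 0$, forces $\beta \ne 0$. The third LES, using $H_3(X) = \QQ^2$ together with $\beta \ne 0$ established above, forces $\gamma_1 = \gamma_2 = 0$ by a rank count on the connecting maps. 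The main technical obstacle will be carefully tracking the shifts to identify which scalar contributes to each connecting map in each LES; the Verdier duality formula $(i')^!\uQQ_{G^2} \cong \omega_X[-6]$ itself is a standard consequence of the dimension and orientation of $G^2$.
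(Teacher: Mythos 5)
Your proof is correct, and for the hardest part it takes a genuinely different route than the paper. For the entries $\varphi_{11} = \delta$ and $\varphi_{12} = \varepsilon$ (your $\alpha, \beta \ne 0$), the paper argues directly from $H^0(c_! \uQQ) = \uQQ$ and $H^2(G^2; \QQ) = 0$, respectively; your Verdier-duality computation of $\Hom(i_*\uQQ[k], c_!\uQQ)$ as a homology group of the fiber $X = c^{-1}(1)$ recovers both constraints in a single framework. The genuine divergence is in showing $\varphi_{23} = \varphi_{24} = 0$ (your $\gamma_1 = \gamma_2 = 0$): the paper passes to the quotient stack $G/G$, identifies $j^*\mathcal{F}$ as pulled back from $D(G/G)$ along $q_G$, and concludes by computing an equivariant $\Hom$ group to be zero; you instead compute $\Hom(i_*\uQQ[-3], c_!\uQQ) = H_3(X;\QQ) = \QQ^2$ and combine this with $\beta \ne 0$ in the long exact sequence — the term $\Hom(i_*\uQQ[-3], j_!j^*\mathcal{F}) = \QQ$ is killed by the previous connecting map (governed by $\beta$), leaving $\Hom(i_*\uQQ[-3], c_!\uQQ) \cong \ker(\gamma_1,\gamma_2)$, which then forces $\gamma_1 = \gamma_2 = 0$. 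Your argument is more uniform and avoids the equivariant derived category entirely, at the cost of invoking Verdier duality and tracking shifts carefully. One small correction: with the shifts $k \in \{0,-2,-3\}$ as you set them up, the duality identity should read $\Hom(i_*\uQQ[k], c_!\uQQ) \cong H_{6+k}(X;\QQ)$, not $H_{6-k}$; the values you actually use ($H_6 = H_4 = 0$, $H_3 = \QQ^2$) are the ones given by the corrected formula, so the argument is unaffected.
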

\begin{proof}
    Write $\mathcal{F} = c_! \uQQ$. The distinguished triangle \eqref{eq:triangle_mathcal_F} reduces, by \cref{prop:commutator_at_identity,prop:commutator_at_general}, after a shift to
    \[ i_* \uQQ[-1] \oplus i_* \uQQ[-3] \oplus i_* \uQQ[-4]^{\oplus 2} \xrightarrow{\varphi} j_! \uQQ[0] \oplus j_! \uQQ[-3] \to \mathcal{F} \xrightarrow{+} . \]
     It follows from \cref{prop:ext_groups_extensions_by_zero} that $\varphi$ must be of the form 
    \[ \varphi = \begin{pmatrix} \varphi_{11} & \varphi_{12} & 0 & 0 \\ 0 & 0 & \varphi_{23} & \varphi_{24} \end{pmatrix} . \]
    Note that $H^0(c_! \uQQ)$ is equal to the underived direct image $c_! \uQQ$, which is $\uQQ$, and hence $\varphi_{11} = \delta$. Since $H^2(\pi_* \mathcal{F}) = H^2(G^2; \QQ) = 0$, where $\pi \colon G \to *$ is the projection to a point, we must have $\varphi_{12} \ne 0$, so $\varphi_{12} = \varepsilon$.
    Finally, we claim that $\varphi_{23} = \varphi_{24} = 0$, which we show using the equivariant case. Following the proof of \cref{prop:commutator_at_general}, we see that $j^* \mathcal{F}$ corresponds to the derived direct image $(U \times \SO(3) \to G)_! \uQQ$. Note that $G$ acts on $\SO(3)$ by left multiplication (via the double cover $G = \SU(2) \xrightarrow{\rho} \SO(3)$), so that $j^* \mathcal{F}$ may also be identified with the pullback of $(U / \{ \pm 1 \} \xrightarrow{h} G \xrightarrow{q_G} G/G)_! \uQQ$ (where $\{ \pm 1 \} = \ker \rho$ acts trivially on $U$) along the quotient map $q_G \colon G \to G/G$. In particular, the morphisms $\varphi_{23}, \varphi_{24}$ can be identified with pullbacks of some morphisms $(i/G)_* \uQQ[-4] \to (q_G)_* h_! \uQQ$ along $q_G$. But, similar to \cref{prop:ext_groups_extensions_by_zero}, one shows that $\Hom_{D(G/G)}((i/G)_* \uQQ[-4], (q_G)_* h_! \uQQ) = \Hom_{D^b(G)}(i_* \uQQ, h_! \uQQ[4]) = 0$.
\end{proof}

\subsection{The multiplication map}
\label{subsection:multiplication_map}

For convenience, we will denote the $\Cone\Big(i_* \uQQ[-1] \oplus i_* \uQQ[-3] \xrightarrow{(\delta \; \varepsilon)} j_! \uQQ[0] \Big)$ by $\mathcal{E}$, so that $\mathcal{F} = c_! \uQQ = i_* \uQQ[3]^{\oplus 2} \oplus j_!\uQQ[-3] \oplus \mathcal{E}$.
Now, in order to compute the product $\mathcal{F} * \cdots * \mathcal{F}$, we must understand the products
\[ \mu(i_* \uQQ, i_* \uQQ), \quad \mu(i_* \uQQ, j_! \uQQ), \quad \mu(i_* \uQQ, \mathcal{E}) , \quad \mu(j_! \uQQ, j_! \uQQ), \quad \mu(j_! \uQQ, \mathcal{E}) \quad \textup{and} \quad \mu(\mathcal{E}, \mathcal{E}) . \]
For the first three of these, we can use \cref{prop:multiplication_by_one} (replacing the coefficients from $\ZZ$ to $\QQ$). To provide a good description for the fourth, we make the following definition.

\begin{definition}
    Define a sequence of objects $\mathcal{U}_n \in D^b(G)$ as follows. Put $\mathcal{U}_0 = i_* \uQQ$ and inductively define $\mathcal{U}_n = \mu(j_! \uQQ, \mathcal{U}_{n - 1})$ for all $n \ge 1$. In particular, $\mathcal{U}_1 = j_! \uQQ$.
\end{definition}

\begin{proposition}
    \label{prop:properties_mathcal_U}
    For every $n \ge 1$, we have
    \begin{enumerate}[label=(\roman*)]
        \item $\pi_* \mathcal{U}_n = \QQ[-3n]$
        \item $i^* \mathcal{U}_n = \bigoplus_{k = 0}^{n - 2} \QQ[-2k - n - 1]$
    \end{enumerate}
\end{proposition}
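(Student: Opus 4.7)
The plan is to first replace the inductive definition of $\mathcal{U}_n$ by an explicit geometric one. I claim $\mathcal{U}_n \cong (\phi_n)_! \uQQ$, where $\phi_n \colon U^n \to G$ is the iterated multiplication $(A_1, \ldots, A_n) \mapsto A_1 \cdots A_n$, with $\phi_0$ interpreted as the inclusion $i \colon \{1\} \to G$. The base case $n = 0$ is the definition $\mathcal{U}_0 = i_* \uQQ$. For the inductive step, I would unwind $\mathcal{U}_n = m_!(j_! \uQQ \boxtimes \mathcal{U}_{n-1})$ and combine proper base change (to commute $\pi_2^*$ past $(\phi_{n-1})_!$) with the projection formula (or equivalently, the Künneth-type identity $(f_! \mathcal{F}) \boxtimes (g_! \mathcal{G}) \cong (f \times g)_!(\mathcal{F} \boxtimes \mathcal{G})$) to rewrite $j_! \uQQ \boxtimes (\phi_{n-1})_! \uQQ$ as $(j \times \phi_{n-1})_! \uQQ$. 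Applying $m_!$ and observing $m \circ (j \times \phi_{n-1}) = \phi_n$ closes the induction.

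Granted this description, part \textit{(i)} is essentially Künneth. Since $G = \SU(2)$ is compact, $\pi_*$ agrees with $\pi_!$ on $D^b(G)$, so
\[ \pi_* \mathcal{U}_n \;\cong\; (\pi \circ \phi_n)_! \uQQ \;\cong\; H_c^*(U^n; \QQ) \;\cong\; H_c^*(U;\QQ)^{\otimes n} \;\cong\; \QQ[-3n] , \]
using $U \cong \RR^3$.

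For part \textit{(ii)}, proper base change applied to the pullback of $\phi_n$ along $i \colon \{1\} \to G$ yields $i^* \mathcal{U}_n \cong H_c^*(X_n; \QQ)$, where $X_n \defequality \phi_n^{-1}(1) = \{ (A_1, \ldots, A_n) \in U^n : A_1 \cdots A_n = 1 \}$. The geometric key is that solving $A_n = (A_1 \cdots A_{n-1})^{-1}$ identifies $X_n$ with the open subset $U^{n-1} \setminus X_{n-1}$ of $U^{n-1}$, since the requirement $A_n \in U$ translates to $A_1 \cdots A_{n-1} \ne 1$. The associated localization triangle (\cref{prop:localization_distinguished_triangle}) for the decomposition $U^{n-1} = X_n \sqcup X_{n-1}$ then produces
\[ \cdots \to H_c^{d-1}(X_{n-1}) \to H_c^d(X_n) \to H_c^d(U^{n-1}) \to H_c^d(X_{n-1}) \to \cdots . \]

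The argument concludes by induction on $n$ using this sequence. The inductive hypothesis places $H_c^*(X_{n-1})$ in degrees $\le 3n - 6$, while $H_c^*(U^{n-1}) \cong \QQ[-(3n-3)]$ is concentrated in the single degree $3n - 3$; these ranges are disjoint, so every map between them vanishes for degree reasons and the long exact sequence collapses to split short exact sequences
\[ 0 \to H_c^{d-1}(X_{n-1}) \to H_c^d(X_n) \to H_c^d(U^{n-1}) \to 0 . \]
Reindexing the resulting direct sum and comparing with the base case $X_1 = \varnothing$ reproduces the formula $\bigoplus_{k=0}^{n-2} \QQ[-2k-n-1]$. The step I expect to require the most care is the identification $\mathcal{U}_n \cong (\phi_n)_!\uQQ$, a somewhat delicate bookkeeping exercise with base change and the projection formula; once this is in place, everything reduces to a transparent cohomology computation on $U^n$ and its relatives $X_n$, where the vanishing of connecting morphisms is essentially automatic.
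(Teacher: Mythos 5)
Your proof is correct, and the key vanishing argument (the connecting morphism dies because the degree of $\QQ[-3(n-1)]$ exceeds all degrees appearing in $H_c^*(X_{n-1})$) is precisely the same degree count that closes the paper's inductive step. The route you take to the triangle is different, though. The paper applies $\mu(-, \mathcal{U}_{n-1})$ to the short exact sequence $0 \to j_! \uQQ \to \uQQ \to i_* \uQQ \to 0$ and invokes the already-proved identities $\mu(i_*\uQQ, -) = \id$ and $\mu(\uQQ, -) = \pi^*\pi_!$; pulling back along $i$ then produces a triangle on the point with terms $i^*\mathcal{U}_n$, $\QQ[-3(n-1)]$, and $i^*\mathcal{U}_{n-1}$. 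You instead establish the geometric description $\mathcal{U}_n \cong (\phi_n)_!\uQQ$, identify $i^*\mathcal{U}_n$ with $H_c^*(X_n)$ by proper base change, and get the same long exact sequence from the decomposition $U^{n-1} = X_n \sqcup X_{n-1}$. Unwinding shows these two triangles agree; yours is the paper's triangle fully base-changed down to a point and made geometrically explicit. The trade-off: the paper's version requires no extra bookkeeping (the Künneth-type identification $\mathcal{U}_n = (\phi_n)_!\uQQ$ is exactly what you flag as the delicate step), while your version makes the vanishing of the connecting maps feel almost tautological, and the auxiliary fact $\mathcal{U}_n = (\phi_n)_!\uQQ$ is anyway implicitly used in the paper's one-line proof of part \textit{(i)}.

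Two small cautions. First, to get the split short exact sequences you need \emph{two} maps to vanish in the long exact sequence --- both $H_c^{d}(U^{n-1}) \to H_c^{d}(X_{n-1})$ and $H_c^{d-1}(U^{n-1}) \to H_c^{d-1}(X_{n-1})$ --- and your degree argument in fact kills both, but it is worth saying so. Second, your reference to \cref{prop:localization_distinguished_triangle} is for the decomposition of $U^{n-1}$, not $G$, which is fine since $U^{n-1}$ is still a locally compact Hausdorff space; just be explicit that the triangle is applied there.
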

\begin{proof}
    Statement \textit{(i)} follows from the fact that $\pi_* \mathcal{U}_n = H^*_c(U^n; \QQ) = \QQ[-3n]$. We prove \textit{(ii)} by induction on $n$, the case $n = 1$ being $i^* \mathcal{U}_1 = i^* j_! \uQQ = 0$. For $n \ge 2$, we can understand $\mathcal{U}_n = \mu(j_! \uQQ, \mathcal{U}_{n - 1})$ via the triangle obtained by applying $\mu(-, \mathcal{U}_{n - 1})$ to \eqref{eq:short_exact_sequence}:
    \[ \begin{tikzcd}[row sep=0.5em]
        \mu(\uQQ, \mathcal{U}_{n - 1})[-1] \arrow{r} & \mu(i_* \uQQ, \mathcal{U}_{n - 1})[-1] \arrow{r} & \mu(j_! \uQQ, \mathcal{U}_{n - 1}) \arrow{r}{+} & \hphantom{} \\
        \pi^* \pi_* \mathcal{U}_{n - 1} = \uQQ[-3 n + 2] \arrow[equals]{u} & \mathcal{U}_{n - 1}[-1] \arrow[equals]{u} & \mathcal{U}_n \arrow[equals]{u} &
    \end{tikzcd} \]
    Applying $i^*$ to this triangle, we find that
    \[ i^* \mathcal{U}_n = \Cone \left( \QQ[-3n + 2] \to \bigoplus_{k = 0}^{n - 3} \QQ[-2k - n - 1] \right) = \bigoplus_{k = 0}^{n - 2} \QQ[-2k - n - 1] . \qedhere \]
\end{proof}

\begin{definition}
    Define a sequence of objects $\mathcal{E}_n \in D^b(G)$ as follows. Put $\mathcal{E}_0 = i_* \uQQ[0]$ and inductively define $\mathcal{E}_n = \mu(\mathcal{E}, \mathcal{E}_{n - 1})$ for all $n \ge 1$. In particular, $\mathcal{E}_1 = \mathcal{E}$.
\end{definition}


\begin{proposition}
    \label{prop:properties_mathcal_E}
    For every $n \ge 0$, we have
    \begin{enumerate}[label=(\roman*)]
        \item $\pi_* \mathcal{E}_n = \QQ[0]$ 
        \item $i^* \mathcal{E}_n = \bigoplus_{k = 0}^{n} \QQ[-2k]$
    \end{enumerate}
\end{proposition}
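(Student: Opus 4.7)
The plan is to induct on $n$ after first simplifying the description of $\mathcal{E}$ itself. Since the short exact sequence $0 \to j_! \uQQ \to \uQQ \to i_* \uQQ \to 0$ identifies $\Cone(\delta[-1]\colon i_* \uQQ[-1] \to j_! \uQQ) \cong \uQQ$, applying the octahedral axiom to the composite
\[ i_* \uQQ[-1] \hookrightarrow i_* \uQQ[-1] \oplus i_* \uQQ[-3] \xrightarrow{(\delta[-1],\, \varepsilon[-3])} j_! \uQQ \]
should yield a cleaner triangle $i_* \uQQ[-3] \xrightarrow{\alpha} \uQQ \to \mathcal{E} \xrightarrow{+}$ with $\alpha \in \Ext^3(i_* \uQQ, \uQQ) \cong \QQ$. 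The class $\alpha$ is necessarily nonzero, since otherwise $\pi_* \mathcal{E}$ would pick up extra $\QQ[-3]$ contributions incompatible with the Künneth calculation below.

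Given this simpler form, (i) follows immediately from a Künneth-type formula. The projection formula and proper base change applied to $\mu(\mathcal{F}, \mathcal{G}) = m_!(\mathcal{F} \boxtimes \mathcal{G})$ together with $\pi \circ m = \pi_G \times \pi_G$ give $\pi_! \mu(\mathcal{F}, \mathcal{G}) \cong \pi_! \mathcal{F} \otimes \pi_! \mathcal{G}$. Iterating, $\pi_* \mathcal{E}_n = (\pi_* \mathcal{E})^{\otimes n} \otimes \pi_* i_* \uQQ$, using $\pi_! = \pi_*$ since $G$ is compact. The factor $\pi_* \mathcal{E} = \QQ[0]$ can be read off from $\mathcal{F} = i_* \uQQ[-3]^{\oplus 2} \oplus j_! \uQQ[-3] \oplus \mathcal{E}$ and $\pi_* \mathcal{F} = \QQ \oplus \QQ[-3]^{\oplus 2} \oplus \QQ[-6]$, yielding $\pi_* \mathcal{E}_n = \QQ[0]$.

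For (ii), I induct on $n$, with the base case $n=0$ immediate from $\mathcal{E}_0 = i_* \uQQ$. Applying $\mu(-, \mathcal{E}_{n-1})$ to the simplified triangle and invoking \cref{prop:multiplication_by_one} and \cref{prop:multiplication_by_G} together with (i) produces
\[ \mathcal{E}_{n-1}[-3] \to \uQQ \to \mathcal{E}_n \xrightarrow{+}. \]
After applying $i^*$, the source $\bigoplus_{k=0}^{n-1} \QQ[-2k-3]$ sits in strictly positive degrees, so for degree reasons the map to $\QQ[0]$ vanishes, giving $i^* \mathcal{E}_n = \QQ[0] \oplus \bigoplus_{k=0}^{n-1} \QQ[-2k-2] = \bigoplus_{k=0}^{n} \QQ[-2k]$. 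The main obstacle I expect is the octahedral simplification itself: one must carefully track the three cones and the connecting maps, and justify that $\alpha$ coincides (up to scalar) with the generator of $\Ext^3(i_* \uQQ, \uQQ)$. An alternative path avoiding the simplification would work directly from the triangle $\mathcal{E}_{n-1}[-1] \oplus \mathcal{E}_{n-1}[-3] \to \mu(j_! \uQQ, \mathcal{E}_{n-1}) \to \mathcal{E}_n$, but this forces one to separately verify nonvanishing of certain connecting maps on $H^0$, which is noticeably less transparent.
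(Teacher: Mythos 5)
Your proof is correct. For part (ii) you take essentially the same route as the paper: apply $\mu(-, \mathcal{E}_{n-1})$ to the triangle $i_*\uQQ[-3] \to \uQQ \to \mathcal{E} \xrightarrow{+}$ (the paper's \eqref{eq:triangle_for_mathcal_E}), then apply $i^*$ and observe the connecting map vanishes for degree reasons. Your octahedral derivation of \eqref{eq:triangle_for_mathcal_E} is a worthwhile supplement, since the paper records that triangle without derivation; your identification $\Cone(\delta[-1]) \cong \uQQ$ and the resulting octahedron is the intended argument. For part (i), however, your K\"unneth-style computation is a genuinely different route. The paper applies $\pi_*$ to the same triangle and asserts the map $\QQ[-3] \to \QQ[0]\oplus\QQ[-3]$ is $\smatrix{0\\1}$, but the nonvanishing of the second component is left to the reader (it follows, ultimately, from the nontriviality of $\varepsilon$). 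Your argument sidesteps this entirely: the identity $\pi_!\mu(\mathcal{F},\mathcal{G}) \cong \pi_!\mathcal{F} \otimes \pi_!\mathcal{G}$ follows from projection formula and proper base change, and $\pi_*\mathcal{E} = \QQ[0]$ is read off from $\pi_*\mathcal{F} = H^*(G^2;\QQ)$ and the known decomposition of $\mathcal{F}$, so $\pi_*\mathcal{E}_n = (\pi_*\mathcal{E})^{\otimes n} \otimes \pi_*i_*\uQQ = \QQ[0]$ with no need to determine the cone map. This is cleaner and more robust. One small remark: your phrasing ``$\alpha$ is necessarily nonzero, since otherwise $\pi_*\mathcal{E}$ would pick up extra contributions incompatible with the K\"unneth calculation below'' suggests a circularity, but in fact none of (i) or (ii) as you argue them actually uses $\alpha \ne 0$ (the triangle exists regardless, and the degree argument in (ii) is insensitive to the value of $\alpha$), so that remark is a side observation rather than a load-bearing step; it would be cleaner to state the value of $\pi_*\mathcal{E}$ first and then deduce $\alpha \ne 0$ as a corollary if you want to record it.
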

\begin{proof}
    We prove these statements by induction on $n$. For $n = 0$, they follow from the definition $\mathcal{E}_0 = i_* \uQQ$. Let us now prove the statements for $n \ge 1$ assuming the statements hold for $n - 1$. Apply $\mu(-, \mathcal{E}_{n - 1})$ to the triangle
    \begin{equation}
        \label{eq:triangle_for_mathcal_E}
        i_* \uQQ[-3] \xrightarrow{\varepsilon} \uQQ \to \mathcal{E} \xrightarrow{+}
    \end{equation}
    to obtain, using \cref{prop:multiplication_by_one} and \cref{prop:multiplication_by_G}, the distinguished triangle
    \[ \begin{tikzcd}[row sep=0.5em]
        \mu(i_* \uQQ[-3], \mathcal{E}_{n - 1}) \arrow{r} & \mu(\uQQ, \mathcal{E}_{n - 1}) \arrow{r} & \mu(\mathcal{E}, \mathcal{E}_{n - 1}) \arrow{r}{+} & \hphantom{} \\
        \mathcal{E}_{n - 1}[-3] \arrow[equals]{u} & \pi^* \pi_* \mathcal{E}_{n - 1} = \uQQ \arrow[equals]{u} & \mathcal{E}_n \arrow[equals]{u} &
    \end{tikzcd} \]
    Applying $\pi_*$ to this triangle, we find 
    \[ \pi_* \mathcal{E}_n = \Cone \left( \QQ[-3] \xrightarrow{\smatrix{0 \\ 1}} \QQ[0] \oplus \QQ[-3] \right) = \QQ[0] . \]
    Similarly, applying $i^*$ to the triangle gives
    \[ i^* \mathcal{E}_n = \Cone \left( \bigoplus_{k = 0}^{n - 1} \QQ[-2k - 3] \to \QQ[0] \right) = \bigoplus_{k = 0}^{n} \QQ[-2k] . \qedhere \]
\end{proof}

\begin{lemma}
    \label{lemma:mu_mathcal_E_U}
    $\mu(\mathcal{E}, j_! \uQQ) = i_* \uQQ[-3]$
\end{lemma}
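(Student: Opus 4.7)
The plan is to apply $\mu(-, j_!\uQQ)$ to the distinguished triangle \eqref{eq:triangle_for_mathcal_E}, namely $i_*\uQQ[-3] \xrightarrow{\varepsilon} \uQQ \to \mathcal{E} \xrightarrow{+}$, and identify the resulting two terms together with the connecting morphism. The symmetric analogue of \cref{prop:multiplication_by_one} gives $\mu(i_*\uQQ[-3], j_!\uQQ) = j_!\uQQ[-3]$, and \cref{prop:multiplication_by_G} combined with $\pi_! j_!\uQQ = H^*_c(U; \QQ) = H^*_c(\RR^3; \QQ) = \QQ[-3]$ gives $\mu(\uQQ, j_!\uQQ) = \pi^*\pi_! j_!\uQQ = \uQQ[-3]$. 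Hence we obtain a distinguished triangle
\[ j_!\uQQ[-3] \xrightarrow{\varphi} \uQQ[-3] \to \mu(\mathcal{E}, j_!\uQQ) \xrightarrow{+} , \]
and the task reduces to identifying the morphism $\varphi$.

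Next I would determine $\varphi$ up to a scalar. Using the adjunction $j_! \dashv j^*$ (valid since $j$ is an open immersion, so $j^! = j^*$), an argument paralleling \cref{prop:ext_groups_extensions_by_zero} yields $\Hom_{D^b(G)}(j_!\uQQ, \uQQ) = H^0(U; \QQ) = \QQ$, generated by the natural inclusion $\iota \colon j_!\uQQ \to \uQQ$ of \eqref{eq:short_exact_sequence}. Therefore $\varphi = \lambda \cdot \iota[-3]$ for some scalar $\lambda \in \QQ$, and if $\lambda \ne 0$, the cone is $\Cone(\iota)[-3] = i_*\uQQ[-3]$, which is exactly the claimed answer.

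To show $\lambda \ne 0$, I would compute $\pi_*\mu(\mathcal{E}, j_!\uQQ)$ by an independent route and compare. Since $G$ is compact, $\pi_! = \pi_*$, so by successive uses of the projection formula and proper base change,
\[ \pi_*\mu(\mathcal{E}, j_!\uQQ) = \pi_! m_!(\pi_1^*\mathcal{E} \otimes \pi_2^* j_!\uQQ) = (\pi_!\mathcal{E}) \otimes (\pi_! j_!\uQQ) = \QQ \otimes \QQ[-3] = \QQ[-3], \]
using $\pi_!\mathcal{E} = \pi_*\mathcal{E} = \QQ$ from \cref{prop:properties_mathcal_E}(i) with $n = 1$. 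On the other hand, applying $\pi_*$ to the triangle above gives $\pi_* j_!\uQQ[-3] = \QQ[-6]$ and $\pi_*\uQQ[-3] = \QQ[-3] \oplus \QQ[-6]$; the case $\lambda = 0$ would then produce an extra $\QQ[-5]$ summand in $\pi_*\mu(\mathcal{E}, j_!\uQQ)$, contradicting the Künneth calculation. Hence $\lambda \ne 0$, and the lemma follows. The main subtlety is exactly this last point: the Hom computation only pins down $\varphi$ up to a scalar, and ruling out the zero scalar requires the independent cohomological input from Künneth via the compactness of $G$.
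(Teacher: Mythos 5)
Your proof is correct and follows essentially the same route as the paper: apply $\mu(-, j_!\uQQ)$ to the triangle \eqref{eq:triangle_for_mathcal_E}, identify the first two terms as $j_!\uQQ[-3]$ and $\uQQ[-3]$ via \cref{prop:multiplication_by_one,prop:multiplication_by_G}, note $\Hom(j_!\uQQ, \uQQ) = \QQ$ so the connecting map is a scalar multiple of the canonical inclusion, and rule out the zero scalar by computing $\pi_*\mu(\mathcal{E}, j_!\uQQ) = \pi_*\mathcal{E} \otimes \pi_* j_!\uQQ = \QQ[-3]$ via the Künneth formula. Your write-up is slightly more explicit about the Künneth step and the contradiction that $\lambda = 0$ would create, but the argument is the same as the paper's.
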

\begin{proof}
    Apply $\mu(-, j_! \uQQ)$ to the triangle \eqref{eq:triangle_for_mathcal_E} to find, using \cref{prop:multiplication_by_one,prop:multiplication_by_G}, that
    \[ \begin{tikzcd}[row sep=0.5em]
        \mu(i_* \uQQ[-3], j_! \uQQ) \arrow{r}{\alpha} & \mu(\uQQ, j_! \uQQ) \arrow{r} & \mu(\mathcal{E}, j_! \uQQ) \arrow{r}{+} & \hphantom{} \\
        j_! \uQQ[-3] \arrow[equals]{u} & \pi^* \pi_* j_! \uQQ = \uQQ[-3] \arrow[equals]{u} & &
    \end{tikzcd} \]
    Since $\pi_* \mu(\mathcal{E}, j_! \uQQ) = \pi_* \mathcal{E} \otimes \pi_* j_! \uQQ = \QQ[-3]$, the morphism $\alpha \colon j_! \uQQ[-3] \to \uQQ[-3]$ must be a non-trivial element in $\Hom_{\Sh(G)}(j_! \uQQ, \uQQ) = \Hom_{\Sh(U)}(\uQQ, \uQQ) = H^0(U; \QQ) = \QQ$, which corresponds to the non-trivial extension \eqref{eq:short_exact_sequence}. Hence, $\mu(\mathcal{E}, j_! \uQQ) = \Cone \big( j_! \uQQ[-3] \xrightarrow{\alpha} \uQQ[-3] \big) = i_* \uQQ[-3]$ as desired.
\end{proof}

\begin{definition}
    Define a sequence of objects $\mathcal{F}_n \in D^b(G)$ as follows. Put $\mathcal{F}_0 = i_* \uQQ[0]$ and inductively define $\mathcal{F}_n = \mu(\mathcal{F}, \mathcal{F}_{n - 1})$ for all $n \ge 1$. In particular, $\mathcal{F}_1 = \mathcal{F}$. Note that, by construction, we have $\mathcal{F}_n = (c_n)_! \uQQ$ for $c_n \colon G^{2n} \to G$ given by $(A_1, B_1, \ldots, A_n, B_n) \mapsto [A_1, B_1] \cdots [A_n, B_n]$.
\end{definition}

\begin{proposition}
    \label{prop:explicit_expression_mathcal_Fn}
    For any $n \ge 1$, we have
    \[ \mathcal{F}_n = i_* \uQQ[-3n]^{\oplus \binom{2n}{n}} \oplus \bigoplus_{k = 1}^{n} \mathcal{U}_k[-3n]^{\oplus \binom{2n}{n - k}} \oplus \bigoplus_{k = 1}^{n} \mathcal{E}_k[-3(n - k)]^{\oplus \binom{2n}{n - k}} . \]
\end{proposition}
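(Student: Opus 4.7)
The plan is to induct on $n$, with the base case $n = 1$ reducing to \cref{prop:pushforward_along_commutator} via the identifications $\mathcal{U}_1 = j_! \uQQ$ and $\mathcal{E}_1 = \mathcal{E}$. For the inductive step I write $\mathcal{F}_n = \mu(\mathcal{F}, \mathcal{F}_{n-1})$, substitute the decomposition $\mathcal{F} = i_*\uQQ[-3]^{\oplus 2} \oplus j_!\uQQ[-3] \oplus \mathcal{E}$ and the inductive formula for $\mathcal{F}_{n-1}$, and use exactness of $\mu$ in both variables to distribute over the direct sums. This reduces the problem to tabulating $\mu(X, Y)$ for $X \in \{i_*\uQQ, j_!\uQQ, \mathcal{E}\}$ and $Y \in \{i_*\uQQ, \mathcal{U}_k, \mathcal{E}_k\}$.

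Most entries of this table are already available. \cref{prop:multiplication_by_one} gives $\mu(i_*\uQQ, \blank) = \blank$; an entirely analogous argument using the inclusion $G \hookrightarrow G \times G$, $g \mapsto (g, 1)$, yields the right-unit identity $\mu(\blank, i_*\uQQ) = \blank$. The defining recursions give $\mu(j_!\uQQ, \mathcal{U}_k) = \mathcal{U}_{k+1}$ and $\mu(\mathcal{E}, \mathcal{E}_k) = \mathcal{E}_{k+1}$, while \cref{lemma:mu_mathcal_E_U} handles $\mu(\mathcal{E}, j_!\uQQ) = i_*\uQQ[-3]$. For the two remaining entries I first establish the symmetric analogue $\mu(j_!\uQQ, \mathcal{E}) = i_*\uQQ[-3]$ by repeating the argument of \cref{lemma:mu_mathcal_E_U}: apply $\mu(j_!\uQQ, \blank)$ to the triangle $i_*\uQQ[-3] \xrightarrow{\varepsilon} \uQQ \to \mathcal{E} \xrightarrow{+}$, and use a Künneth-type computation $\pi_* \mu(j_!\uQQ, \mathcal{E}) = \pi_* j_!\uQQ \otimes \pi_* \mathcal{E} = \QQ[-3]$ to force the resulting connecting map $j_!\uQQ[-3] \to \uQQ[-3]$ to be the non-trivial extension class in \eqref{eq:short_exact_sequence}, so that its cone is $i_*\uQQ[-3]$. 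Associativity of $\mu$ then yields $\mu(\mathcal{E}, \mathcal{U}_k) = \mu(\mu(\mathcal{E}, j_!\uQQ), \mathcal{U}_{k-1}) = \mathcal{U}_{k-1}[-3]$ and symmetrically $\mu(j_!\uQQ, \mathcal{E}_k) = \mathcal{E}_{k-1}[-3]$ for $k \ge 1$.

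The remainder is bookkeeping. Collecting the contributions of each triple of sources, the multiplicity of $\mathcal{U}_k[-3n]$ in $\mathcal{F}_n$ (for $1 \le k \le n$) becomes $2\binom{2n-2}{n-1-k} + \binom{2n-2}{n-k} + \binom{2n-2}{n-k-2}$, which simplifies to $\binom{2n}{n-k}$ by two applications of Pascal's rule. The multiplicity of $\mathcal{E}_k[-3(n-k)]$ is given by the same expression (using $\mathcal{E}_0 = i_*\uQQ$ to unify the $k = 1$ line with $\mu(\mathcal{E}, i_*\uQQ) = \mathcal{E}_1$). Finally, $i_*\uQQ[-3n]$ receives $2\binom{2n-2}{n-1}$ from $\mu(i_*\uQQ[-3]^{\oplus 2}, i_*\uQQ[-3(n-1)]^{\oplus \binom{2n-2}{n-1}})$ together with two extra contributions of size $\binom{2n-2}{n-2}$ from $\mu(j_!\uQQ, \mathcal{E}_1) = i_*\uQQ[-3]$ and $\mu(\mathcal{E}, \mathcal{U}_1) = i_*\uQQ[-3]$, summing to $2\binom{2n-1}{n-1} = \binom{2n}{n}$.

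The main obstacle is the auxiliary identity $\mu(j_!\uQQ, \mathcal{E}) = i_*\uQQ[-3]$: since $G$ is non-abelian, the right- and left-handed versions of $\mu$ are not formally identified, so \cref{lemma:mu_mathcal_E_U} must genuinely be reproven on the other side. Once that is done, the rest of the argument is purely formal combinatorics.
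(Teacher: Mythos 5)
Your proof is correct and follows the same inductive strategy as the paper's one-line sketch, which cites the same ingredients (\cref{prop:pushforward_along_commutator}, \cref{prop:multiplication_by_one}, \cref{prop:properties_mathcal_U}, \cref{prop:properties_mathcal_E}, \cref{lemma:mu_mathcal_E_U}). You rightly flag and fill in two details the paper leaves implicit: the right-unit identity $\mu(\blank, i_*\uQQ) = \blank$ (tacitly used in asserting $\mathcal{U}_1 = j_!\uQQ$ and $\mathcal{E}_1 = \mathcal{E}$), and the mirrored lemma $\mu(j_!\uQQ, \mathcal{E}) \cong i_*\uQQ[-3]$, which is genuinely not a formal consequence of \cref{lemma:mu_mathcal_E_U} since $G$ is non-abelian --- your argument via the Künneth constraint on $\pi_*$ is sound, and the Pascal's-rule bookkeeping then closes the induction.
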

\begin{proof}
    Proof by induction on $n$, the case $n = 1$ being \cref{prop:pushforward_along_commutator}. For $n > 1$, one computes $\mathcal{F}_n = \mu(\mathcal{F}, \mathcal{F}_{n - 1}) = \mu(i_* \uQQ[-3], \mathcal{F}_{n - 1})^{\oplus 2} \oplus \mu(j_! \uQQ[-3], \mathcal{F}_{n - 1}) \oplus \mu(\mathcal{E}, \mathcal{F}_{n - 1})$ using \cref{prop:multiplication_by_one,prop:properties_mathcal_U,prop:properties_mathcal_E,lemma:mu_mathcal_E_U} and the induction hypothesis.
\end{proof}

\begin{remark}
    As a sanity check, one can verify that $\pi_* \mathcal{F}_n = (\QQ[0] \oplus \QQ[-3])^{\otimes 2n}$ equals $H^*(G^{2n}; \QQ)$, using \cref{prop:properties_mathcal_U,prop:properties_mathcal_E}, as expected.
\end{remark}

\begin{theorem}
    \label{thm:poincare_polynomial_SU2_representation_varieties}
    The Poincaré polynomial of the $\SU(2)$-representation variety of $\Sigma_g$ is given by
    \[ P(\Rep{\SU(2)}{\Sigma_g}) = \binom{2g}{g} t^{3g} + \sum_{k = 1}^{g} \binom{2g}{g - k} \frac{t^{2 - k} - t^{k + 1} + t^{3k - 1} - t^{-3k}}{t^2 - 1} . \]
\end{theorem}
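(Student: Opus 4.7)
The plan is to combine \cref{prop:formula_cohomology_compact_support_as_pullback_of_convolution} with the explicit decomposition of $\mathcal{F}_g = (c_g)_! \uQQ$ given in \cref{prop:explicit_expression_mathcal_Fn}, and then convert the resulting stalk at $1 \in \SU(2)$ into a Poincaré polynomial. Since $\SU(2)$ is compact, the closed subspace $\Rep{\SU(2)}{\Sigma_g} \subseteq \SU(2)^{2g}$ is compact as well, so cohomology and cohomology with compact support agree; combined with \cref{prop:formula_cohomology_compact_support_as_pullback_of_convolution} this reduces the theorem to computing the shifts appearing in $i^* \mathcal{F}_g$ together with their multiplicities.

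Applying $i^*$ termwise to
\[ \mathcal{F}_g = i_* \uQQ[-3g]^{\oplus \binom{2g}{g}} \oplus \bigoplus_{k = 1}^{g} \mathcal{U}_k[-3g]^{\oplus \binom{2g}{g - k}} \oplus \bigoplus_{k = 1}^{g} \mathcal{E}_k[-3(g - k)]^{\oplus \binom{2g}{g - k}} , \]
I would use $i^* i_* \uQQ = \uQQ$ together with \cref{prop:properties_mathcal_U}\textit{(ii)} and \cref{prop:properties_mathcal_E}\textit{(ii)}. The first summand contributes $\binom{2g}{g} t^{3g}$ to the Poincaré polynomial; the $k$-th $\mathcal{U}$-summand contributes $\binom{2g}{g - k} \sum_{j = 0}^{k - 2} t^{2j + k + 1 + 3g}$; and the $k$-th $\mathcal{E}$-summand contributes $\binom{2g}{g - k} \sum_{j = 0}^{k} t^{2j + 3(g - k)}$. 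Summing over $k$ then yields an explicit expression for $P(\Rep{\SU(2)}{\Sigma_g})$ as a sum of arithmetic progressions of powers of $t$ weighted by binomial coefficients.

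Finally, I would collapse each progression by means of the identity $\sum_{j = 0}^{m - 1} t^{2j} = (t^{2m} - 1)/(t^2 - 1)$, write the resulting four monomials over the common denominator $t^2 - 1$, and simplify to match the statement of the theorem. The only nontrivial point is bookkeeping: one must carefully track the distinct shifts $[-3g]$ and $[-3(g - k)]$ appearing on the $\mathcal{U}_k$- and $\mathcal{E}_k$-summands respectively, and check that the empty sum at $k = 1$ (consistent with $i^* \mathcal{U}_1 = i^* j_! \uQQ = 0$) matches the closed form vanishing there. No further geometric input beyond the cited propositions is required.
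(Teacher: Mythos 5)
Your proof is correct and follows essentially the same route as the paper: reduce to $i^*\mathcal{F}_g$ via \cref{prop:formula_cohomology_compact_support_as_pullback_of_convolution}, plug in the decomposition from \cref{prop:explicit_expression_mathcal_Fn}, and evaluate the stalks using $i^*i_*\uQQ = \QQ$ together with \cref{prop:properties_mathcal_U}\textit{(ii)} and \cref{prop:properties_mathcal_E}\textit{(ii)}; the paper's own proof is a terse version of exactly this. One small caveat: when you carry out the final collapse of geometric progressions, you will find an overall factor of $t^{3g}$ multiplying the sum that is absent from the displayed formula in the theorem statement (the exponents $2-k$, $k+1$, $3k-1$, $-3k$ should each carry an extra $+3g$, or equivalently the fraction should be prefixed by $t^{3g}$); your intermediate expressions are what actually reproduce the low-genus examples, so the discrepancy is a typo in the stated formula, not an error in your argument.
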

\begin{proof}
    By \cref{prop:formula_cohomology_compact_support_as_pullback_of_convolution}, the cohomology of the $\SU(2)$-representation variety of $\Sigma_g$ (which is equal to the cohomology with compact support as $G$ is compact) is given by $i^* \mathcal{F}_g$. Hence, the result follows from \cref{prop:explicit_expression_mathcal_Fn} using that $i^* i_* \uQQ = \QQ$ and $i^* \mathcal{U}_n = \bigoplus_{k = 0}^{n - 2} \QQ[-2k - n - 1]$ and $i^* \mathcal{E}_n = \bigoplus_{k = 0}^{n} \QQ[-2k]$.
\end{proof}

\begin{example}
    The first few Poincaré polynomials are
    \begin{align*}
        P(\Rep{\SU(2)}{\Sigma_0}) &= 1 , \\
        P(\Rep{\SU(2)}{\Sigma_1}) &= 2 t^3 + t^2 + 1 , \\
        P(\Rep{\SU(2)}{\Sigma_2}) &= t^9 + 6 t^6 + 4 t^5 + t^4 + 4 t^3 + t^2 + 1 .
    \end{align*}
\end{example}

\begin{remark}
    A recurring theme in the computation of cohomological invariants of representation varieties of $\Sigma_g$ in the $\K$-theoretic setting, such as in \cite{GonzalezLogaresMunoz2020, Gonzalez2020, HablicsekVogel2022}, is that it suffices to work within a finitely generated submodule of the $\K$-group over $G$ (e.g.\ $\K(\cat{MHM}_G)$ or $\K(\Var_G)$ or similar). More precisely, one computes the class $[c]$ of the commutator map $c \colon G^2 \to G$ in the $\K$-group over $G$ and that of the iterated products $[c] * \cdots * [c]$, and although it has not been proven in any level of generality, in practice it turns out that these classes are contained within a finitely generated submodule of the $\K$-group over $G$. Consequently, the operation $[c] * (-)$ can be expressed as a matrix with respect to these generators, from which follows a recurrence relation between the $\K$-invariants of the representation varieties of $\Sigma_g$ for increasing genus $g$. In particular, the $\K$-invariants for all $g$ can be determined from the $\K$-invariants of sufficiently finitely many values of $g$.

    Now, this phenomenon seems to have disappeared in the categorified setting, as we encounter infinitely many non-trivial extensions $\mathcal{U}_n$ and $\mathcal{E}_n$, which cannot be expressed in terms of each other. Consequently, there is no recurrence relation between the polynomials $P(\Rep{\SU(2)}{\Sigma_g})$. On the other hand, when passing to the $\K$-theoretic setting, we find that $[\mathcal{E}_n] = [\mathcal{E}_{n - 1}] + [\uQQ]$ and $[\mathcal{U}_n] = (-1)^{n + 1} [\uQQ] - [\mathcal{U}_{n - 1}]$ in $\K(D^b(G))$ for all $n \ge 1$, which shows that the computation for the $\K$-invariant does restrict to a finitely generated submodule of $\K(D^b(G))$.
\end{remark}

\subsection{Twisted \texorpdfstring{$\SU(2)$}{SU(2)}-representation varieties}

Extracting the cohomology of the representation variety out of the explicit expression for $\mathcal{F}_n$ is a great result. However, the explicit expression in \cref{prop:explicit_expression_mathcal_Fn} contains much more information than this. For example, for any element $C \in G \setminus \{ 1 \}$ we can compute $i_C^* \mathcal{F}_g$ for the inclusion $i_C \colon \{ C \} \to G$. This way, we arrive at the notion of the \emph{twisted representation variety}.

\begin{definition}
    For any $C \in G \setminus \{ 1 \}$, the \emph{twisted $G$-representation variety} of $\Sigma_g$ with respect to $C$ is given by
    \[ \RepTw{G}{\Sigma_g}{C} = \left\{ (A_1, B_1, \ldots, A_g, B_g) \in G^{2g} \;\middle|\; [A_1, B_1] \cdots [A_g, B_g] = C \right\} . \]
    Similarly, the \emph{twisted $G$-character stack} of $\Sigma_g$ with respect to $C$ is the quotient stack
    \[ \CharStckTw{G}{\Sigma_g}{C} = \RepTw{G}{\Sigma_g}{C} / G \]
    where $G$ acts on $\RepTw{G}{\Sigma_g}{C}$ by conjugation.
\end{definition}

\begin{remark}
    Elements of the twisted $G$-representation variety $\RepTw{G}{\Sigma_g}{C}$ can be interpreted as representations from the fundamental group $\pi_1(\Sigma_g \setminus \{ x \}, *)$ of the punctured surface $\Sigma_g \setminus \{ x \}$ for some point $x \in \Sigma_g$, such that a small loop around $x$ is sent to $C$.
\end{remark}

\begin{corollary}
    \label{cor:poincare_polynomial_twisted_SU2_representation_varieties}
    The Poincaré polynomial of the twisted $\SU(2)$-representation variety of $\Sigma_g$ is given by
    \[ P(\RepTw{\SU(2)}{\Sigma_g}{C}) = t^{3n - 3/2} \sum_{k = 1}^{n} \binom{2n}{n - k} \frac{(t^k - t^{-k}) (t^{2k - 1/2} + t^{-2k + 1/2})}{t - t^{-1}} \]
    for any $C \in \SU(2) \setminus \{ 1 \}$. In particular, it is symmetric.
\end{corollary}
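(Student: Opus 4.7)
The starting point is the twisted analogue of \cref{prop:formula_cohomology_compact_support_as_pullback_of_convolution}: by proper base change applied to the cartesian square obtained by replacing $\{1\} \hookrightarrow G$ with $i_C \colon \{C\} \to G$, we have $H_c^*(\RepTw{\SU(2)}{\Sigma_g}{C}; \QQ) = i_C^* \mathcal{F}_g$. The plan is to apply $i_C^*$ to the decomposition of $\mathcal{F}_g$ in \cref{prop:explicit_expression_mathcal_Fn} and read off the Poincaré polynomial.

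Since $C \ne 1$, the first summand vanishes: $i_C^* i_* \uQQ = 0$. The remaining terms $i_C^* \mathcal{U}_n$ and $i_C^* \mathcal{E}_n$ are computed inductively from the distinguished triangles
\[ \uQQ[-3n + 2] \to \mathcal{U}_{n - 1}[-1] \to \mathcal{U}_n \xrightarrow{+} \quad \textup{and} \quad \mathcal{E}_{n - 1}[-3] \to \uQQ \to \mathcal{E}_n \xrightarrow{+} \]
extracted from the proofs of \cref{prop:properties_mathcal_U,prop:properties_mathcal_E}. Starting from $i_C^* \mathcal{U}_1 = i_C^* j_! \uQQ = \QQ$ and $i_C^* \mathcal{E}_1 = \QQ$ (using $i_C^* \mathcal{E}_0 = i_C^* i_* \uQQ = 0$), one checks by induction on $n$ that at each step the connecting morphism vanishes for degree reasons: the source sits in a single cohomological degree that is strictly greater than all degrees appearing in the target. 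Consequently,
\[ i_C^* \mathcal{U}_n = \bigoplus_{j = 0}^{n - 1} \QQ[-3(n - 1) + 2j] \quad \textup{and} \quad i_C^* \mathcal{E}_n = \bigoplus_{j = 0}^{n - 1} \QQ[-2j] , \]
concentrated in cohomological degrees $n - 1, n + 1, \ldots, 3n - 3$ and $0, 2, \ldots, 2n - 2$, respectively.

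Substituting these into \cref{prop:explicit_expression_mathcal_Fn} and taking into account the external shifts $[-3g]$ on the $\mathcal{U}_k$ summand and $[-3(g - k)]$ on the $\mathcal{E}_k$ summand, the $k$-th term contributes
\[ \binom{2g}{g - k} \cdot \frac{(t^{2k} - 1)(t^{3g + k - 1} + t^{3g - 3k})}{t^2 - 1} \]
to the Poincaré polynomial. A direct algebraic rearrangement, using $t^{2k} - 1 = t^k (t^k - t^{-k})$ and $t^2 - 1 = t(t - t^{-1})$, rewrites this exactly as the $k$-th summand of the claimed formula, and summing over $k$ yields the result. The symmetry under $t \leftrightarrow t^{-1}$ is now manifest from the factorization, and is consistent with Poincaré duality on the smooth closed orientable $(6g - 3)$-manifold $\RepTw{\SU(2)}{\Sigma_g}{C}$, whose middle dimension is $3g - \tfrac{3}{2}$.

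The main obstacle is the inductive vanishing of the two connecting morphisms; the hypothesis $C \ne 1$ enters decisively here, since it ensures that the initial stalk $i_C^* i_* \uQQ$ vanishes and keeps all relevant cohomological degrees separated throughout the induction.
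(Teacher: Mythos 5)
Your proof is correct and follows essentially the same route as the paper: apply $i_C^*$ to the decomposition in \cref{prop:explicit_expression_mathcal_Fn}, compute $i_C^*\mathcal{U}_n$ and $i_C^*\mathcal{E}_n$ by induction from the same triangles used in \cref{prop:properties_mathcal_U,prop:properties_mathcal_E}, and rearrange algebraically. (One small imprecision: in the $\mathcal{E}_n$ induction the source $i_C^*\mathcal{E}_{n-1}[-3]$ is not concentrated in a single degree once $n\ge 3$, but all its degrees are strictly positive while the target $\QQ$ sits in degree $0$, so the morphism still vanishes and your conclusion stands.)
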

\begin{proof}
    The statement follows from \cref{prop:explicit_expression_mathcal_Fn}, using the facts that $i_C^* i_* \uQQ = 0$ and $i_C^* \mathcal{U}_n = \bigoplus_{k = 0}^{n - 1} \QQ[-2k - n + 1]$ and $i_C^* \mathcal{E}_n = \bigoplus_{k = 0}^{n - 1} \QQ[-2k]$ for $n \ge 1$. The last two equalities are shown similar to \cref{prop:properties_mathcal_E} \textit{(ii)} and \cref{prop:properties_mathcal_U} \textit{(ii)}, respectively.
\end{proof}

\begin{remark}
    The twisted representation varieties are smooth \cite[Theorem 2.2.5]{HauselRodriguezVillegas2008} and compact. Assuming orientability, this explains why their Poincaré polynomials are symmetric: due to Poincaré duality.
\end{remark}

\subsection{Non-orientable surfaces}

The method used in \cref{subsection:commutator_map,subsection:multiplication_map} to compute the Poincaré polynomials of the representation varieties of closed orientable surfaces works equally well for non-orientable surfaces. Denote by $N_r$ the non-orientable surface of demigenus $r$, that is, of Euler characteristic $2 - r$. Equivalently, $N_r$ is the connected sum of $r$ real projective planes. The fundamental group of $N_r$ is given by
\[ \pi_1(N_r, *) = \langle a_1, \ldots, a_r \mid a_1^2 \cdots a_r^2 = 1 \rangle . \]
Hence, to study the corresponding representation variety $\Rep{\SU(2)}{N_r}$, we turn our attention to the squaring map $A \mapsto A^2$ rather than the commutator map $(A, B) \mapsto [A, B]$. Actually, it turns out to be convenient to study the squaring map with a minus sign
\[ s \colon G \to G, \quad A \mapsto -A^2 \]
instead. Then, the cohomology of $\Rep{\SU(2)}{N_r}$ is given by
\begin{equation}
    \label{eq:cohomology_non_orientable_surfaces}
    H^*(\Rep{\SU(2)}{N_r}; \QQ) = \begin{cases}
        i^* (\underbrace{s_! \uQQ * \cdots * s_! \uQQ}_{r \textup{ times}}) & \textup{ if } r \textup{ is even} , \\
        i_{-1}^* (\underbrace{s_! \uQQ * \cdots * s_! \uQQ}_{r \textup{ times}}) & \textup{ if } r \textup{ is odd} .
    \end{cases}
\end{equation}

\begin{proposition}
    $s_! \uQQ = j_! \uQQ \oplus \mathcal{E}$
\end{proposition}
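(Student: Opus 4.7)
The plan is to establish this decomposition via the localization triangle, in the spirit of \cref{prop:pushforward_along_commutator}: compute the two restrictions $i^*s_!\uQQ$ and $j^*s_!\uQQ$ via proper base change, and then identify the connecting morphism using the $\Ext$-computations of \cref{prop:ext_groups_extensions_by_zero} together with a global cohomology constraint.

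For the closed stratum, proper base change with $i \colon \{1\} \to G$ identifies $i^* s_! \uQQ$ with the cohomology of the fiber $s^{-1}(1) = \{A \in \SU(2) : A^2 = -1\}$. Every such $A$ is conjugate to $\smatrix{i & 0 \\ 0 & -i}$ with centralizer $\U(1)$, so this fiber is $\SU(2)/\U(1) \cong S^2$, yielding $i^* s_! \uQQ = \QQ[0] \oplus \QQ[-2]$. For the open stratum, $s \colon G \setminus S^2 \to U$ is proper and $2$-to-$1$ with fibers $\{A, -A\}$; since $G \setminus S^2$ is the disjoint union of the two open hemispheres $\{a_0 > 0\}$ and $\{a_0 < 0\}$ of $S^3 = \SU(2)$, each of which maps homeomorphically onto the contractible $U \cong \RR^3$, the cover is trivial and $j^* s_! \uQQ \cong \uQQ^{\oplus 2}$.

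These stalks assemble into the localization triangle
\[ j_! \uQQ^{\oplus 2} \to s_! \uQQ \to i_* \uQQ \oplus i_* \uQQ[-2] \xrightarrow{\psi} j_! \uQQ^{\oplus 2}[1], \]
and by \cref{prop:ext_groups_extensions_by_zero} each entry of $\psi$ is a scalar multiple of $\delta$ (in the column indexed by $i_* \uQQ$) or $\varepsilon$ (in the column indexed by $i_* \uQQ[-2]$). Up to invertible row operations in $\GL_2(\QQ)$ on the target and independent rescalings on the two source summands, $\psi$ is encoded by a $2 \times 2$ scalar matrix, and the main task is to identify it.

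The main obstacle is pinning down this matrix. Applying $\pi_*$ to the triangle and comparing with $\pi_* s_! \uQQ = H^*(G; \QQ) = \QQ[0] \oplus \QQ[-3]$ forces the $\varepsilon$-column of $\psi$ to be nonzero, since $\pi_* \varepsilon$ is an isomorphism while $\pi_* \delta = 0$; this alone, however, leaves the $\delta$-column underdetermined. To fix it I would exploit the invariance $s(-A) = s(A)$ to factor $s = \tilde s \circ q$ through the double cover $q \colon \SU(2) \to \SO(3)$, decompose $q_* \uQQ = \uQQ \oplus \mathcal{L}$ over $\QQ$ into the trivial and sign local systems on $\SO(3)$, and compute $\tilde s_* \uQQ$ and $\tilde s_* \mathcal{L}$ separately via their stalks at $1$, using $H^*(\mathbb{RP}^2; \QQ) = \QQ[0]$ and $H^*(\mathbb{RP}^2; \mathcal{L}|_{\mathbb{RP}^2}) = \QQ[-2]$ (the latter being the anti-invariants of $H^*(S^2; \QQ)$ under the antipodal action). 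Reassembling these contributions and bringing $\psi$ to its normal form then matches $s_! \uQQ$ with $j_! \uQQ \oplus \mathcal{E}$.
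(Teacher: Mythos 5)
Your stalk computations agree with the paper's: $i^*s_!\uQQ = \QQ[0]\oplus\QQ[-2]$ from the trace-zero sphere $S^2$, and $j^*s_!\uQQ = \uQQ^{\oplus 2}$ from the two hemispheres. You are also right to flag that the paper's one-line justification (the constraint $\pi_*\mathcal{S} = H^*(G;\QQ)$ ``forces'' $\varphi = \smatrix{\delta & \varepsilon \\ \delta & \varepsilon}$) only pins down the $\varepsilon$-column of the connecting map; since $\pi_*\delta$ vanishes for degree reasons, the $\delta$-column is genuinely underdetermined by $\pi_*$ alone. The idea of factoring $s = \tilde s\circ q$ through $\SO(3)$ and splitting $q_*\uQQ = \uQQ\oplus\mathcal{L}$ is exactly the right way to resolve this, and it is sharper than what appears in the paper.

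The gap is in the final sentence, where you assert without computation that ``reassembling'' gives $j_!\uQQ\oplus\mathcal{E}$. It does not. Carrying out your own plan: $\tilde s_!\uQQ$ has $\mathcal{H}^0 = \uQQ$ (rank one everywhere, including at $1$, since $H^0(\mathbb{RP}^2;\QQ)=\QQ$) and no higher cohomology sheaves, so $\tilde s_!\uQQ \cong \uQQ$, \emph{not} $j_!\uQQ$. And $\tilde s_!\mathcal{L}$ has $\mathcal{H}^0 = j_!\uQQ$ (the stalk at $1$ vanishes, $H^0(\mathbb{RP}^2;\mathcal{L})=0$) and $\mathcal{H}^2 = i_*\uQQ$, with nontrivial Postnikov class because $\pi_*\tilde s_!\mathcal{L} = H^*(\mathbb{RP}^3;\mathcal{L}) = 0$; so $\tilde s_!\mathcal{L} \cong \Cone\big(i_*\uQQ[-3]\xrightarrow{\varepsilon}j_!\uQQ\big)$, the cone of $\varepsilon$ alone, not of $(\delta\;\varepsilon)$. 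Altogether the factorization gives
\[
 s_!\uQQ \;\cong\; \uQQ\;\oplus\;\Cone\big(i_*\uQQ[-3]\xrightarrow{\varepsilon}j_!\uQQ\big),
\]
and this is \emph{not} isomorphic to $j_!\uQQ\oplus\mathcal{E}$: one checks that $\Hom(j_!\uQQ\oplus\mathcal{E},\,j_!\uQQ) = \QQ$ (coming from $\Hom(j_!\uQQ,j_!\uQQ) \cong H^0(U;\QQ)$, while $\Hom(\mathcal{E},j_!\uQQ)=0$), whereas $\Hom(\uQQ\oplus\Cone(\varepsilon),\,j_!\uQQ) = 0$ since both $\Hom(\uQQ,j_!\uQQ) = \Gamma_c(U;\QQ) = 0$ and $\Hom(\Cone(\varepsilon),j_!\uQQ)=0$. (As an independent check, $\Hom(s_!\uQQ,j_!\uQQ)=\Hom(\uQQ,s^!j_!\uQQ)$ vanishes because $s$ has nonzero degree, so $s^!$ of the unit $\uQQ\to i_*\uQQ$ is nonzero.) Both candidates have the same $i^*$, $j^*$, $\pi_*$, and cohomology sheaves $\mathcal{H}^0 = \uQQ\oplus j_!\uQQ$, $\mathcal{H}^2 = i_*\uQQ$; the difference is solely in which summand of $\mathcal{H}^0$ carries the nontrivial gluing to $\mathcal{H}^2$. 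So your argument, if completed honestly, does not reproduce the statement as written — it contradicts it. You should either locate the error (I do not see one) or flag that the statement needs to be revised to $s_!\uQQ = \uQQ\oplus\Cone(i_*\uQQ[-3]\xrightarrow{\varepsilon}j_!\uQQ)$ and check what this does to the formula for $\mathcal{S}_n$ downstream.
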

\begin{proof}
    Write $\mathcal{S} = s_! \uQQ$. By \cref{prop:localization_distinguished_triangle}, we have the distinguished triangle
    \[ j_! j^* \mathcal{S} \to \mathcal{S} \to i_* i^* \mathcal{S} \xrightarrow{+} . \]
    Note that $i^* \mathcal{S}$ is the cohomology of the trace-zero matrices, that is, $S^2$, so $i^* \mathcal{S} = \QQ[0] \oplus \QQ[-2]$. Furthermore, $j^* \mathcal{S}$ is just two copies of $U$, that is, $j^* \mathcal{S} = \uQQ^{\oplus 2}$. Hence,
    \[ \mathcal{S} = \Cone\big(i_* \uQQ[-1] \oplus i_* \uQQ[-2] \xrightarrow{\varphi} j_! \uQQ[0] \oplus j_! \uQQ[0] \big) . \]
    Due to the constraint $\pi_* \mathcal{S} = H^*(G; \QQ) = \QQ[0] \oplus \QQ[3]$, we can only have $\varphi = \smatrix{\delta & \varepsilon \\ \delta & \varepsilon}$.
\end{proof}

\begin{proposition}
    \label{prop:expression_Sn}
    Let $\mathcal{S}_0 = i_* \uQQ$ and inductively define $\mathcal{S}_n = \mu(s_! \uQQ, \mathcal{S}_{n - 1})$ for $n \ge 1$. Then for any $n \ge 1$, we have
    \[ \mathcal{S}_n = \bigoplus_{k = 0}^{\lfloor (n - 1) / 2 \rfloor} \mathcal{U}_{n - 2k}[-3k]^{\oplus \binom{n}{k}} \oplus \bigoplus_{k = 0}^{\lfloor n / 2 \rfloor} \mathcal{E}_{n - 2k}[-3k]^{\oplus \binom{n}{k}} . \]
\end{proposition}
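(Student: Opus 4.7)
The plan is to prove the formula by induction on $n$, paralleling the proof of \cref{prop:explicit_expression_mathcal_Fn}. The base case $n = 0$ is immediate, since the first sum is empty and the second reduces to $\mathcal{E}_0 = i_* \uQQ = \mathcal{S}_0$. For the inductive step, the key is to exploit the decomposition $s_! \uQQ = j_! \uQQ \oplus \mathcal{E}$ from the preceding proposition and the biexactness of $\mu$, giving
\[ \mathcal{S}_n = \mu(s_! \uQQ, \mathcal{S}_{n-1}) = \mu(j_! \uQQ, \mathcal{S}_{n-1}) \oplus \mu(\mathcal{E}, \mathcal{S}_{n-1}) . \]

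Before substituting the inductive formula for $\mathcal{S}_{n-1}$, I would assemble a short multiplication table on the building blocks. Two entries are immediate from the definitions: $\mu(j_! \uQQ, \mathcal{U}_m) = \mathcal{U}_{m+1}$ and $\mu(\mathcal{E}, \mathcal{E}_m) = \mathcal{E}_{m+1}$. The remaining two require \cref{lemma:mu_mathcal_E_U} together with the associativity of $\mu$. Namely, for $m \ge 1$,
\[ \mu(\mathcal{E}, \mathcal{U}_m) = \mu\big(\mathcal{E}, \mu(j_! \uQQ, \mathcal{U}_{m-1})\big) = \mu\big(\mu(\mathcal{E}, j_! \uQQ), \mathcal{U}_{m-1}\big) = \mu(i_* \uQQ[-3], \mathcal{U}_{m-1}) = \mathcal{U}_{m-1}[-3] , \]
by \cref{lemma:mu_mathcal_E_U} and \cref{prop:multiplication_by_one}. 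For the fourth entry $\mu(j_! \uQQ, \mathcal{E}_m) = \mathcal{E}_{m-1}[-3]$, I would first establish the mirror identity $\mu(j_! \uQQ, \mathcal{E}) = i_* \uQQ[-3]$, whose proof is verbatim the same as \cref{lemma:mu_mathcal_E_U} after swapping the two factors (using the right-handed analogues of \cref{prop:multiplication_by_one,prop:multiplication_by_G}), and then conclude by the same associativity trick.

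With the multiplication table in hand, substitute the inductive description of $\mathcal{S}_{n-1}$ and collect. Under $\mu(j_! \uQQ, -)$, each $\mathcal{U}_{n-1-2k}[-3k]$ becomes $\mathcal{U}_{n-2k}[-3k]$ and each $\mathcal{E}_{n-1-2k}[-3k]$ becomes $\mathcal{E}_{n-2-2k}[-3(k+1)]$; under $\mu(\mathcal{E}, -)$, each $\mathcal{E}_{n-1-2k}[-3k]$ becomes $\mathcal{E}_{n-2k}[-3k]$ and each $\mathcal{U}_{n-1-2k}[-3k]$ becomes $\mathcal{U}_{n-2-2k}[-3(k+1)]$. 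The $\mathcal{U}$-summands in $\mathcal{S}_n$ then receive contributions with multiplicities $\binom{n-1}{k}$ and $\binom{n-1}{k-1}$, which combine to $\binom{n}{k}$ by Pascal's identity, and similarly for the $\mathcal{E}$-summands.

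The main obstacle I expect is the bookkeeping at the boundary of the summation ranges. Specifically, when the shifted index drops to zero, the conventions $\mathcal{U}_0 = i_* \uQQ = \mathcal{E}_0$ cause terms to migrate between the two families, and the upper summation limits $\lfloor (n-1)/2 \rfloor$ and $\lfloor n/2 \rfloor$ change behaviour according to the parity of $n$. I would handle this by separating the even and odd cases in the final collection step, verifying in each that the extreme terms produced by the two multiplication operators line up correctly with the claimed binomial coefficients, and in particular that no spurious $\mathcal{U}_0$ or $\mathcal{E}_{-1}$ term appears.
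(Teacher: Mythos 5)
The paper omits a proof of \cref{prop:expression_Sn}, evidently regarding it as analogous to \cref{prop:explicit_expression_mathcal_Fn}; your argument fills this in correctly and in the style the paper implicitly intends. The multiplication table is right: $\mu(j_!\uQQ,\mathcal{U}_m)=\mathcal{U}_{m+1}$ and $\mu(\mathcal{E},\mathcal{E}_m)=\mathcal{E}_{m+1}$ are definitional, $\mu(\mathcal{E},\mathcal{U}_m)=\mathcal{U}_{m-1}[-3]$ follows by associativity and \cref{lemma:mu_mathcal_E_U}, and the mirror identity $\mu(j_!\uQQ,\mathcal{E})=i_*\uQQ[-3]$ is indeed obtainable by applying $\mu(j_!\uQQ,-)$ to the triangle $i_*\uQQ[-3]\to\uQQ\to\mathcal{E}$ once one has right-handed analogues of \cref{prop:multiplication_by_one,prop:multiplication_by_G}. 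Those analogues do hold — the cartesian square $G\times G\xrightarrow{\pi_1}G$, $G\times G\xrightarrow{m}G$ over $G\to *$ is cartesian via $(a,b)\mapsto(b,b^{-1}a)$, giving $\mu(\mathcal{F},\uQQ)=\pi^*\pi_!\mathcal{F}$, and the square for $G\times\{1\}$ gives $\mu(\mathcal{F},i_*\uQQ)=\mathcal{F}$ — and in fact they are already silently used in the paper's own proof of \cref{prop:explicit_expression_mathcal_Fn}, so you are not introducing anything foreign. Your boundary-case warning is also well-placed: when $n$ is odd the $\mathcal{E}_0=i_*\uQQ$ summand of $\mathcal{S}_{n-1}$ must be sent to $\mathcal{U}_1$ under $\mu(j_!\uQQ,-)$ rather than a nonexistent $\mathcal{E}_{-1}[-3]$, and when $n$ is even the $\mathcal{U}_1$ summand sent through $\mu(\mathcal{E},-)$ produces $i_*\uQQ[-3]$ which must be counted as $\mathcal{E}_0$; tracking these shows that the extremal binomial coefficient in the even case is $\binom{n-1}{n/2-1}+\binom{n-1}{n/2-1}=\binom{n}{n/2}$ by the symmetry $\binom{n-1}{n/2-1}=\binom{n-1}{n/2}$, exactly matching the claimed formula. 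Small nitpick: the statement is asserted for $n\ge 1$, so the more natural base case to state is $n=1$ (giving $\mathcal{S}_1=\mathcal{U}_1\oplus\mathcal{E}_1$ directly from the decomposition of $s_!\uQQ$ and the right-handed \cref{prop:multiplication_by_one}), though your observation that the formula also degenerates correctly at $n=0$ is harmless and in fact convenient for starting the induction.
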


The Poincaré polynomial of $\Rep{\SU(2)}{N_r}$ can now be computed by applying $i^*$ (resp.\ $i_{-1}^*$) to the expression for $\mathcal{S}_r$ when $r$ is even (resp.\ when $r$ is odd), according to \eqref{eq:cohomology_non_orientable_surfaces}.

\subsection{\texorpdfstring{$\SO(3)$}{SO(3)}- and \texorpdfstring{$\U(2)$}{U(2)}-representation varieties}

In this section, we will show how the computations for the $\SU(2)$-representation varieties of $\Sigma_g$ can be adapted to obtain the cohomology of the $\SO(3)$- and $\U(2)$-representation varieties of $\Sigma_g$.

Observe that, since $\SO(3)$ is the quotient of $\SU(2)$ by its center $\{ \pm 1 \}$, the commutator map $c' \colon \SO(3)^2 \to \SO(3)$ factors through $\SU(2)$: indeed, the commutator $[A, B]$ is invariant under sign changes $A \mapsto -A$ or $B \mapsto -B$. Writing $\overline{c} \colon \SO(3)^2 \to \SU(2)$ for the lift of the commutator map, we obtain the following commutative diagram.
\[ \begin{tikzcd}
    \SU(2)^2 \arrow{r} \arrow{d}{c} & \SO(3)^2 \arrow{d}{c'} \arrow[dashed, swap]{ld}{\overline{c}} \\
    \SU(2) \arrow{r} & \SO(3)
\end{tikzcd} \]
This shows that the computation for the cohomology of $\Rep{\SO(3)}{\Sigma_g}$ can be performed in $D^b(\SU(2))$, which allows us to re-use the tools developed in \cref{subsection:commutator_map,subsection:multiplication_map}. However, note that, at the end of the computation, instead of pulling back along $\{ 1 \} \to \SO(3)$, one must pull back along $\{ \pm 1 \} \to \SU(2)$, because the pre-image of $\{ 1 \}$ along $\SU(2) \to \SO(3)$ is $\{ \pm 1 \}$. In particular, $\Rep{\SO(3)}{\Sigma_g}$ consists of two connected components.

\begin{proposition}
    $\overline{c}_! \uQQ = c_! \uQQ$
\end{proposition}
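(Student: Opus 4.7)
The strategy is to exploit the factorization $c = \overline{c} \circ p$, where $p \colon \SU(2)^2 \to \SO(3)^2$ is the finite Galois covering whose deck transformation group $\Gamma = \{\pm I\} \times \{\pm I\} \cong (\ZZ/2)^2$ acts by sign-changes on each factor. Since $p$ is proper, $p_! = p_*$, so $c_! \uQQ = \overline{c}_! p_* \uQQ$. Working over $\QQ$ (where $|\Gamma|$ is invertible), the sheaf $p_* \uQQ$ decomposes canonically into $\Gamma$-isotypic components, with the trivial isotypic piece being the canonical copy of $\uQQ$ pulled back from $\SO(3)^2$. Taking $\Gamma$-invariants commutes with the exact functor $\overline{c}_!$, so $(c_! \uQQ)^\Gamma = \overline{c}_! \uQQ$. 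Hence, the proposition is equivalent to showing that the natural $\Gamma$-action on $c_! \uQQ$---induced by the $\Gamma$-action on $\SU(2)^2$ that commutes with $c$---is the identity.

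To check this, I would reduce to stalks via proper base change (\cref{prop:proper_base_change}): the stalk of $c_! \uQQ$ at $g \in \SU(2)$ is $H^*_c(c^{-1}(g); \QQ)$, with $\Gamma$ acting through its sign-change action on the fiber $c^{-1}(g)$. Since $c_! \uQQ$ is constructible over $\QQ$ on the stratification $\SU(2) = \{1\} \sqcup U$, an endomorphism acting as the identity on every stalk is the identity in $D^b(\SU(2))$, so it suffices to verify stalkwise triviality.

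For $g \in U$, \cref{prop:commutator_at_general} identifies $c^{-1}(g)$ with the connected, oriented $3$-manifold $\SO(3)$, whose rational cohomology sits only in degrees $0$ and $3$. The action on $H^0$ is trivial by connectedness; the action on $H^3$ is trivial by orientation, since each generator of $\Gamma$ acts on $\SU(2)^2$ as left-multiplication by a central element---an orientation-preserving diffeomorphism (lying in the identity component of the connected group $\SU(2)^2$) that commutes with $c$ and with the identity on the base, hence preserving the induced fibrewise orientation. For $g = 1$, I would use the parameterization from the proof of \cref{prop:commutator_at_identity}: the fiber has the same rational cohomology as $Y/S_2$ for $Y = \SU(2)/\U(1) \times \U(1)^2$, and under this identification the sign-change $(A, B) \mapsto (-A, B)$ lifts to $(P, \alpha, \beta) \mapsto (P, -\alpha, \beta)$ on $Y$, which is isotopic to the identity through the rotations $\alpha \mapsto e^{i\theta}\alpha$, $\theta \in [0, \pi]$. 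The resulting triviality on $H^*(Y; \QQ)$ descends to $H^*(Y; \QQ)^{S_2} = H^*(c^{-1}(1); \QQ)$, since the $\Gamma$- and $S_2$-actions on $Y$ commute; the other sign-changes are treated symmetrically.

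The main technical obstacle will be promoting the stalkwise triviality of the $\Gamma$-action to triviality as a morphism in the derived category $D^b(\SU(2))$, since endomorphisms of complexes can in principle be non-zero while vanishing on stalks. This is resolved by the standard constructibility argument, as $c_! \uQQ$ has finitely many non-zero cohomology sheaves that are locally constant on the two-stratum stratification of $\SU(2)$. A secondary nuisance is that the rotation isotopy used at $g = 1$ is not itself $S_2$-equivariant, but this does not matter because only the action on $S_2$-invariant cohomology is needed.
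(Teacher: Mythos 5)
Your approach is genuinely different from the paper's, and arguably cleaner. The paper separately computes $i^*\overline{c}_!\uQQ$ and $j^*\overline{c}_!\uQQ$ by adapting \cref{prop:commutator_at_identity} and \cref{prop:commutator_at_general} (observing both times that the $\{\pm 1\}^2$-action on fiber cohomology is trivial) and then repeats the argument of \cref{prop:pushforward_along_commutator} to determine the connecting morphism. Your factorization $c = \overline{c} \circ p$ and the observation $(c_!\uQQ)^\Gamma = \overline{c}_!\uQQ$ reduces everything to showing the $\Gamma$-action on $c_!\uQQ$ is trivial, which neatly sidesteps the connecting-morphism analysis; your stalkwise checks are then essentially the same local computations the paper does via $i^*$ and $j^*$.

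There is, however, a gap in the passage from stalks to the derived category. The assertion that ``an endomorphism acting as the identity on every stalk is the identity in $D^b(\SU(2))$'' is false, even for constructible complexes, and the appeal to constructibility does not rescue it. For a concrete counterexample on the same two-stratum picture, take $\mathcal{F} = i_*\uQQ \oplus j_!\uQQ[1]$; then $\Hom(i_*\uQQ, j_!\uQQ[1]) = \Ext^1(i_*\uQQ, j_!\uQQ)$ is non-zero (cf.\ \cref{prop:ext_groups_extensions_by_zero}), yet any such morphism vanishes on every stalk, so $\id + \varepsilon$ for $\varepsilon \ne 0$ in this $\Ext^1$ is a non-identity automorphism that is the identity on all stalks. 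Fortunately your argument is easily repaired without this claim: since $\QQ[\Gamma]$ is semisimple, $c_!\uQQ$ decomposes canonically as $\bigoplus_{\chi}(c_!\uQQ)^\chi$ over the characters $\chi$ of $\Gamma$, and each summand is itself an object of $D^b(\SU(2))$ whose stalk at $g$ is the $\chi$-isotypic part of $H^*_c(c^{-1}(g);\QQ)$. Showing the $\Gamma$-action is trivial is therefore equivalent to showing $(c_!\uQQ)^\chi = 0$ for all $\chi \ne 1$, and a complex vanishes if and only if all its stalks vanish; so ``stalkwise triviality of the $\Gamma$-action'' \emph{does} suffice, but for this reason and not the one you give. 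With that substitution, the rest of your argument (orientation-preservation on the $\SO(3)$-fibers over $U$, equivariant Vietoris--Begle plus the rotation isotopy at $g = 1$) goes through; do note that you should verify the parameterization $Y/S_2 \to X$ is $\Gamma$-equivariant and that the $\Gamma$- and $S_2$-actions on $Y$ commute, both of which are straightforward from the explicit formulas.
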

\begin{proof}
    Analogous to the proof of \cref{prop:commutator_at_identity}, we can identify $i^* \overline{c}_! \uQQ$ with the cohomology of
    \[ (\SU(2)/\U(1) \times \U(1)^2) / S_2 / \{ \pm 1 \}^2 \]
    where the two copies of $\{ \pm 1 \}$ act on the two copies of $\U(1)$ by negation, respectively. Note that the action of $\{ \pm 1 \}^2$ acts trivially on the cohomology of $\U(1)^2$, so it follows that $i^* \overline{c}_! \uQQ$ agrees with the cohomology of $(\SU(2)/\U(1) \times \U(1)^2) / S_2$, that is, agrees with $i^* c_! \uQQ$.
    
    Analogous to the proof of \cref{prop:commutator_at_general}, the pullback $U \times_{\SU(2)} \SO(3)^2 = \overline{c}^{-1}(U)$ is a trivial fiber bundle over $U$, whose fibers are $\overline{c}^{-1}(\{ -1 \}) = c^{-1}(\{ -1 \}) / \{ \pm 1 \}^2$. Recall that $c^{-1}(\{ -1 \}) \cong \SO(3)$, and note that $\{ \pm 1 \}^2$ acts on $\SO(3)$ via left multiplication by the subgroup generated by $\operatorname{diag}(-1, 1, -1)$ and $\operatorname{diag}(1, -1, -1)$. Since this action can be extended to a continuous action of $\SO(3)$, the induced action of $\{ \pm 1 \}^2$ on the cohomology of $c^{-1}(\{ -1 \})$ is trivial, and therefore $\overline{c}^{-1}(\{ -1 \})$ has the same cohomology as $c^{-1}(\{ -1 \})$. Hence, $j^* \overline{c}_! \uQQ = j^* c_! \uQQ$.
    
    Finally, one repeats the proof of \cref{prop:pushforward_along_commutator} to arrive at $\overline{c}_! \uQQ = c_! \uQQ$, as desired.
\end{proof}

\begin{corollary}
    \label{cor:poincare_polynomial_SO3_representation_varieties}
    For any $g \ge 1$, the $\SO(3)$-representation variety of $\Sigma_g$ consists of two connected components, whose Poincaré polynomials are equal to the Poincaré polynomial of the $\SU(2)$-representation variety and the Poincaré polynomial of the twisted $\SU(2)$-representation variety, respectively. \qed
\end{corollary}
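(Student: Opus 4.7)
The plan is to formalize the argument outlined in the two paragraphs immediately preceding the statement. Thanks to the factorization $c' = \rho \circ \overline{c}$ through the double cover $\rho \colon \SU(2) \to \SO(3)$ together with the preceding proposition $\overline{c}_! \uQQ = c_! \uQQ$, the computation of $H^*_c(\Rep{\SO(3)}{\Sigma_g}; \QQ)$ can be transported to $D^b(\SU(2))$, where the explicit expression for $\mathcal{F}_g$ from \cref{prop:explicit_expression_mathcal_Fn} is already available.

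First, I would establish the base compatibility
\[ \mu_{\SO(3)}(\rho_! \mathcal{F}, \rho_! \mathcal{G}) \cong \rho_! \, \mu_{\SU(2)}(\mathcal{F}, \mathcal{G}) \qquad \textup{for all } \mathcal{F}, \mathcal{G} \in D^b(\SU(2)) . \]
This follows from the group-homomorphism identity $\rho \circ m_{\SU(2)} = m_{\SO(3)} \circ (\rho \times \rho)$, the functoriality of $f_!$, and the Künneth-type identity $(\rho \times \rho)_!(\mathcal{F} \boxtimes \mathcal{G}) \cong \rho_! \mathcal{F} \boxtimes \rho_! \mathcal{G}$. Since $c'_! \uQQ = \rho_! \overline{c}_! \uQQ = \rho_! c_! \uQQ$, iterating the identity yields
\[ \underbrace{c'_! \uQQ * \cdots * c'_! \uQQ}_{g \textup{ times}} \cong \rho_! \mathcal{F}_g \qquad \textup{in } D^b(\SO(3)) . \]

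Next, \cref{prop:formula_cohomology_compact_support_as_pullback_of_convolution} applied to $G = \SO(3)$ with the inclusion $i' \colon \{1\} \to \SO(3)$, combined with proper base change along the cartesian square whose top row is $\{\pm 1\} = \rho^{-1}(\{1\}) \hookrightarrow \SU(2)$ over $\{1\} \hookrightarrow \SO(3)$, gives
\[ H^*_c(\Rep{\SO(3)}{\Sigma_g}; \QQ) \cong (i')^* \rho_! \mathcal{F}_g \cong i^* \mathcal{F}_g \oplus i_{-1}^* \mathcal{F}_g . \]
By \cref{thm:poincare_polynomial_SU2_representation_varieties} and \cref{cor:poincare_polynomial_twisted_SU2_representation_varieties}, the two direct summands are exactly the Poincaré polynomials of $\Rep{\SU(2)}{\Sigma_g}$ and $\RepTw{\SU(2)}{\Sigma_g}{-1}$, respectively.

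Finally, I would match this direct-sum decomposition with a decomposition into connected components. Any point of $\Rep{\SO(3)}{\Sigma_g}$ admits a lift $(A_1, B_1, \ldots, A_g, B_g) \in \SU(2)^{2g}$ whose commutator product $[A_1, B_1] \cdots [A_g, B_g]$ lies in $\rho^{-1}(\{1\}) = \{\pm 1\}$; this sign is independent of the chosen lift and defines a locally constant map $\Rep{\SO(3)}{\Sigma_g} \to \{\pm 1\}$ whose two fibers, both non-empty for $g \ge 1$ (e.g.\ $[i, j] = -1$ in $\SU(2)$), are precisely the two connected components and correspond bijectively to the two summands above. The only step that is not entirely routine is the verification of the compatibility $\mu_{\SO(3)}(\rho_!(-), \rho_!(-)) \cong \rho_! \, \mu_{\SU(2)}(-, -)$, but this is a direct manipulation of the six functors using only that $\rho$ is a group homomorphism.
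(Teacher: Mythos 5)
Your proof follows the same route the paper implicitly uses (the corollary is stated with no explicit proof, relying on the paragraph and proposition preceding it): push the commutator computation through the lift $\overline{c}$ to $\SU(2)$ using $\overline{c}_! \uQQ = c_! \uQQ$, and pull back at the end along $\{\pm 1\} \subset \SU(2)$ rather than $\{1\} \subset \SO(3)$. The compatibility $\mu_{\SO(3)}(\rho_!(\blank), \rho_!(\blank)) \cong \rho_! \, \mu_{\SU(2)}(\blank, \blank)$ that you isolate is exactly the functorial content the paper sweeps into the phrase ``the computation can be performed in $D^b(\SU(2))$,'' and your derivation of it via functoriality of $(\blank)_!$, the Künneth formula for $(\blank)_!$, and the homomorphism identity $m_{\SO(3)} \circ (\rho \times \rho) = \rho \circ m_{\SU(2)}$ is correct, as is the proper-base-change step giving $(i')^* \rho_! \mathcal{F}_g \cong i^* \mathcal{F}_g \oplus i_{-1}^* \mathcal{F}_g$.

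One small point deserves tightening: your sign map $\Rep{\SO(3)}{\Sigma_g} \to \{\pm 1\}$ shows the space splits into two nonempty clopen pieces, but you then assert these pieces ``are precisely the two connected components'' without justifying that each piece is itself connected. This does not follow from the clopen decomposition alone. It does follow from the cohomology you have just computed: the Poincaré polynomials of $\Rep{\SU(2)}{\Sigma_g}$ and $\RepTw{\SU(2)}{\Sigma_g}{-1}$ obtained from \cref{thm:poincare_polynomial_SU2_representation_varieties} and \cref{cor:poincare_polynomial_twisted_SU2_representation_varieties} both have constant term $1$, so each summand contributes $H^0 \cong \QQ$ and each piece is connected. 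Adding this one-line observation closes the only gap; otherwise your argument is sound and, if anything, more explicit than the paper's.
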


Next, let us consider the $\U(2)$-representation varieties of $\Sigma_g$. Note that the commutator of matrices in $\U(2)$ always lies in $\SU(2)$, so again we can use the tools developed for $D^b(\SU(2))$.
Moreover, the following argument shows how one can express the cohomology of $\Rep{\U(n)}{\Sigma_g}$ in terms of that of $\Rep{\PU(n)}{\Sigma_g}$ for any $n \ge 1$. First, denote by
\[ c \colon \SU(n)^2 \to \SU(n), \quad \overline{c} \colon \PU(n)^2 \to \SU(n), \quad \tilde{c} \colon \U(n)^2 \to \SU(n) \]
the various commutator maps.

\begin{proposition}
    \label{prop:Un_in_terms_of_PUn}
    For any $g \ge 0$ and $n \ge 1$, the rational cohomology of the $\U(n)$-representation variety of $\Sigma_g$ is given by
    \[ H^*(\Rep{\U(n)}{\Sigma_g}; \QQ) = H^*(\Rep{\PU(n)}{\Sigma_g}^0; \QQ) \otimes (\QQ[0] \oplus \QQ[-1])^{\otimes 2g} , \]
    where $\Rep{\PU(n)}{\Sigma_g}^0$ denotes the identity component of $\Rep{\PU(n)}{\Sigma_g}$.
\end{proposition}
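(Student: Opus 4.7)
The plan is to realize $\Rep{\U(n)}{\Sigma_g}$ as the total space of a principal $\U(1)^{2g}$-bundle over $\Rep{\PU(n)}{\Sigma_g}^0$ whose first Chern classes vanish rationally, and then observe that the associated Leray--Serre spectral sequence must degenerate.

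Since the scalars $\U(1) \cdot I$ are central in $\U(n)$, rescaling any of the factors $A_i, B_i$ by a scalar leaves every commutator unchanged. Hence the product-of-commutators map $\tilde c \colon \U(n)^{2g} \to \U(n)$, whose image lies in $\SU(n)$ because $\det[A,B] = 1$, factors as $\tilde c = \bar c \circ p$, where $p \colon \U(n)^{2g} \to \PU(n)^{2g}$ is the coordinate-wise quotient and $\bar c \colon \PU(n)^{2g} \to \SU(n)$ is the induced commutator map. The representation variety $\Rep{\PU(n)}{\Sigma_g}$ decomposes as the disjoint union $\bigsqcup_{\zeta \in \mu_n} \bar c^{-1}(\zeta I)$, and its identity component coincides with $\bar c^{-1}(I)$; in particular,
\[ \Rep{\U(n)}{\Sigma_g} = \tilde c^{-1}(I) = p^{-1}\bigl(\Rep{\PU(n)}{\Sigma_g}^0\bigr), \]
so $p$ restricts to a principal $\U(1)^{2g}$-bundle $\pi \colon \Rep{\U(n)}{\Sigma_g} \to \Rep{\PU(n)}{\Sigma_g}^0$.

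Next, I would argue that the $2g$ first Chern classes of $\pi$ vanish in rational cohomology. Each $c_i$ is a pullback, via the coordinate projection $\PU(n)^{2g} \to \PU(n)$ and the inclusion $\Rep{\PU(n)}{\Sigma_g}^0 \hookrightarrow \PU(n)^{2g}$, of the Chern class of the $\U(1)$-bundle $\U(n) \to \PU(n)$ in $H^2(\PU(n); \ZZ) = \ZZ/n$. Since $\PU(n)$ has the same rational cohomology as its universal cover $\SU(n)$, one has $H^2(\PU(n); \QQ) = 0$, so this class is torsion and the pullbacks $c_i \in H^2(\Rep{\PU(n)}{\Sigma_g}^0; \QQ)$ all vanish.

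Finally, the Leray--Serre spectral sequence for $\pi$ has $E_2$-page $H^p(\Rep{\PU(n)}{\Sigma_g}^0; \QQ) \otimes H^q(\U(1)^{2g}; \QQ)$, with trivial monodromy because the connected group $\U(1)^{2g}$ acts on itself by translation. As $H^*(\U(1)^{2g}; \QQ) \cong (\QQ[0] \oplus \QQ[-1])^{\otimes 2g}$ is an exterior algebra generated in degree one, the Leibniz rule reduces every differential to $d_2$ on these generators, which is cup product with the $c_i$. Since these vanish rationally, the spectral sequence degenerates at $E_2$ and yields the asserted tensor-product decomposition. I expect the main subtlety to be the identification $\Rep{\PU(n)}{\Sigma_g}^0 = \bar c^{-1}(I)$, requiring the connectedness of the latter, which I would invoke as a standard fact about surface representation varieties.
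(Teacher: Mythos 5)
Your proof is correct, and it takes a genuinely different route from the paper's. The paper uses the finite $n$-fold covering $\SU(n)\times\U(1)\to\U(n)$: via a diagram of cartesian squares it identifies $\Rep{\U(n)}{\Sigma_g}$ with the quotient of $\Rep{\SU(n)}{\Sigma_g}\times\U(1)^{2g}$ by the finite group $(\ZZ/n\ZZ)^{2g}$, which acts on the $\U(1)^{2g}$ factor by translations and hence trivially on its cohomology; the transfer isomorphism $H^*(X/H;\QQ)\cong H^*(X;\QQ)^H$ for finite $H$ then gives the tensor-product formula at once, with no spectral sequence. You instead realize $\Rep{\U(n)}{\Sigma_g}$ as a principal $\U(1)^{2g}$-bundle over $\Rep{\PU(n)}{\Sigma_g}^0$ coming from the central extension $\U(1)\to\U(n)\to\PU(n)$, note that the transgression of the degree-one fiber generators is the pullback of the Chern class of $\U(n)\to\PU(n)$, which is torsion since $H^2(\PU(n);\QQ)=0$, and conclude that the rational Leray--Serre spectral sequence degenerates at $E_2$ (once $d_2$ vanishes on the exterior generators, all higher differentials vanish for degree reasons, and there is no extension problem over $\QQ$). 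Both arguments rely on the same unproved identification $\bar c^{-1}(I)=\Rep{\PU(n)}{\Sigma_g}^0$, which the paper also just asserts, so you are on equal footing there. The paper's route is shorter and avoids spectral sequences; yours is more geometric and makes visible why rational coefficients are essential (the Chern classes are genuinely nonzero $n$-torsion).
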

\begin{proof}
    Denote by $\gamma, \overline{\gamma}, \tilde{\gamma}$ the maps $c^g, \overline{c}^g, \tilde{c}^g$ composed with the multiplication $\SU(n)^g \to \SU(n)$, respectively. Consider the following commutative diagram in which all squares are cartesian.
    \[ \begin{tikzcd}[column sep=-1.0em, row sep=1.0em]
        & \gamma^{-1}(1) \times \U(1)^{2g} \arrow{rr} \arrow{ld} \arrow{dd} &&  (\SU(n) \times \U(1))^{2g} \arrow{ld} \arrow{dd} \\
        \tilde{\gamma}^{-1}(1) \arrow{rr} \arrow{dd} && \U(n)^{2g} \arrow{dd} & \\
        & \gamma^{-1}(1) \arrow{rr} \arrow{ld} \arrow[bend left=20]{lddd} && \SU(n)^{2g} \arrow{ld} \arrow[bend left=20]{lddd}{\gamma}  \\
        \overline{\gamma}^{-1}(1) \arrow{rr} \arrow{dd} && \PU(n)^{2g} \arrow{dd}{\overline{\gamma}} & \\ \\
        \{ 1 \} \arrow{rr} && \SU(n)
    \end{tikzcd} \]
    In particular, $\tilde{\gamma}^{-1}(1)$ is the quotient of $\gamma^{-1}(1) \times \U(1)^{2g}$ by $(\ZZ/n\ZZ)^{2g}$. But, since $\ZZ/n\ZZ$ acts on $\U(1)$ by translation, it acts trivial on its cohomology. Therefore, the cohomology of $\tilde{\gamma}^{-1}(1)$ is simply the tensor product of the cohomology of $\gamma^{-1}(1) / (\ZZ/n\ZZ)^{2g} = \overline{\gamma}^{-1}(1) = \Rep{\PU(n)}{\Sigma_g}^0$ and the cohomology of $\U(1)^{2g}$, the latter of which is $(\QQ[0] \oplus \QQ[-1])^{\otimes 2g}$.
\end{proof}

\begin{remark}
    Note that a relation as in \cref{prop:Un_in_terms_of_PUn} need not hold for the $\U(n)$- and $\PU(n)$-representation varieties of any finitely generated group $\Gamma$. For instance, for $\Gamma = \ZZ/2\ZZ$, we have $|\Rep{\U(n)}{\Gamma}| = 2^n$ and $|\Rep{\PU(n)}{\Gamma}^0| = 2^{n - 1}$ for all $n \ge 1$.
\end{remark}




\section{\texorpdfstring{$\SU(2)$}{SU(2)}-character stacks}

In this section, we turn our attention to the $\SU(2)$-character stacks $\CharStck{\SU(2)}{\Sigma_g}$ of the closed orientable surfaces $\Sigma_g$ for various genera $g$. Even though the computations in \cref{sec:SU2_representation_varieties} only deal with the $\SU(2)$-\textit{representation variety} of $\Sigma_g$ (that is, not taking into account the action of $G$ by conjugation), it turns out that these computations can still largely be used to determine the cohomology of $\CharStck{\SU(2)}{\Sigma_g}$ and the twisted $\SU(2)$-character stacks $\CharStckTw{G}{\Sigma_g}{C}$.

Let $G = \SU(2)$ and write $G/G$ for the quotient stack of the action of $G$ acting on itself by conjugation. Denote by $i \colon \{ 1 \} / G \to G / G$ and $j \colon U / G \to G / G$ the closed immersion induced by the unit in $G$, and its open complement, respectively. Furthermore, write $q \colon * \to BG$, $q_U \colon U \to U/G$ and $q_G \colon G \to G/G$ for the quotient maps, and write $\pi \colon BG \to *$ for the final morphism to the point.

\begin{proposition}
    \label{prop:stacky_commutator_map}
    \[ Z_G \left(\bdgenus \circ \bdunit \right)(\uQQ) = i_* \uQQ[-3]^{\oplus 2} \oplus \Cone \Big( i_* \uQQ[-1] \oplus i_* \uQQ[-3] \xrightarrow{(\delta \; \varepsilon)} j_! (q_U)_* \uQQ \Big) \]
\end{proposition}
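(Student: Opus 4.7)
The plan is to mimic the proof of \cref{prop:pushforward_along_commutator} in the stacky setting, applied to $\mathcal{F} \defequality Z_G\!\left(\bdgenus \circ \bdunit\right)\!(\uQQ) = (c/G)_!\uQQ \in D(G/G)$, where $c/G \colon G^2/G \to G/G$ is the stacky commutator map (recall that the correspondence assigned to this bordism by $F_G$ is $* \leftarrow G^2/G \xrightarrow{c/G} G/G$). The first step is to invoke the localization triangle for the closed immersion $i \colon BG \to G/G$ and its open complement $j \colon U/G \to G/G$:
\[ j_!\,j^*\mathcal{F} \to \mathcal{F} \to i_*\,i^*\mathcal{F} \xrightarrow{+}. \]
The strategy is then to compute $i^*\mathcal{F}$ and $j^*\mathcal{F}$ separately, and to pin down the connecting morphism by an Ext calculation.

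For $i^*\mathcal{F}$: proper base change gives $i^*\mathcal{F} = \bar{c}_!\uQQ$ with $\bar{c} \colon X/G \to BG$ and $X = c^{-1}(1)$. The target answer is $i^*\mathcal{F} = \uQQ \oplus \uQQ[-2] \oplus \uQQ[-3]^{\oplus 2}$ as a sum of shifts of the constant sheaf on $BG$; the total cohomology is fixed by \cref{prop:commutator_at_identity}, and formality of the complex as a sheaf on $BG$ should follow from $G$ being connected (so acting trivially on $H^*(X;\QQ)$) together with the explicit $G$-equivariant parameterization of $X$ via $(\SU(2)/\U(1) \times \U(1)^2)/S_2$ from the proof of \cref{prop:commutator_at_identity}. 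For $j^*\mathcal{F}$: base change gives $j^*\mathcal{F} = \bar{c}'_!\uQQ$ with $\bar{c}' \colon c^{-1}(U)/G \to U/G$, and the plan is to identify this with $(q_U)_*\uQQ$. Pulling back along the atlas $q_U \colon U \to U/G$ produces $\uQQ \oplus \uQQ[-3]$ on both sides: on the left by \cref{prop:commutator_at_general}, and on the right by base change along the groupoid $U \times G \rightrightarrows U$ together with $H^*(G;\QQ) = \QQ \oplus \QQ[-3]$. A descent argument then promotes this agreement on the atlas to an isomorphism on the stack.

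Having identified $i^*\mathcal{F}$ and $j^*\mathcal{F}$, the next step is to compute the relevant Ext groups $\Hom_{D(G/G)}(i_*\uQQ, j_!(q_U)_*\uQQ[n])$ by applying $\Hom(-, j_!(q_U)_*\uQQ)$ to the triangle $j_!\uQQ \to \uQQ \to i_*\uQQ \xrightarrow{+}$, in direct analogy with \cref{prop:ext_groups_extensions_by_zero}; the expected answer is that only $\Ext^1$ (spanned by $\delta$) and $\Ext^3$ (spanned by $\varepsilon$) are nonzero. The connecting morphism of the localization triangle is then one of finitely many candidates, and the correct shape $(\delta\ \varepsilon)$ is pinned down by the requirement that pullback along the atlas $q_G \colon G \to G/G$ must recover \cref{prop:pushforward_along_commutator}: using proper base change and the identity $q_U^*(q_U)_*\uQQ = \uQQ \oplus \uQQ[-3]$, one computes $q_G^*j_!(q_U)_*\uQQ = j_!\uQQ \oplus j_!\uQQ[-3]$, which accounts precisely for the summand $j_!\uQQ[-3]$ that is present in \cref{prop:pushforward_along_commutator} but absent from the stacky formula.

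The main obstacle will be step (3), the identification of $j^*\mathcal{F}$ with $(q_U)_*\uQQ$ as sheaves on the stack $U/G$, not merely after pullback to its atlas. The $G$-action on $U$ has $\U(1)$-stabilizers (except at the fixed central element $-1$), so the descent step is non-trivial; the key geometric input is a $G$-equivariant trivialization of the $\SO(3)$-fiber bundle $c^{-1}(U) \to U$, which should follow from a $G$-equivariant deformation retraction of $U$ onto $\{-1\}$ (obtained, for instance, by noting that stereographic projection of $\SU(2) \cong S^3$ from the identity is equivariant with respect to the conjugation action, which becomes the rotation action on $\RR^3$).
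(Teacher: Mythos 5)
Your overall plan matches the paper's proof: apply the localization triangle to $\mathcal{F} = c_!\uQQ$ on $G/G$, compute $i^*\mathcal{F}$ and $j^*\mathcal{F}$ by base change, identify $j^*\mathcal{F}$ with $(q_U)_*\uQQ$, and pin down the connecting morphism via an Ext computation and pullback along $q_G$. Your proposed fix for the $j^*\mathcal{F}$ step, namely a $G$-equivariant retraction of $U$ onto the central fixed point $-1$ to trivialize the $\SO(3)$-bundle equivariantly and then compare with $q_U$, is a sensible way to make precise what the paper treats somewhat tersely (the paper observes that $G$ acts on the fiber $\SO(3)$ by left multiplication via the double cover, so that $(U\times_G G^2)/G \cong U/\{\pm1\}$, and then uses that $\{\pm1\}$ has trivial rational cohomology).

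There is, however, a genuine gap in your treatment of $i^*\mathcal{F}$. You claim formality of $c'_*\uQQ$ on $BG$ "should follow from $G$ being connected (so acting trivially on $H^*(X;\QQ)$) together with the explicit $G$-equivariant parameterization." Connectedness of $G$, equivalently triviality of the $G$-action on $H^*(X;\QQ)$, is \emph{not} sufficient: it only guarantees that the $E_2$-page of the relevant spectral sequence is $H^*(BG)\otimes H^*(X)$, not that the sequence degenerates. What is actually needed is \emph{equivariant formality}, and the paper establishes this via the standard torus fixed-point criterion: the fixed locus of the maximal torus $T\subset G$ on $X$ is $\U(1)^2$, so $\dim H^*(X^T;\QQ) = 4 = \dim H^*(X;\QQ)$, which forces $T$-equivariant (and hence $G$-equivariant) formality by \cite[Lemma C.24, Proposition C.26]{GuilleminGinzburgKarshon2002}. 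You would need to supply this (or an equivalent degeneracy argument extracted from the parameterization); as written, your justification does not establish the required splitting $i^*\mathcal{F} \cong \uQQ\oplus\uQQ[-2]\oplus\uQQ[-3]^{\oplus 2}$ in $D(BG)$.
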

\begin{proof}
    Analogous to \cref{subsection:commutator_map}, write $c \colon G^2 / G \to G/G$ for the map induced by the commutator map $(A, B) \mapsto [A, B]$. Let us understand $\mathcal{F} \defequality Z_G \left( \bdgenus \circ \bdunit \right) (\uQQ) = c_! \uQQ$ through the distinguished triangle
    \[ j_! j^* \mathcal{F} \to \mathcal{F} \to i_* i^* \mathcal{F} \xrightarrow{+} . \]
    As in the proof of \cref{prop:commutator_at_identity}, we have that $i^* \mathcal{F} = c'_! (i')^* \uQQ = c'_! \uQQ = c'_* \uQQ$, where $i'$ and $c'$ are given by the cartesian square
    \[ \begin{tikzcd}
        X/G \arrow{r}{i'} \arrow{d}{c'} & G^2/G \arrow{d}{c} \\ \{ 1 \} / G \arrow{r}{i} & G/G
    \end{tikzcd} \]
    with $X \defequality \{ 1 \} \times_G G^2 = \big\{ (A, B) \in G^2 \mid AB = BA \big\}$. Note that the subset $F \subset X$ of fixed points under the action of the maximal torus $T \subset G$ is equal to $F = \U(1)^2$. Hence, $\dim H^*(F; \QQ) = 4 = \dim H^*(X; \QQ)$, where the last equality follows by \cref{prop:commutator_at_identity}. This shows that the action of $T$ on $X$ is equivariantly formal \cite[Lemma C.24]{GuilleminGinzburgKarshon2002} and also the action of $G$ on $X$ is equivariantly formal \cite[Proposition C.26]{GuilleminGinzburgKarshon2002}, and hence $c'_* \QQ = \uQQ \boxtimes H^*(X; \QQ) = \uQQ[0] \oplus \uQQ[-2] \oplus \uQQ[-3]^{\oplus 2}$.

    Regarding $j^* \mathcal{F}$, one follows the proof of \cref{prop:commutator_at_general} to see that $j^* \mathcal{F} = c''_! \uQQ = c''_* \uQQ$ where $c''$ is given by the cartesian diagram
    \[ \begin{tikzcd}
        (U \times_G G^2)/G \arrow{r}{j'} \arrow{d}{c''} & G^2/G \arrow{d}{c} \\ U / G \arrow{r}{j} & G/G
    \end{tikzcd} \]
    Recall that the projection $U \times_G G^2 \to U$ is a trivial fiber bundle with fiber $\SO(3)$, and that the action of $G$ on the fiber $\SO(3)$ is given by left multiplication. Therefore, we have a commutative diagram
    \[ \begin{tikzcd}[row sep=0.5em]
        U \arrow[bend right=15]{rd} \arrow{rr}{q_U} & & U / G \\ & U / \{ \pm 1 \} = (U \times_G G^2) / G \arrow[swap, bend right=15]{ru}{c''} &
    \end{tikzcd} \]
    where $\{ \pm 1 \}$ acts trivially on $U$, which implies that $j^* \mathcal{F} = c''_* \uQQ = (q_U)_* \uQQ$.

    Finally, one follows the proof of \cref{prop:pushforward_along_commutator} to find that the connecting morphism $i_* i^* \mathcal{F}[-1] \to j_! j^* \mathcal{F}$ is given by
    \[ i_* \uQQ[-1] \oplus i_* \uQQ[-3] \oplus i_* \uQQ[-4]^{\oplus 2} \xrightarrow{\smatrix{\delta & \varepsilon & 0 & 0}} j_! (q_U)_* \uQQ . \qedhere \]
\end{proof}

For convenience, denote the $\Cone \Big( i_* \uQQ[-1] \oplus i_* \uQQ[-3] \xrightarrow{(\delta \; \varepsilon)} j_! (q_U)_* \uQQ \Big)$ by $\mathcal{V}$. 

\begin{definition}
    Define a sequence of objects $\mathcal{V}_n \in D(G/G)$ as follows. Put $\mathcal{V}_0 = i_* \uQQ$ and inductively define $\mathcal{V}_n = Z_G \left(\bdmultiplication[0.5]\right)(\mathcal{V}, \mathcal{V}_{n - 1})$ for all $n \ge 1$. In particular, $\mathcal{V}_1 = \mathcal{V}$.
\end{definition}

Now, in order to understand the cohomology of the $\SU(2)$-character stacks $\CharStck{G}{\Sigma_g}$, we must understand the objects $i^* \mathcal{V}_n$ in $D(BG)$. To give a good description, we make the following definition.

\begin{definition}
    \label{def:mathcal_G}
    Define a sequence of objects $\mathcal{G}_n \in D(BG)$ as follows. Put $\mathcal{G}_1 = q_* \uQQ$ and inductively define $\mathcal{G}_n = \Cone(\mathcal{G}_{n - 1}[-1] \xrightarrow{\alpha_n} q_* \uQQ[-4(n - 1)])$ for all $n \ge 2$, where $\alpha_n$ is a non-zero morphism. Note that this is well-defined, as the following proposition shows that $\Hom(\mathcal{G}_{n - 1}[-1], q_* \uQQ[-4(n - 1)]) = \Hom(q^* \mathcal{G}_{n - 1}, \uQQ[-4(n - 1) + 1]) = \QQ$ for all $n \ge 2$.
\end{definition}

\begin{proposition}
    \label{prop:properties_mathcal_G}
    For every $n \ge 1$, we have
    \begin{enumerate}[label=(\roman*)]
        \item $q^* \mathcal{G}_n = \uQQ \oplus \uQQ[-4n + 1]$
        \item $\pi_* \mathcal{G}_n = \pi_! \mathcal{G}_n = \bigoplus_{k = 0}^{n - 1} \QQ[-4k]$
    \end{enumerate}
\end{proposition}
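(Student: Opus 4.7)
The plan is to establish both statements simultaneously by induction on $n$. For the base case $n = 1$ with $\mathcal{G}_1 = q_* \uQQ$, proper base change along the atlas $q$ yields $q^* q_* \uQQ \cong H^*(G; \QQ) = \uQQ \oplus \uQQ[-3]$ since $G = \SU(2) \cong S^3$, matching (i). Since $G$ is compact, $q$ is proper, so $q_* = q_!$, and combined with $\pi q = \id_*$ we obtain $\pi_* q_* \uQQ = \pi_! q_! \uQQ = \uQQ$, matching (ii).

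For the inductive step for (i), apply the exact functor $q^*$ to the defining triangle
\[ \mathcal{G}_{n-1}[-1] \xrightarrow{\alpha_n} q_* \uQQ[-4(n-1)] \to \mathcal{G}_n \xrightarrow{+} \]
to produce, by the inductive hypothesis, a triangle in $D(*)$ whose first two terms are $\uQQ[-1] \oplus \uQQ[-4n+4]$ and $\uQQ[-4n+4] \oplus \uQQ[-4n+1]$. In $D(\Mod_\QQ)$, morphisms between shifts of $\uQQ$ vanish unless the shifts agree, so $q^* \alpha_n$ has only one potentially nonzero entry, between the $\uQQ[-4n+4]$ summands. To confirm this entry is nonzero I would invoke the adjunction $q^* \dashv q_*$: the adjoint $\tilde{\alpha}_n = \epsilon \circ q^* \alpha_n \colon q^* \mathcal{G}_{n-1}[-1] \to \uQQ[-4(n-1)]$ is nonzero since $\alpha_n$ is, and the counit $\epsilon$ is nonzero precisely on the degree-$0$ summand of $q^* q_* \uQQ = \uQQ \oplus \uQQ[-3]$ (as $\epsilon$ corresponds under adjunction to $\id_{q_* \uQQ}$, which forces a nonzero projection onto the summand accommodating a nonzero morphism to $\uQQ$). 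Computing the cone of such a map in $D(\Mod_\QQ)$ then gives $q^* \mathcal{G}_n \cong \uQQ \oplus \uQQ[-4n+1]$.

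For (ii), apply $\pi_*$ or $\pi_!$ to the same triangle. Using $\pi_* q_* = \pi_! q_! = \id_{D(*)}$ and the inductive hypothesis, the triangle becomes
\[ \bigoplus_{k=0}^{n-2} \QQ[-4k-1] \to \QQ[-4(n-1)] \to \pi_{(!)} \mathcal{G}_n \xrightarrow{+} . \]
Morphisms $\QQ[-4k-1] \to \QQ[-4(n-1)]$ in $D(\Mod_\QQ)$ require $4k+1 = 4(n-1)$, which never holds, so the connecting morphism vanishes and the triangle splits, yielding $\bigoplus_{k=0}^{n-1} \QQ[-4k]$ for both $\pi_*$ and $\pi_!$. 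The main obstacle will be the adjunction bookkeeping in step (i), namely pinning down that $q^* \alpha_n$ is genuinely nonzero on the expected summand rather than factoring through the other one; the rest of the proof reduces to straightforward degree-matching arguments that propagate cleanly through the induction.
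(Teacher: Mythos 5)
Your proof is correct and takes essentially the same inductive route as the paper: apply $q^*$ (resp.\ $\pi_*$, $\pi_!$) to the defining triangle of $\mathcal{G}_n$ and resolve the maps by degree considerations in $D(\Mod_\QQ)$, using $\pi q = \id_*$ and properness of $q$ for part (ii). Your explicit adjunction argument confirming $q^* \alpha_n \neq 0$ is a welcome filling-in of a step that the paper leaves implicit when it simply writes down the matrix $\smatrix{0 \; 1 \\ 0 \; 0}$.
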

\begin{proof}
    We prove these statements by induction on $n$. For $n = 1$, they follow from the definition $\mathcal{G}_1 = q_* \uQQ$. Let us now prove the statements for $n \ge 2$ assuming the statements hold for $n - 1$. Applying $q^*$ to the defining distinguished triangle for $\mathcal{G}_n$, we obtain
    \[ \begin{tikzcd}[row sep=0.5em]
        q^* \mathcal{G}_{n - 1}[-1] \arrow{r} & q^* q_* \uQQ[-4(n - 1)] \arrow{r} & q^* \mathcal{G}_n \arrow{r}{+} & \hphantom{} \\
        \uQQ[-1] \oplus \uQQ[-4(n - 1)] \arrow[equals]{u} \arrow{r}{{\smatrix{0 \; 1 \\ 0 \; 0}}} & \uQQ[-4(n - 1)] \oplus \uQQ[-4(n - 1) + 3] \arrow[equals]{u} & &
    \end{tikzcd} \]
    from which follows that $q^* \mathcal{G}_n = \uQQ \oplus \uQQ[-4n + 1]$, which proves \textit{(i)}. Similarly, applying $\pi_!$ to the defining distinguished triangle for $\mathcal{G}_n$, we obtain
    \[ \begin{tikzcd}[row sep=0.5em]
        \pi_! \mathcal{G}_{n - 1}[-1] \arrow{r} & \pi_! q_* \uQQ[-4(n - 1)] \arrow{r} & \pi_! \mathcal{G}_n \arrow{r}{+} & \hphantom{} \\
        \bigoplus_{k = 0}^{n - 2} \QQ[-4k - 1] \arrow{r}{0} \arrow[equals]{u} & \QQ[-4(n - 1)] \arrow[equals]{u} & &
    \end{tikzcd} \]
    from which follows that $\pi_! \mathcal{G}_n = \bigoplus_{k = 0}^{n - 2} \QQ[-4k]$, which proves \textit{(ii)}.
\end{proof}

The following two lemmas are needed for the proposition that succeeds them, in which we give a description of the pullbacks $i^* \mathcal{V}_n$ for $n = 1, 2, 3$.

\begin{lemma}
    \label{lemma:cone_Q_pi_Q}
    One has $\Hom_{\Sh(BG)}(\uQQ, q_* \uQQ) = \QQ$, and the non-trivial cone $\Cone(\uQQ \to q_* \uQQ)$ is isomorphic to $\uQQ[-3]$.
\end{lemma}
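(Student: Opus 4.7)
The Hom computation is immediate from the adjunction $q^* \dashv q_*$:
\[ \Hom_{D(BG)}(\uQQ, q_* \uQQ) = \Hom_{D(*)}(q^* \uQQ, \uQQ) = \Hom_{D(*)}(\QQ, \QQ) = \QQ , \]
so up to scalar the unique non-trivial morphism is the unit $\eta \colon \uQQ \to q_* \uQQ$ of this adjunction; denote its cone by $C$.

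To identify $C$, my plan is first to compute $q^* q_* \uQQ$ and then argue via cohomology sheaves. Applying proper base change (\cref{prop:proper_base_change}) to the cartesian square arising from $* \times_{BG} * \cong G$ (which is legitimate since $q$ is cohomologically proper, as $G$ is compact so $q_! = q_*$) yields $q^* q_* \uQQ \cong \pi_* \uQQ = H^*(G; \QQ) = \QQ[0] \oplus \QQ[-3]$, where $\pi \colon G \to *$ is the final map and $G = \SU(2) \cong S^3$. The zigzag identity for the adjunction then forces $q^* \eta$ to be a section of the counit $q^* q_* \uQQ \to \uQQ$, so that its cone in $D(*)$ is canonically $\QQ[-3]$.

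Next, I would show that every cohomology sheaf $H^n(q_* \uQQ)$ is constant on $BG$. It pulls back along the atlas $q$ to $H^n(q^* q_* \uQQ)$, which is $\QQ$ in degrees $n \in \{ 0, 3 \}$ and zero otherwise. Since a sheaf on $BG$ amounts to a continuous $G$-representation on a $\QQ$-module, and since $G = \SU(2)$ is connected, any such rank-one representation is trivial; hence $H^n(q_* \uQQ) \cong \uQQ$ for $n \in \{ 0, 3 \}$ and is zero otherwise.

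Finally, the long exact sequence of cohomology sheaves for the distinguished triangle $\uQQ \xrightarrow{\eta} q_* \uQQ \to C \xrightarrow{+}$ yields $H^n(C) = 0$ for $n \ne 3$ (using that $H^0(\eta) \colon \uQQ \to \uQQ$ pulls back to the identity, hence is itself an isomorphism) and $H^3(C) \cong \uQQ$. Since $C$ has cohomology concentrated in a single degree, it is canonically quasi-isomorphic to $\uQQ[-3]$, as claimed. The step I expect to be the main obstacle is the `constant sheaf' identification of the cohomology sheaves of $q_* \uQQ$: it relies both on the connectedness of $G$ and on the six-functor formalism for topological stacks behaving as in the classical, representable setting.
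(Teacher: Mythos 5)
Your proof is correct and follows essentially the same route as the paper: both compute the Hom group via the adjunction $q^* \dashv q_*$, both pull back to a point (you pull back along the atlas $q$, the paper along an arbitrary point $x \colon * \to BG$) to find $H^*(G;\QQ) = \QQ[0]\oplus\QQ[-3]$, and both conclude by invoking the connectedness of $G = \SU(2)$ to trivialize the resulting rank-one local system on $BG$. The paper phrases the last step as $\pi_1(BG,*) = \pi_0(G)$ being trivial, which is exactly your observation that a continuous rank-one $\QQ$-representation of a connected group must be trivial; your justification of proper base change via the compactness of $G$ (so that $q_! = q_*$) is a detail the paper leaves implicit but is correct.
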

\begin{proof}
    The adjunction $q^* \dashv q_*$ yields $\Hom_{\Sh(BG)}(\uQQ, q_* \uQQ) = \Hom_{\Sh(*)}(\uQQ, \uQQ) = H^0(*) = \QQ$. Denoting the described cone by $\mathcal{F}$, we find that, for any point $x \colon * \to BG$, we have $x^* \mathcal{F} = \Cone(x^* \uQQ \to x^* (q_* \uQQ)) = \QQ[-3]$, because $x^* (q_* \uQQ) = H^*(\{ x \} \times_{BG} *; \QQ) = H^*(G; \QQ) = \QQ \oplus \QQ[-3]$.
    Hence, we must have $\mathcal{F} = \mathcal{L}[-3]$ for some local system $\mathcal{L}$ over $BG$ of rank one. Since $\pi_1(BG, *) = \pi_0(G)$ is trivial, we must have $\mathcal{L} = \uQQ$.
\end{proof}

\begin{lemma}
    \label{lemma:multiplication_that_kills_the_G_action}
    For any objects $\mathcal{F} \in D(G)$ and $\mathcal{G} \in D(G/G)$, we have
    \[ Z_G \left( \bdmultiplication[0.5] \right) \left( (q_G)_! \mathcal{F} \boxtimes \mathcal{G} \right) = (q_G)_! \mu(\mathcal{F}, q_G^* \mathcal{G}) . \]
\end{lemma}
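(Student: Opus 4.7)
The plan is to unwind the functor $Z_G\left(\bdmultiplication[0.5]\right)$ using Proposition~\ref{prop:field_theory_surfaces}\textit{(ii)} into $(m/G)_! (\pi_1/G, \pi_2/G)^*$ and then repeatedly exchange $*/!$-pullbacks with $!$-pushforwards along the quotient maps. The key observation is that, since the action maps $\pi_i, m \colon G^2 \to G$ are $G$-equivariant (with respect to the diagonal conjugation action on $G^2$ and conjugation on $G$), the squares
\[ \begin{tikzcd}[column sep=1.5em, row sep=1.5em]
    G^2 \arrow{r}{\pi_i} \arrow{d}{q_{G^2}} & G \arrow{d}{q_G} \\ G^2/G \arrow{r}{\pi_i/G} & G/G
\end{tikzcd}
\qquad \text{and} \qquad
\begin{tikzcd}[column sep=1.5em, row sep=1.5em]
    G^2 \arrow{r}{m} \arrow{d}{q_{G^2}} & G \arrow{d}{q_G} \\ G^2/G \arrow{r}{m/G} & G/G
\end{tikzcd} \]
are cartesian, because $q_G$ and $q_{G^2}$ are $G$-torsors and pullback of a $G$-torsor along any map remains a $G$-torsor.

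First I would rewrite $(\pi_1/G, \pi_2/G)^*\bigl((q_G)_!\mathcal{F} \boxtimes \mathcal{G}\bigr)$ as $(\pi_1/G)^*(q_G)_!\mathcal{F} \otimes (\pi_2/G)^*\mathcal{G}$ by definition of $\boxtimes$. Then I would apply proper base change (\cref{prop:proper_base_change}) to the left cartesian square above to obtain $(\pi_1/G)^*(q_G)_!\mathcal{F} = (q_{G^2})_!\pi_1^*\mathcal{F}$. The projection formula then lets me pull $(q_{G^2})_!$ out of the tensor product, at the cost of pulling back the second factor along $q_{G^2}$:
\[ (q_{G^2})_!\pi_1^*\mathcal{F} \otimes (\pi_2/G)^*\mathcal{G} \;=\; (q_{G^2})_!\bigl(\pi_1^*\mathcal{F} \otimes q_{G^2}^*(\pi_2/G)^*\mathcal{G}\bigr). \]
The composite $(\pi_2/G)\circ q_{G^2}$ equals $q_G \circ \pi_2$ by commutativity of the right square, so $q_{G^2}^*(\pi_2/G)^*\mathcal{G} = \pi_2^* q_G^*\mathcal{G}$.

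Finally I would apply $(m/G)_!$ and use functoriality of $(-)_!$ together with $(m/G)\circ q_{G^2} = q_G \circ m$ to obtain
\[ (m/G)_!(q_{G^2})_!\bigl(\pi_1^*\mathcal{F}\otimes \pi_2^* q_G^*\mathcal{G}\bigr) \;=\; (q_G)_! m_!\bigl(\pi_1^*\mathcal{F}\otimes \pi_2^* q_G^*\mathcal{G}\bigr) \;=\; (q_G)_!\, \mu(\mathcal{F}, q_G^*\mathcal{G}), \]
which is the desired identity. There is no real obstacle here beyond bookkeeping; the only point that needs care is the verification that the two squares above are cartesian, which follows from the general fact that an equivariant map $X \to Y$ of $G$-spaces fits into a cartesian square $X \to Y \leftarrow Y/G \to X/G$, so I would state this explicitly at the start of the proof and then chain together proper base change, the projection formula, and functoriality.
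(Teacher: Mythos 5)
Your proof is correct and takes essentially the same route as the paper: unwind $Z_G$ via \cref{prop:field_theory_surfaces}\textit{(ii)}, use proper base change to move past the quotient maps, and recognize the result as $(q_G)_!\,\mu(\cdot,\cdot)$. The only cosmetic difference is that you split the external tensor product first and then invoke the projection formula to bring $(\pi_2/G)^*\mathcal{G}$ inside the pushforward, whereas the paper applies proper base change once to the larger cartesian square with top-left corner $G\times G/G$ (using implicitly that $(q_G)_!\mathcal{F}\boxtimes\mathcal{G}=(q_G\times\id)_!(\mathcal{F}\boxtimes\mathcal{G})$); both variants reach $(q_G)_! m_!(\pi_1^*\mathcal{F}\otimes\pi_2^*q_G^*\mathcal{G})$ by the same mechanism.
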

\begin{proof}
    Consider the following commutative diagram, in which both squares are cartesian.
    \[ \begin{tikzcd}[column sep=6em]
        G \times G/G \arrow[swap]{d}{q_G \times \id_{G/G}} & G^2 \arrow[swap]{l}{(\pi_1, q_G \pi_2)} \arrow{d}{q_{G^2}} \arrow{r}{m} & G \arrow{d}{q_G} \\
        G/G \times G/G & G^2/G \arrow[swap]{l}{(\pi_1/G, \pi_2/G)} \arrow{r}{m/G} & G/G
    \end{tikzcd} \]
    The bottom row corresponds to the field theory of the bordism $\bdmultiplication[0.5]$, see \cref{prop:field_theory_surfaces}, so it follows that
    \begin{align*}
        Z_G \left( \bdmultiplication[0.5] \right) \left( (q_G)_! \mathcal{F} \boxtimes \mathcal{G} \right)
            &= (m/G)_! (\pi_1/G, \pi_2/G)^* ((q_G)_! \mathcal{F} \boxtimes \mathcal{G}) \\
            &= (m/G)_! (\pi_{G^2})_!(\pi_1, q_G \pi_2)^* (\mathcal{F} \boxtimes \mathcal{G}) \\
            &= (q_G)_! m_! (\pi_1, q_G \pi_2)^* (\mathcal{F} \boxtimes \mathcal{G}) \\
            &= (q_G)_! m_! (\pi_1^* \mathcal{F} \otimes \pi_2^* q_G^* \mathcal{G}) \\
            &= (q_G)_! \mu(\mathcal{F}, q_G^* \mathcal{G}). \qedhere
    \end{align*}
\end{proof}

\begin{proposition}
    \label{prop:properties_mathcal_V}
    The following equalities hold: 
    \begin{enumerate}[label=(\roman*)]
        \item $i^* \mathcal{V}_1 = \uQQ \oplus \uQQ[-2]$
        \item $i^* \mathcal{V}_2 = \uQQ \oplus \uQQ[-2] \oplus \uQQ[-4] \oplus \uQQ[-6] \oplus \mathcal{G}_1[-6]$
        \item $i^* \mathcal{V}_3 = \uQQ \oplus \uQQ[-2] \oplus \uQQ[-4] \oplus \uQQ[-6]^{\oplus 3} \oplus \uQQ[-8]^{\oplus 2} \oplus \mathcal{G}_2[-6] \oplus \mathcal{G}_2[-8]$
    \end{enumerate}
\end{proposition}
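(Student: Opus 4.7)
Part (i) is immediate. Applying $i^*$ to the defining distinguished triangle for $\mathcal{V} = \mathcal{V}_1$ and using that $i^* j_! = 0$ (since $i$ and $j$ are complementary immersions) yields
\[
i^* \mathcal{V}_1 \;\cong\; \bigl(i^* i_* \uQQ[-1] \oplus i^* i_* \uQQ[-3]\bigr)[1] \;=\; \uQQ \oplus \uQQ[-2].
\]

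For $n = 2, 3$, the strategy is to compute $\mathcal{V}_n$ inductively from $\mathcal{V}_{n-1}$ via the rotated form of the triangle defining $\mathcal{V}$, namely
\[
j_!(q_U)_*\uQQ \;\longrightarrow\; \mathcal{V} \;\longrightarrow\; i_*\uQQ \oplus i_*\uQQ[-2] \;\xrightarrow{+}\; .
\]
Since $\mathcal{V}_0 = i_* \uQQ$ is the unit for the stacky convolution $Z_G\!\left(\bdmultiplication[0.5]\right)$ (the analogue of Proposition~\ref{prop:multiplication_by_one}), applying $Z_G\!\left(\bdmultiplication[0.5]\right)(\blank, \mathcal{V}_{n-1})$ produces a distinguished triangle
\[
Z_G\!\left(\bdmultiplication[0.5]\right)\!\bigl(j_!(q_U)_*\uQQ,\, \mathcal{V}_{n-1}\bigr) \;\longrightarrow\; \mathcal{V}_n \;\longrightarrow\; \mathcal{V}_{n-1} \oplus \mathcal{V}_{n-1}[-2] \;\xrightarrow{+}\; .
\]
After applying $i^*$ and invoking the inductive hypothesis for $i^*\mathcal{V}_{n-1}$, the task reduces to computing $i^* Z_G\!\left(\bdmultiplication[0.5]\right)\!\bigl(j_!(q_U)_*\uQQ, \mathcal{V}_{n-1}\bigr)$ and then identifying the connecting morphism.

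To handle the stacky convolution term, I would pass through the quotient map $q_G \colon G \to G/G$ using proper base change in the spirit of Lemma~\ref{lemma:multiplication_that_kills_the_G_action}: this re-expresses the convolution on $G/G$ in terms of the non-stacky $\mu$ on $D^b(G)$ applied to $j_! \uQQ$ and pullbacks of $\mathcal{V}_{n-1}$, which are controlled by the objects $\mathcal{U}_k$ and $\mathcal{E}_k$ of Propositions~\ref{prop:properties_mathcal_U} and \ref{prop:properties_mathcal_E}. Pushing forward via the generic $\U(1)$-stabilizer on $U/G$ produces, at the level of $i^*$, the pushforwards $q_*\uQQ$ from Definition~\ref{def:mathcal_G}, so that the iterated cones appearing in the induction naturally assemble into the $\mathcal{G}_k$'s predicted by the statement.

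The main obstacle is pinning down the connecting morphisms in these iterated triangles: one must verify that the induced map $\mathcal{G}_{n-1}[-1] \to q_*\uQQ[-4(n-1)]$ is non-zero at each step, so that the cone is the non-split $\mathcal{G}_n$ rather than a direct sum. As in the proof of Proposition~\ref{prop:pushforward_along_commutator}, this non-vanishing will be forced by cohomological dimension constraints: the Hom-space in which the connecting morphism lives is one-dimensional by Proposition~\ref{prop:properties_mathcal_G}(i), and the alternative (splitting) would contradict the total dimension of $\pi_* i^* \mathcal{V}_n$ or $\pi_! i^* \mathcal{V}_n$, which are already pinned down by the cohomology of $\CharStck{G}{\Sigma_g}$ (to be computed in the subsequent theorem) and by the equivariant formality arguments used in the proof of Proposition~\ref{prop:stacky_commutator_map}. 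Once the connecting morphisms are identified, the decompositions in (ii) and (iii) follow by direct bookkeeping of summands at each stage $n = 2, 3$.
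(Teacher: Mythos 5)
Your overall plan matches the paper's: part (i) is immediate from $i^* j_! = 0$; for $n = 2, 3$ you apply the exact convolution $Z_G\!\left(\bdmultiplication[0.5]\right)(\blank, \mathcal{V}_{n-1})$ to the defining triangle for $\mathcal{V}$ (your rotated triangle is equivalent to the paper's unrotated one), use unitality of $i_*\uQQ$ for the outer terms, reduce the middle term to non-equivariant $\mu$-products via Lemma~\ref{lemma:multiplication_that_kills_the_G_action}, and then identify the connecting morphism, so that the $q_*\uQQ$'s assemble into the $\mathcal{G}_k$'s. This structure is correct.

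However, the argument you give for pinning down the connecting morphism has a genuine gap: it is circular. You propose to force the non-vanishing by appealing to "the total dimension of $\pi_* i^* \mathcal{V}_n$ or $\pi_! i^* \mathcal{V}_n$, which are already pinned down by the cohomology of $\CharStck{G}{\Sigma_g}$ (to be computed in the subsequent theorem)." But Theorem~\ref{thm:cohomology_SU2_character_stacks} is \emph{deduced from} Proposition~\ref{prop:properties_mathcal_V} via $\pi_! i^*\mathcal{V}_g = H^*_c(\CharStck{G}{\Sigma_g};\QQ)$, so its content cannot be used as an input here. Your fallback, the equivariant formality used in the proof of Proposition~\ref{prop:stacky_commutator_map}, does not carry over either: that argument is specific to the $G$-action on the space of commuting pairs (the $g = 1$ representation variety), and no equivariant formality statement for $\Rep{G}{\Sigma_g}$ with $g \ge 2$ is established in the paper. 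What the paper actually uses is the \emph{non-circular} constraint $q^* i^* \mathcal{V}_n = (i')^* q_G^* \mathcal{V}_n$: by proper base change, $q_G^* \mathcal{V}_n$ is computed from the non-equivariant objects of Section~\ref{sec:SU2_representation_varieties} (Propositions~\ref{prop:properties_mathcal_U}, \ref{prop:properties_mathcal_E}, \ref{prop:explicit_expression_mathcal_Fn}), and comparing the $q^*$-pullback of each candidate cone against this known answer forces $\varphi$ (and, for $n=3$, the block matrix $\psi$) to have exactly the stated nonzero entries. You should replace the circular step with this pullback-to-$G$ comparison.
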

\begin{proof}
    By definition of $\mathcal{V}_1 = \mathcal{V}$, it is clear that $i^* \mathcal{V} = i^* i_* \uQQ \oplus i^* i_* \uQQ[-2] = \uQQ \oplus \uQQ[-2]$, which proves \textit{(i)}. For \textit{(ii)}, applying $i^* Z_G \left( \bdmultiplication[0.5] \right)(-, \mathcal{V}_1)$ to the distinguished triangle
    \begin{equation} \label{eq:triangle_for_mathcal_V} i_* \uQQ[-1] \oplus i_* \uQQ[-3] \xrightarrow{(\delta \; \varepsilon)} j_! (q_U)_* \uQQ \to \mathcal{V} \xrightarrow{+} \end{equation}
    we obtain the distinguished triangle
    \[ \begin{tikzcd}[row sep=0.5em]
        i^* \mathcal{V}_1[-1] \oplus i^* \mathcal{V}_1[-3] \arrow{r} & i^* Z_G \left( \bdmultiplication[0.5] \right) (j_! (q_U)_* \uQQ, \mathcal{V}_1) \arrow{r} & i^* \mathcal{V}_2 \arrow{r}{+} & \hphantom{} \\
        \uQQ[-1] \oplus \uQQ[-3]^{\oplus 2} \oplus \uQQ[-5] \arrow[equals]{u} \arrow{r}{\varphi} & q_* \uQQ[-3] \oplus q_* \uQQ[-6] \arrow[equals]{u} &
    \end{tikzcd} \]
    where the second equality is shown as follows. By \cref{lemma:multiplication_that_kills_the_G_action}, we have
    \begin{equation}
        \label{eq:going_to_non_equivariant_computations}
        i^* Z_G \left( \bdmultiplication[0.5] \right) (j_! (q_U)_! \uQQ, \mathcal{V}_1) = i^* (q_G)_! \mu((j')_! \uQQ, q_G^* \mathcal{V}_1) = q_! (i')^* \mu((j')_! \uQQ, q_G^* \mathcal{V}_1)
    \end{equation}
    where $i' \colon \{ 1 \} \to G$ and $j' \colon U \to G$ denote the closed immersion of the unit in $G$ and its open complement, respectively. But since $(j')_! \uQQ = \mathcal{U}$ and $q_G^* \mathcal{V}_1 = \mathcal{E} \oplus \mathcal{U}[-3]$ are known, the right-hand side is computed using \cref{prop:properties_mathcal_U,prop:properties_mathcal_E,lemma:mu_mathcal_E_U}. Finally, $q_! = q_*$ as $q$ is proper.
    
    Now, from the non-equivariant computations of \cref{sec:SU2_representation_varieties}, in particular \cref{prop:properties_mathcal_U,prop:properties_mathcal_E}, one finds that $q^* i^* \mathcal{V}_2 = (i')^* q_G^* \mathcal{V}_2 = (i')^* (\mathcal{E}_2 \oplus \mathcal{U}_2[-6] \oplus (i')_* \QQ[-6]^{\oplus 2}) = \QQ \oplus \QQ[-2] \oplus \QQ[-4] \oplus \QQ[-6]^{\oplus 2} \oplus \QQ[-9]$. Hence, the pullback of $\varphi$ along $q$ must be given by
    \[ \QQ[-1] \oplus \QQ[-3]^{\oplus 2} \oplus \QQ[-5] \xrightarrow{\smatrix{0 \; 1 \; 0 \; 0 \\ 0 \; 0 \; 0 \; 0 \\ 0 \; 0 \; 0 \; 0 \\ 0 \; 0 \; 0 \; 0}} \QQ[-3] \oplus \QQ[-6]^{\oplus 2} \oplus \QQ[-9] \]
    and subsequently we must have $\varphi = \smatrix{0 & 1 & 0 & 0 \\ 0 & 0 & 0 & 0}$, where the `$1$' indicates the non-trivial morphism in $\Hom(\uQQ[-3], q_* \uQQ[-3]) = \Hom(q^* \uQQ, \QQ) = \Hom(\QQ, \QQ) = \QQ$. Using \cref{lemma:cone_Q_pi_Q}, this proves \textit{(ii)}.
    
    One proves \textit{(iii)} in a similar way: applying $i^* Z_G \left( \bdmultiplication[0.5] \right)(-, \mathcal{V}_2)$ to \eqref{eq:triangle_for_mathcal_V} we obtain the distinguished triangle
    \[ \begin{tikzcd}[row sep=0.5em]
        i^* \mathcal{V}_2[-1] \oplus i^* \mathcal{V}_2[-3] \arrow{r} & i^* Z_G \left( \bdmultiplication[0.5] \right) (j_! (q_U)_* \uQQ, \mathcal{V}_2) \arrow{r} & i^* \mathcal{V}_3 \arrow{r}{+} & \hphantom{} \\
        \begin{tabular}{c} $\uQQ \oplus \uQQ[-3]^{\oplus 2} \oplus \uQQ[-5]^{\oplus 2} \oplus \uQQ[-7]^{\oplus 2}$ \\ $\oplus \, \uQQ[-9] \oplus \mathcal{G}_1[-7] \oplus \mathcal{G}_1[-9]$ \end{tabular} \arrow[equals]{u} \arrow{r}{\psi} & \begin{tabular}{c} $q_* \uQQ[-3] \oplus q_* \uQQ[-5]$ \\ $\oplus \, q_* \uQQ[-10] \oplus q_* \uQQ[-12]$ \end{tabular} \arrow[equals]{u} &
    \end{tikzcd} \]
    Again, from the non-equivariant computations of \cref{sec:SU2_representation_varieties}, one finds that $q^* i^* \mathcal{V}_3 = \QQ \oplus \QQ[-2] \oplus \QQ[-4] \oplus \QQ[-6]^{\oplus 4} \oplus \QQ[-8]^{\oplus 3} \oplus \QQ[-13] \oplus \QQ[-15]$. From this, one determines $q^* \psi$ and concludes that $\psi$ must be given by
    \[ \psi = \begin{pmatrix}
        0 & 1 & 0 & 0 & 0 & 0 & 0 & 0 & 0 & 0 \\
        0 & 0 & 0 & 1 & 0 & 0 & 0 & 0 & 0 & 0 \\
        0 & 0 & 0 & 0 & 0 & 0 & 0 & 0 & \alpha_2[-6] & 0 \\
        0 & 0 & 0 & 0 & 0 & 0 & 0 & 0 & 0 & \alpha_2[-8]
    \end{pmatrix} \]
    where $\alpha_2 \ne 0$ is the morphism as in \cref{def:mathcal_G}. This proves \textit{(iii)}.
\end{proof}

\begin{theorem}
    \label{thm:cohomology_SU2_character_stacks}
    The cohomology of the $\SU(2)$-character stacks of $\Sigma_g$ for $g = 1, 2, 3$ is given by
    \begin{enumerate}[label=(\roman*)]
        \item $H_c^*(\CharStck{\SU(2)}{\Sigma_1}; \QQ) = \Big( \QQ[-3]^{\oplus 2} \oplus \QQ[-2] \oplus \QQ \Big) \otimes \bigoplus_{k \ge 0} \QQ[4k + 3]$
        \item $H_c^*(\CharStck{\SU(2)}{\Sigma_2}; \QQ) = \QQ[-6] \oplus \Big( \QQ \oplus \QQ[-2] \oplus \QQ[-3]^{\oplus 4} \oplus \QQ[-4] \oplus \QQ[-5]^{\oplus 4} \oplus \QQ[-6]^{\oplus 5} \Big) \otimes \bigoplus_{k \ge 0} \QQ[4k + 3]$
        \item $H_c^*(\CharStck{\SU(2)}{\Sigma_3}; \QQ) = \QQ[-6] \oplus \QQ[-8] \oplus \QQ[-9]^{\oplus 6} \oplus \QQ[-10] \oplus \QQ[-12] \oplus \Big( \QQ \oplus \QQ[-2] \oplus \QQ[-3]^{\oplus 6} \oplus \QQ[-4] \oplus \QQ[-5]^{\oplus 6} \oplus \QQ[-6]^{\oplus 15} \oplus \QQ[-7]^{\oplus 6} \oplus \QQ[-6]^{\oplus 14} \oplus \QQ[-9]^{\oplus 14} \Big) \otimes \bigoplus_{k \ge 0} \QQ[4k + 3]$
        \item The cohomology $H^*(\CharStck{\SU(2)}{\Sigma_g}; \QQ)$ for $g = 1, 2, 3$ is given by the above expressions replacing the terms $\QQ[4k + 3]$ by $\QQ[-4k]$.
    \end{enumerate}
\end{theorem}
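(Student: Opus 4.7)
The plan is to apply the TQFT $Z_G$ to the decomposition
\[
\Sigma_g \;=\; \bdcounit \,\circ\, \underbrace{W \,*\, W \,*\, \cdots \,*\, W}_{g \text{ times}}
\]
recorded in \eqref{eq:ZG_Sigma_g_from_convolution_mathcal_F}, where $*$ denotes the multiplication $Z_G\!\left(\bdmultiplication[0.5]\right) \circ \boxtimes$ and $W$ is the punctured-torus bordism. Writing $\mathcal{F} = Z_G(W)(\uQQ) \in D(G/G)$, functoriality together with \cref{prop:field_theory_surfaces}\textit{(iv)} gives
\[
H^*_c(\CharStck{\SU(2)}{\Sigma_g}; \QQ) \;=\; Z_G(\Sigma_g)(\uQQ) \;=\; \pi_! \, i^* (\mathcal{F}^{*g}),
\]
and \cref{prop:stacky_commutator_map} supplies the splitting $\mathcal{F} = i_* \uQQ[-3]^{\oplus 2} \oplus \mathcal{V}$.

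The key intermediate step is a binomial expansion. Since $i_* \uQQ$ is a two-sided unit for $*$ (the stacky analogue of \cref{prop:multiplication_by_one}, proved by the same proper base change and projection formula manipulation), bilinearity of $*$ gives
\[
\mathcal{F}^{*g} \;=\; \bigoplus_{k=0}^{g} \binom{g}{k}\, 2^{g-k}\, \mathcal{V}_k[-3(g-k)].
\]
Pulling back along $i \colon BG \to G/G$ and substituting the explicit descriptions of $i^* \mathcal{V}_k$ for $k \in \{1,2,3\}$ provided by \cref{prop:properties_mathcal_V} decomposes $i^* \mathcal{F}^{*g}$ into a direct sum of shifts of the constant sheaf $\uQQ$ on $BG$ and of the auxiliary objects $\mathcal{G}_\ell$.

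The final step is to apply $\pi_!$. From \cref{ex:group_homology_and_cohomology_SU2} we have $\pi_! \uQQ_{BG} = \bigoplus_{k \ge 0} \QQ[4k+3]$, and \cref{prop:properties_mathcal_G}\textit{(ii)} yields $\pi_! \mathcal{G}_n = \bigoplus_{k=0}^{n-1} \QQ[-4k]$. Combining these inputs with the expansion above and collecting terms of each type produces the formulas \textit{(i)}--\textit{(iii)}. For \textit{(iv)}, observe that $\Rep{\SU(2)}{\Sigma_g}$ is compact, so the map $\rho \colon \CharStck{\SU(2)}{\Sigma_g} \to BG$ has compact fibres; proper base change then identifies $\rho_* \uQQ$ with $i^* \mathcal{F}^{*g}$ as well, whence $H^*(\CharStck{\SU(2)}{\Sigma_g}; \QQ) = \pi_* i^* \mathcal{F}^{*g}$. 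Since $\pi_* \mathcal{G}_n = \pi_! \mathcal{G}_n$ by \cref{prop:properties_mathcal_G}\textit{(ii)}, the only change required is to replace the factor $\pi_! \uQQ_{BG}$ by $\pi_* \uQQ_{BG} = \bigoplus_{k \ge 0} \QQ[-4k]$.

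The principal difficulty is combinatorial — careful bookkeeping of shifts and multiplicities when assembling the pieces $\mathcal{V}_k$ and $\mathcal{G}_\ell$ into the final answer — the one more delicate point being the comparison $\rho_! \uQQ \cong \rho_* \uQQ$ used in \textit{(iv)}, which rests on the compactness of the fibres of $\rho$.
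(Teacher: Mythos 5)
Your proposal is correct and takes essentially the same approach as the paper: expand $\mathcal{F}^{*g}$ using the decomposition from \cref{prop:stacky_commutator_map} and unit property of $i_*\uQQ$, substitute the formulas for $i^*\mathcal{V}_k$ from \cref{prop:properties_mathcal_V}, and apply $\pi_!$ (resp.\ $\pi_*$ for \textit{(iv)}) via \cref{ex:group_homology_and_cohomology_SU2} and \cref{prop:properties_mathcal_G}. The only differences are cosmetic: you make the binomial expansion $\mathcal{F}^{*g} = \bigoplus_{k=0}^g \binom{g}{k}2^{g-k}\mathcal{V}_k[-3(g-k)]$ explicit where the paper leaves it implicit, and for \textit{(iv)} you phrase the needed identification $f_!\uQQ = f_*\uQQ$ in terms of compactness of fibres of $\CharStck{G}{\Sigma_g}\to BG$ rather than the paper's terse ``$f_*=f_!$''.
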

\begin{proof}
    The cohomology with compact support of the $\SU(2)$-character stack of $\Sigma_g$ is given by $Z_G(\Sigma_g)(\uQQ)$. Hence, \textit{(i)}, \textit{(ii)} and \textit{(iii)} can be computed using \eqref{eq:ZG_Sigma_g_from_convolution_mathcal_F}, in which $\mathcal{F}$ is given by \cref{prop:stacky_commutator_map}, and $Z_G \left( \bdcounit \right) = \pi_! i^*$ applied to $\mathcal{V}_1, \mathcal{V}_2$ and $\mathcal{V}_3$ can be computed using \cref{prop:properties_mathcal_V} and the facts that $\pi_! \uQQ = \bigoplus_{k \ge 0} \QQ[4k + 3]$ (see \cref{ex:group_homology_and_cohomology_SU2}) and $\pi_! \mathcal{G}_n = \bigoplus_{k = 0}^{n - 1} \QQ[-4k]$ (see \cref{prop:properties_mathcal_G}).
    For \textit{(iv)}, note that the map $\CharStck{G}{\Sigma_g} \to *$ factors as $\CharStck{G}{\Sigma_g} \xrightarrow{f} BG \xrightarrow{\pi} *$. Since $f_* = f_!$, we find that $H^*(\CharStck{G}{\Sigma_g}; \QQ)$ can be computed as $\pi_* f_* \uQQ = \pi_* f_! \uQQ$, and note that $\pi_* \uQQ = \bigoplus_{k \ge 0} \QQ[-4k]$ (see \cref{ex:group_homology_and_cohomology_SU2}).
\end{proof}

\subsection{Twisted \texorpdfstring{$\SU(2)$}{SU(2)}-character stacks}

We conclude by computing the cohomology of the twisted $\SU(2)$-character stacks for $g = 1, 2, 3$, using the same strategy as in the previous section. Denote by $i_{-1} \colon \{ -1 \}/G \to G/G$ the closed immersion induced by the inclusion of $-1 \in G$.

\begin{proposition}
    The following equalities hold:
    \begin{enumerate}[label=(\roman*)]
        \item $i_{-1}^* \mathcal{V}_1 = \mathcal{G}_1$
        \item $i_{-1}^* \mathcal{V}_2 = \mathcal{G}_2 \oplus \mathcal{G}_2[-2]$
        \item $i_{-1}^* \mathcal{V}_3 = \mathcal{G}_1[-6] \oplus \mathcal{G}_3 \oplus \mathcal{G}_3[-2] \oplus \mathcal{G}_3[-4]$
    \end{enumerate}
\end{proposition}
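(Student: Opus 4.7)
The plan is to adapt the proof of Proposition \ref{prop:properties_mathcal_V}, replacing the closed immersion $i \colon \{1\}/G \to G/G$ by $i_{-1}$ throughout. The essential simplification is that $i_{-1}^* i_* = 0$ (since $-1 \ne 1$), which kills all $i_* \uQQ[-k]$ components of $\mathcal{V}$ and leaves only the ``commutator at a general element'' contribution. For (i), this immediately gives $i_{-1}^* \mathcal{V}_1 = i_{-1}^* j_!(q_U)_* \uQQ$. Factoring $i_{-1} = j \circ i''_{-1}$ with $i''_{-1} \colon \{-1\}/G \to U/G$ and using $j^* j_! = \id$, this reduces to $(i''_{-1})^*(q_U)_* \uQQ$. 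Because $-1$ is central in $\SU(2)$, the $G$-orbit of $-1 \in U$ is $\{-1\}$ itself, so the relevant cartesian square has $q \colon * \to BG$ on the left; as $q_U$ is proper ($G$ being compact), proper base change yields $q_* \uQQ = \mathcal{G}_1$.

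For (ii) and (iii) I would induct on $n$, applying $i_{-1}^* Z_G\left(\bdmultiplication[0.5]\right)(-, \mathcal{V}_{n-1})$ to the defining triangle of $\mathcal{V}$. The two $i_*\uQQ[-k]$ terms give $i_{-1}^* \mathcal{V}_{n-1}[-1] \oplus i_{-1}^* \mathcal{V}_{n-1}[-3]$, known by induction. The remaining term $i_{-1}^* Z_G\left(\bdmultiplication[0.5]\right)(j_!(q_U)_*\uQQ, \mathcal{V}_{n-1})$ is handled via \cref{lemma:multiplication_that_kills_the_G_action}: since $j_!(q_U)_*\uQQ = (q_G)_! j'_!\uQQ$ with $j' \colon U \to G$, it rewrites as $q_!(i'_{-1})^* \mu(j'_!\uQQ, q_G^*\mathcal{V}_{n-1})$, whose interior is a non-equivariant multiplication computable via \cref{prop:properties_mathcal_U,prop:properties_mathcal_E,lemma:mu_mathcal_E_U}, exactly as in the proof of \cref{prop:explicit_expression_mathcal_Fn}.

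To identify the connecting morphism, note that its matrix components land in groups of the form $\Hom_{D(BG)}(\uQQ[-k], q_*\uQQ[-\ell])$, which by \cref{lemma:cone_Q_pi_Q} and \cref{prop:properties_mathcal_G}\textit{(i)} are at most one-dimensional. To pin down which components are nonzero, I would pull back along $q^*$ and match total dimensions against the already-known Poincaré polynomial of the twisted representation variety from \cref{cor:poincare_polynomial_twisted_SU2_representation_varieties}. Non-vanishing of the appropriate diagonal entries then forces the cone to assemble into copies of $\mathcal{G}_n$ via the recursion $\mathcal{G}_n = \Cone(\mathcal{G}_{n-1}[-1] \to q_*\uQQ[-4(n-1)])$ of \cref{def:mathcal_G}.

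The hardest part will be the bookkeeping. For (iii) the middle term alone produces on the order of a dozen summands, and one must verify simultaneously that all ``off-diagonal'' components of the connecting morphism vanish and that the ``diagonal'' $\alpha_m$'s are nonzero, so that genuine cones (rather than split extensions) arise. As in \cref{prop:properties_mathcal_V}, the only available tool is the conservativity of $q^*$ combined with the rigidity of the non-equivariant answer from \cref{sec:SU2_representation_varieties}, but the combinatorial matching of shifts and multiplicities becomes the principal labor.
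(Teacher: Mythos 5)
Your overall strategy matches the paper's: kill the $i_*$-components of $\mathcal{V}$ under $i_{-1}^*$, reduce the middle term of the triangle to a non-equivariant computation via \cref{lemma:multiplication_that_kills_the_G_action}, and pin down the connecting morphism by applying the conservative functor $q^*$ and comparing against the non-equivariant answer. Part (i) is also fine as you describe it.

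There is, however, a genuine error in your proposed reference object for parts (ii) and (iii). You suggest matching $q^* i_{-1}^* \mathcal{V}_n$ against the Poincaré polynomial of the twisted representation variety from \cref{cor:poincare_polynomial_twisted_SU2_representation_varieties}. But that cohomology is $(i'_{-1})^* \mathcal{F}_n$ with $\mathcal{F}_n = (c_n)_! \uQQ$, whereas $q^* i_{-1}^* \mathcal{V}_n = (i'_{-1})^* q_G^* \mathcal{V}_n$ with $q_G^* \mathcal{V}_n = \mu(q_G^* \mathcal{V}, \ldots, q_G^* \mathcal{V})$ and $q_G^* \mathcal{V} = \mathcal{E} \oplus j_! \uQQ[-3]$. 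Since $\mathcal{V}$ is only a direct summand of $Z_G\big(\bdgenus[0.6] \circ \bdunit[0.6]\big)(\uQQ)$ --- the factor $i_* \uQQ[-3]^{\oplus 2}$ from \cref{prop:stacky_commutator_map} has been discarded --- the iterated stacky product $\mathcal{V}_n$ is \emph{not} $Z_G$ of the genus-$n$ handle body, and $q_G^* \mathcal{V}_n \ne \mathcal{F}_n$. Concretely, for $n = 2$ one has $q^* i_{-1}^* \mathcal{V}_2 = \QQ \oplus \QQ[-2] \oplus \QQ[-7] \oplus \QQ[-9]$ (total dimension $4$), while $(i'_{-1})^* \mathcal{F}_2$ has total dimension $12$; matching against the latter would not let you pin down $\varphi$ and, if forced, would give a wrong answer. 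The correct thing to do --- and what the paper does --- is compute $(i'_{-1})^* q_G^* \mathcal{V}_n$ directly from the decomposition $q_G^* \mathcal{V}_n = (\mathcal{E} \oplus \mathcal{U}_1[-3])^{*n}$ using \cref{prop:properties_mathcal_U,prop:properties_mathcal_E,lemma:mu_mathcal_E_U}. With that substitution your outline is correct.
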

\begin{proof}
    By definition of $\mathcal{V}_1 = \mathcal{V}$, it is clear that $i_{-1}^* \mathcal{V} = i_{-1}^* j_! (q_U)_* \uQQ = q_* \uQQ$, which proves \textit{(i)}. For \textit{(ii)}, applying $i_{-1}^* Z_G \left( \bdmultiplication[0.5] \right)(-, \mathcal{V}_1)$ to the distinguished triangle \eqref{eq:triangle_for_mathcal_V} we obtain the distinguished triangle
    \[ \begin{tikzcd}[row sep=0.5em]
        i_{-1}^* \mathcal{V}_1[-1] \oplus i_{-1}^* \mathcal{V}_1[-3] \arrow{r} & i_{-1}^* Z_G \left( \bdmultiplication[0.5] \right) (j_! (q_U)_* \uQQ, \mathcal{V}_1) \arrow{r} & i_{-1}^* \mathcal{V}_2 \arrow{r}{+} & \hphantom{} \\
        \mathcal{G}_1[-1] \oplus \mathcal{G}_1[-3] \arrow[equals]{u} \arrow{r}{\varphi} & q_* \uQQ[-4] \oplus q_* \uQQ[-6] \arrow[equals]{u} &
    \end{tikzcd} \]
    where the second equality is shown analogous to \eqref{eq:going_to_non_equivariant_computations}, only replacing $i'$ by $i'_{-1}$.
    From the non-equivariant computations of \cref{sec:SU2_representation_varieties}, one finds that $q^* i_{-1}^* \mathcal{V}_2 = (i'_{-1})^* q_G^* \mathcal{V}_2 = (i'_{-1})^* (\mathcal{E}_2 \oplus \mathcal{U}_2[-6] \oplus (i')_* \QQ[-6]^{\oplus 2}) = \QQ \oplus \QQ[-2] \oplus \QQ[-7] \oplus \QQ[-9]$.
    Hence, the pullback of $\varphi$ along $q$ must be given by
    \[ \QQ[-1] \oplus \QQ[-3] \oplus \QQ[-4] \oplus \QQ[-6] \xrightarrow{\smatrix{0 \; 0 \; 1 \; 0 \\ 0 \; 0 \; 0 \; 1 \\ 0 \; 0 \; 0 \; 0 \\ 0 \; 0 \; 0 \; 0}} \QQ[-4] \oplus \QQ[-6] \oplus \QQ[-7] \oplus \QQ[-9] \]
    and subsequently, we must have $\varphi = \smatrix{\alpha_2 & 0 \\ 0 & \alpha_2[-2]}$ with $\alpha_2$ as in \cref{def:mathcal_G}, proving \textit{(ii)}.

    One proves \textit{(iii)} in a similar way: applying $i_{-1}^* Z_G \left( \bdmultiplication[0.5] \right) (-, \mathcal{V}_2)$ to \eqref{eq:triangle_for_mathcal_V} we obtain the distinguished triangle
    \[ \begin{tikzcd}[row sep=0.5em]
        i_{-1}^* \mathcal{V}_2[-1] \oplus i_{-1}^* \mathcal{V}_2[-3] \arrow{r} & i_{-1}^* Z_G \left( \bdmultiplication[0.5] \right) (j_! (q_U)_* \uQQ, \mathcal{V}_2) \arrow{r} & i_{-1}^* \mathcal{V}_3 \arrow{r}{+} & \hphantom{} \\ \mathcal{G}_2[-1] \oplus \mathcal{G}_2[-3]^{\oplus 2} \oplus \mathcal{G}_2[-5] \arrow[equals]{u} \arrow{r}{\psi} & \begin{tabular}{c} $q_* \QQ[-3] \oplus q_* \QQ[-6]^{\oplus 2} \oplus q_* \QQ[-8]$ \\ $\oplus \, q_* \QQ[-10] \oplus q_* \QQ[-12]$ \end{tabular} \arrow[equals]{u} &
    \end{tikzcd} \]
    Again, from the non-equivariant computations of \cref{sec:SU2_representation_varieties}, one finds that $q^* i_{-1}^* \mathcal{V}_3 = \QQ \oplus \QQ[-2] \oplus \QQ[-4] \oplus \QQ[-6]^{\oplus 3} \oplus \QQ[-9]^{\oplus 3} \oplus \QQ[-11] \oplus \QQ[-13] \oplus \QQ[-15]$. From this, one determines $q^* \psi$ and concludes that $\psi$ must be given by
    \[ \psi = \begin{pmatrix}
        0 & 1 & 0 & 0 \\
        0 & 0 & 0 & 0 \\
        0 & 0 & 0 & 0 \\
        \alpha_3 & 0 & 0 & 0 \\
        0 & 0 & \alpha_3[-2] & 0 \\
        0 & 0 & 0 & \alpha_3[-4]
    \end{pmatrix} \]
    which proves \textit{(iii)}.
\end{proof}

Analogous to \cref{thm:cohomology_SU2_character_stacks}, we obtain the following theorem.

\begin{theorem}
    \label{thm:cohomology_twisted_SU2_character_stacks}
    The cohomology and cohomology with compact support of the twisted $\SU(2)$-character stacks for $g = 1, 2, 3$ agree, and are given by
    \begin{enumerate}[label=(\roman*)]
        \item $H^*(\mathfrak{X}^\textup{tw}_{\SU(2))}(\Sigma_1); \QQ) = \QQ$
        \item $H^*(\mathfrak{X}^\textup{tw}_{\SU(2))}(\Sigma_2); \QQ) = \QQ \oplus \QQ[-2] \oplus \QQ[-3]^{\oplus 4} \oplus \QQ[-4] \oplus \QQ[-6]$ 
        \item $H^*(\mathfrak{X}^\textup{tw}_{\SU(2))}(\Sigma_3); \QQ) = \QQ \oplus \QQ[-2] \oplus \QQ[-3]^{\oplus 6} \oplus \QQ[-4]^{\oplus 2} \oplus \QQ[-5]^{\oplus 6} \oplus \QQ[-6]^{\oplus 16} \oplus \QQ[-7]^{\oplus 6} \oplus \QQ[-8]^{\oplus 2} \oplus \QQ[-9]^{\oplus 6} \oplus \QQ[-10] \oplus \QQ[-12]$ 
        \qed
    \end{enumerate}
\end{theorem}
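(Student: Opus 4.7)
The plan is to mimic the proof of \cref{thm:cohomology_SU2_character_stacks}, replacing the pullback $i^*$ along $\{1\}/G \hookrightarrow G/G$ with the pullback $i_{-1}^*$ along $\{-1\}/G \hookrightarrow G/G$. Concretely, both the cohomology and the cohomology with compact support of the twisted character stack $\CharStckTw{\SU(2)}{\Sigma_g}{-1}$ should be recovered from $i_{-1}^* \mathcal{F}^{*g}$, where $\mathcal{F} = Z_G(W)(\uQQ) = i_* \uQQ[-3]^{\oplus 2} \oplus \mathcal{V}$ by \cref{prop:stacky_commutator_map}, by applying $\pi_!$ and $\pi_*$ respectively (analogously to how $Z_G(\bdcounit)$ decomposes as $\pi_! i^*$, only with the target point $1$ replaced by $-1$).

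First I would expand $\mathcal{F}^{*g}$ binomially, using that $i_* \uQQ$ is the unit for the convolution $*$:
\[ \mathcal{F}^{*g} \;=\; \bigoplus_{k = 0}^{g} \binom{g}{k} \, 2^{g-k} \, \mathcal{V}_k[-3(g-k)] , \]
with the convention $\mathcal{V}_0 = i_* \uQQ$. Applying $i_{-1}^*$ kills the $k = 0$ summand, because the closed immersions $\{1\}/G$ and $\{-1\}/G$ into $G/G$ have disjoint images and hence $i_{-1}^* i_* \uQQ = 0$. For the surviving summands with $k \in \{1, 2, 3\}$, the preceding proposition expresses each $i_{-1}^* \mathcal{V}_k$ as an explicit direct sum of shifts of the objects $\mathcal{G}_n \in D(BG)$ of \cref{def:mathcal_G}.

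The last step applies $\pi_!$ (respectively $\pi_*$) to the resulting direct sum, using the identities $\pi_! \mathcal{G}_n = \pi_* \mathcal{G}_n = \bigoplus_{\ell=0}^{n-1} \QQ[-4\ell]$ from \cref{prop:properties_mathcal_G}(ii). Since $\pi_!$ and $\pi_*$ agree on every $\mathcal{G}_n$, they agree on every shifted direct sum of $\mathcal{G}_n$'s, so the coincidence of cohomology with and without compact support for the twisted character stacks is automatic, and both invariants are computed by the same expression.

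There is no conceptual obstacle, only careful bookkeeping: one must combine the binomial multiplicities appearing in the expansion of $\mathcal{F}^{*g}$, the explicit cones in each $i_{-1}^* \mathcal{V}_k$, and the graded pieces of each $\pi_! \mathcal{G}_n$, then collect the result in each cohomological degree. In this sense the theorem is a direct corollary of the preceding proposition combined with the TQFT decomposition of \eqref{eq:ZG_Sigma_g_from_convolution_mathcal_F} transported to the twisted setting via $i_{-1}$ instead of $i$.
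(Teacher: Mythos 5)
Your approach is correct and is exactly what the paper does: the paper's proof consists of the single sentence "Analogous to [the untwisted theorem], we obtain the following," and you have filled in precisely the intended details — the binomial expansion of $\mathcal{F}^{*g}$ using $\mathcal{F} = i_* \uQQ[-3]^{\oplus 2} \oplus \mathcal{V}$ from \cref{prop:stacky_commutator_map}, the vanishing $i_{-1}^* i_* \uQQ = 0$, the use of the immediately preceding proposition for $i_{-1}^*\mathcal{V}_k$, and the observation that $\pi_!$ and $\pi_*$ coincide on each $\mathcal{G}_n$ (combined with the properness of $\CharStckTw{G}{\Sigma_g}{-1} \to BG$), which gives the agreement of $H^*$ and $H^*_c$.

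One caution for when you carry out the "careful bookkeeping": the preceding proposition as printed in the paper contains a typo in part (iii). It states $i_{-1}^* \mathcal{V}_3 = \mathcal{G}_1[-6] \oplus \mathcal{G}_3 \oplus \mathcal{G}_3[-2] \oplus \mathcal{G}_3[-4]$, but inspecting its own proof — the matrix $\psi$ has two zero rows corresponding to target summands $q_*\uQQ[-6]^{\oplus 2}$, and the additional $\Cone(\mathcal{G}_2[-3] \xrightarrow{1} q_*\uQQ[-3])$ also equals $\mathcal{G}_1[-6]$ — the correct statement is $i_{-1}^* \mathcal{V}_3 = \mathcal{G}_1[-6]^{\oplus 3} \oplus \mathcal{G}_3 \oplus \mathcal{G}_3[-2] \oplus \mathcal{G}_3[-4]$. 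Plugging the printed version into your binomial expansion for $g=3$, namely $\mathcal{F}^{*3} = i_*\uQQ[-9]^{\oplus 8} \oplus \mathcal{V}_1[-6]^{\oplus 12} \oplus \mathcal{V}_2[-3]^{\oplus 6} \oplus \mathcal{V}_3$, would give $\QQ[-6]^{\oplus 14}$ rather than the theorem's $\QQ[-6]^{\oplus 16}$; with the corrected multiplicity everything matches. If you hit this discrepancy, recognize it as an error in the proposition's statement, not in the theorem or in your method.
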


\begin{remark}
    Unfortunately, we have not found a systematic way to practically compute the cohomology of the (twisted) $\SU(2)$-character stacks for $g \ge 4$.
\end{remark}

\printbibliography

\end{document}